\documentclass[11pt,a4paper,reqno]{amsart}

\usepackage{amsmath,amssymb,amsfonts}

\usepackage[colorlinks=true]{hyperref}


\newtheorem{theorem}{Theorem}[section]

\newtheorem{lemma}[theorem]{Lemma}

\theoremstyle{definition}

\newtheorem{remark}{Remark}

\newtheorem{example}[theorem]{Example}

\usepackage{amsthm}

\DeclareMathOperator{\dt}{dt}
\newcommand{\R}{\mathbb{R}}
\newcommand{\N}{\mathbb{N}}
\newcommand{\Z}{\mathbb{Z}}

\newcommand{\F}{\mathcal F}

\newcommand{\lw}{(\ell,\w)}

   \DeclareMathOperator{\Id}{Id}
   \DeclareMathOperator{\e}{e}

\usepackage{dsfont,mathrsfs}
      \newcommand{\T}{\mathbb{Z}}
      \newcommand{\cB}{\mathcal{B}}
    \newcommand{\cP}{\mathcal{P}}
   
\newcommand{\prts}[1]{\left(#1\right)}

\newcommand{\x}{\times}


\newcommand{\Sgm}{\Sigma}

\newcommand{\w}{\omega}
\newcommand{\W}{\Omega}
\newcommand{\TW}{\tilde{\W}}
\newcommand{\WW}{\W^+}
\newcommand{\TWW}{\tilde{\W}^+}

\usepackage{enumitem}
   \newcommand{\lb}{label}
   \newcommand{\lm}{leftmargin}



\title[Admissibility and generalized nonuniform dichotomies for NRDS]
{Admissibility and generalized nonuniform dichotomies for nonautonomous Random Dynamical Systems}

\author[Davor Dragi\v{c}evi\'{c}, C\'{e}sar M. Silva, Helder Vilarinho]{Davor Dragi\v{c}evi\'{c}$^1$, C\'{e}sar M. Silva$^2$ and Helder Vilarinho$^{2,*}$}

\address{$^1$ Faculty of Mathematics\\
University of Rijeka\\
Radmile Matej\v{c}i\'{c} 2, 51000 Rijeka\\
Croatia}
\email{ddragicevic@math.uniri.hr}

\address{$^2$ Centro de Matem\'atica e Aplica\c{c}\~oes\\
Universidade da Beira Interior\\
6201-001 Covilh\~a\\
Portugal}
\email{csilva@ubi.pt}
\email{helder@ubi.pt}
\urladdr{www.helder.ubi.pt}

\subjclass[2020]{Primary: 37H05, 37D25, 34D09}

\keywords{Admissibility; nonautonomous random dynamical systems;  $\mu$-dichotomies; robustness.}

\thanks{$^*$ Corresponding author.}

\begin{document}
	\maketitle

	\begin{abstract}
		
		In this paper, we
  introduce generalized dichotomies for non\-au\-tonomous random linear dynamical systems acting on arbitrary Banach spaces, and obtain their complete characterization  in terms of an appropriate admissibility property. These generalized dichotomies are associated to growth rates satisfying  mild conditions and they include the standard exponential behaviour as a very particular case. As a nontrivial application, we establish the robustness property of such dichotomies under small (linear) perturbations.
  
		
	\end{abstract}
	
 \section{Introduction}

 The study of admissibility and its relation with asymptotic properties of dynamical systems has a long history that goes back to the pioneering works of Perron~\cite{Per} and Li~\cite{Li} for continuous and discrete time, respectively. Subsequently, a systematic approach was initiated in the landmark works of Massera and Sch\"affer~\cite{MS1, MS2} in the case of continuous time, and by Coffman and Sch\"affer~\cite{CS} in the case of discrete time.

 Focusing for convenience on the case of discrete time, let us consider a linear nonautonomous difference equation 
 \begin{equation}\label{lde}
 x_{n+1}=A_n x_n \quad n\in J,
 \end{equation}
 on an arbitrary Banach space $X$, where $(A_n)_{n\in J}$ is a sequence of bounded operators on $X$ and $J\in \{\Z, \Z^+\}$. Let $\mathcal Y_1$ and $\mathcal Y_2$ be Banach spaces whose elements are  sequences $(x_n)_{n\in J}\subset X$. We say that the pair $(\mathcal Y_1, \mathcal Y_2)$ is \emph{admissible} for~\eqref{lde} if for each sequence $(y_n)_{n\in J}\in \mathcal Y_2$, there exists (a unique) $(x_n)_{n\in J}\in \mathcal Y_1$ such that 
 \[
 x_{n+1}=A_n x_n+y_n, \quad n\in J.
 \]
 In the above mentioned works, Li (resp. Coffman and Sch\"affer) obtained characterization of exponential stability (resp. exponential dichotomy) of~\eqref{lde} in terms of admissibility with respect to suitable pairs of spaces. The works of Perron and Massera and Sch\"affer considered a continuous time counterpart of~\eqref{lde}, i.e. a linear ordinary differential equation of the form $x'=A(t)x$.

 Subsequent important contributions (either for continuous or discrete time) are due to Coppel~\cite{Co},   Dalec$'$ki\u{\i} and Kre\v\i n~\cite{Dal}, as well as Henry~\cite{Hen}. For a small fraction of the more recent results we refer to~\cite{Huy,LRS, MSS2, Mi.Ra.Sc.1998,P.P.C.JFA.2010,S,SS1} and references therein. Finally, we refer to~\cite{B.D.V.AH.2018} for  a detailed survey.

 We stress that all the above mentioned works dealt with characterizations of \emph{uniform} exponential behavior in terms of admissibility. The case of nonuniform exponential dichotomies was first considered by Preda and Megan~\cite{PM}. For these dichotomies, the rates of contraction (resp. expansion) along stable (resp. unstable) directions depend on the initial time. 
 For subsequent contributions to the study of the relationship between admissibility and nonuniform exponential dichotomies we refer to~\cite{BDV0,BDV1,BDV2, Dragicevic_Zhang_Zhou_2024,MSS1,SBS,SS, WX,ZLZ,ZZ} and references therein.

 On the other hand, in the context of nonautonomous dynamics it is natural to study asymptotic behaviors which are not necessarily of exponential type. To the best of our knowledge, the works of  Muldowney~\cite{M} and Naulin and Pinto~\cite{NP} were the first to consider dichotomies in which the rates of contraction (resp. expansion) along the stable (resp. unstable) directions are not necessarily of exponential type. More recently, a systematic study of such dichotomies was initiated by Barreira and Valls~\cite{Ba.Va.2008-2} who showed that such behaviors  arise naturally  in the situations when certain generalized Lyapunov exponents are nonzero.
 
 In this direction, the first author obtained a complete characterization of nonuniform \emph{polynomial} dichotomies in terms of admissibility in both discrete and continuous time setting (see~\cite{D.MN.2019,D.2020.CPAA}). The second author~\cite{Silva_2021} has extended the results from~\cite{D.MN.2019} and obtained analogous results dealing with arbitrary growth rates satisfying a certain ``slow growth condition''. These results rely on the use of the so-called Lyapunov norms which transform nonuniform behavior into a uniform one. More recently, in~\cite{BD} an alternative approach was proposed which does not rely on the use of Lyapunov norms but in order to characterize dichotomic behavior one needs several admissibility conditions.

For linear cocycles (or linear skew-product semiflows) over topological dynamical systems (consisting of a compact metric space $\Omega$ and a homeomorphism $\sigma\colon \Omega \to \Omega$) two different approaches for characterizing uniform hyperbolicity in terms of admissibility have been proposed.  The first approach initiated by Mather~\cite{Mather} considers admissible spaces formed by maps from $\Omega$ to a Banach space on which cocycle acts. Since his original work which dealt  with derivative cocycles associated to smooth diffeomorphisms on a compact Riemannian manifold, there has been a significant progress on extending his results to the case of arbitrary (i.e. not necessarily derivative) linear cocycles acting on  arbitrary Banach spaces. We refer to~\cite{CL} for a detailed overview of this line of research.
The second  approach was proposed by Chow and Leiva in~\cite{Chow_Leiva_1995}. In their discrete-time setup admissible spaces consist of two-sided sequences in a Banach space on which cocycle acts, and the admissibility condition is posed on each trajectory of the base space $(\Omega, \sigma)$ separately. Related results dealing with characterizations of uniform hyperbolicity using the Chow-Leiva approach have been obtained in~\cite{Huy2,MSS3,SS-1,SS2}. For more recent results which combine ergodic theory tools with these techniques we refer to~\cite{DSS1,DSS2}.

We now focus on the case of linear cocycles over measure-preserving systems, which allow for the incorporation of random elements. As previously mentioned, significant progress has been made in understanding admissibility and robustness in deterministic dynamical systems, offering comprehensive insights into stability properties, whether in terms of exponential rates or other general types of dichotomies. However, the introduction of randomness into system dynamics presents new challenges. The \emph{Random Dynamical Systems (RDS)} framework extends the classical approach by incorporating stochastic elements that influence the evolution of the system, necessitating a re-evaluation of core stability concepts, such as dichotomies, admissibility, and robustness.

The foundational framework for RDS became established with Arnold’s book~\cite{Arnold_1998}. In this context, particularly in finite-dimensional phase spaces, the existence of  invariant manifolds with exponential asymptotic estimates, measured in terms of non-zero Lyapunov exponents, was discussed for RDS satisfying the integrability condition required by the Multiplicative Ergodic Theorem (MET). 
The issues of admissibility and robustness for RDS with dichotomies characterized by exponential rates, in different formulations, have been examined in several works. 

Tempered exponential dichotomies arise in the context of RDS, generalizing the concept of exponential dichotomies, where growth and decay rates in stable and unstable subspaces are affected by a (tempered) random variable. In the context when the assumptions of MET (or one of its infinite-dimensional versions) are fulfilled, the existence of a tempered exponential dichotomy for a linear RDS corresponds to nonvanishing of Lyapunov exponents~\cite{Arnold_1998,Lian_Lu_2010}.

A Perron-type result for perturbations of linear RDS exhibiting tempered exponential dichotomies was given by Barreira, Dragi\v cevi\' c, and Valls, in~\cite{Barreira_Dragicevic_Valls_2016a}, and admissibility and robustness results were subsequently provided by the same authors in~\cite{Barreira_Dragicevic_Valls_2016}. Further robustness results for tempered dichotomies are discussed in Cong's book~\cite{Cong_1997}, addressing finite-dimensional RDS, and 
in the work of Zhou, Lu, and Zhang~\cite{Zhou_Lu_Zhang_2013} for discrete-time linear RDS on Banach spaces.

We stress that in~\cite{Barreira_Dragicevic_Valls_2016a} the admissibility characterization of tempered exponential dichotomies relied on the use of Lyapunov norms. In addition, Abu Alhawala and Dragi\v cevi\' c~\cite{MD1, MD2}, as well as Dragi\v cevi\' c, Zhang and Zhou~\cite{Dragicevic_Zhang_Zhou_2023, Dragicevic_Zhang_Zhou_2024} explored the relationship between admissibility and tempered dichotomies in the case when admissible pairs are not built using Lyapunov norms. Results in the same spirit but relying on Mather's approach were obtained in~\cite{Chemnitz_Dragicevic_2024}.


With a different point of view to the exponential control along invariant subspaces, Barreira and Valls~\cite{Barreira_Valls_2014} discussed the robustness of exponential dichotomies in mean, examining how these stability structures persist under sufficiently small linear perturbations. Later, Barreira, Dragi\v cevi\' c, and Valls~\cite{Barreira_Dragicevic_Valls_2015} provided a characterization of exponential dichotomies in average in terms of admissibility in the discrete-time case, with a continuous-time counterpart discussed in~\cite{Barreira_Dragicevic_Valls_2016_flow} by same authors.

Despite the extensive study of exponential dichotomies and their variations within the RDS context, many open questions remain regarding more general forms of dichotomies. Bento and Vilarinho~\cite{Bento_Vilarinho_2021} explored the existence of invariant manifolds for RDS on Banach spaces that exhibit generalized dichotomies, both for continuous and discrete-time systems, in a formulation that includes all forms of exponential dichotomies for RDS while also addressing weaker growth rates strictly beyond the exponential case. 

The extension of questions related to admissibility, robustness, stability, and invariant subspaces, among others, to systems exhibiting general dichotomies such as $\mu$-dichotomies or $(\mu, \nu)$-dichotomies, while incorporating randomness, adds additional layers of complexity. Specifically, the system's evolution law integrates both randomness and nonautonomy, where the time-dependence of the growth rates introduces a form of nonautonomy that is not fully captured by the classical RDS formalism. To address these complexities, we consider Nonautonomous Random Dynamical Systems (NRDS) to investigate admissibility and robustness in systems evolving in Banach spaces, incorporating both randomness and time-varying nonautonomy.

In this direction, Oliveira-Sousa in~\cite{Sousa_Tese_2023}, and together with Caraballo, Carvalho and Langa in~\cite{Caraballo_Carvalho_Langa_Oliveira-Sousa_2021b},
 examined the robustness of hyperbolicity in autonomous differential equations (deterministic) under nonautonomous random/stochastic perturbations. As an application, they established the existence and continuity of random hyperbolic solutions, which laid the groundwork for their subsequent work on the existence and continuity of unstable and stable sets, leading to lower semicontinuity and topological structural stability of attractors~\cite{Caraballo_Carvalho_Langa_Oliveira-Sousa_2022b,Sousa_Tese_2023}.
As in the autonomous and nonautonomous deterministic situations, admissibility and robustness remain key for understanding the existence and stability of attractors in NRDS. For this later issue see \cite{Bates_Lu_Wang_2014, Caraballo_Langa_2003, Cherubini_Lamb_Rasmussen_Sato_2017,
Crauel_Debussche_Flandoli_1997, Crauel_Kloeden_Yang_2011, Cui_Kloeden_2018, 
Li_Li_Zuo_2023, 
Wang_2012_JDE,  
Wang_2014_SD, 
Wang_2014_NA,  
Yao_Zhang_2020, Zhang_Caraballo_Yang_2024}.

In this paper, we develop a framework for the admissibility  of NRDS that exhibit generalized nonuniform dichotomies with respect to random norms. Building on the  work of Silva \cite{Silva_2021}, we extend the concept of admissibility to this setting of discrete-time NRDS evolving in a Banach space. Our setting allows for the study of systems with nonuniform growth rates, offering a more flexible approach in a setting with random perturbations than traditional dichotomies. In particular, it includes as particular cases the deterministic exponential and $\mu$-dichotomies as well exponential dichotomies for RDS.
We address the admissibility problem for random nonuniform $\mu$-dichotomies, the robustness of these dichotomies, and their relationship with strong random nonuniform $(\mu,\nu)$-dichotomies.

The paper is structured as follows: In Section~\ref{se:NRDS and mu-dichotomies}, we introduce the necessary background on NRDS and \(\mu\)-dichotomies. Section~\ref{se:admissibility wrt random norm} is dedicated to developing the theory of admissibility with respect to random norms, presenting the main theorems and proofs. Specifically, in Theorem~\ref{T51}, we prove that an NRDS admitting a $\mu$-dichotomy with respect to a random norm exhibits admissibility. Conversely, in Theorem~\ref{T52}, we show that an NRDS with admissibility and appropriate control on the growth of orbits with respect to a random norm possesses a $\mu$-dichotomy. Section~\ref{se:robustness} addresses the robustness of $\mu$-dichotomies for NRDS. In Section~\ref{se:mu_vs_munu}, we discuss the relationship between $\mu$-dichotomies with respect to a random norm and random nonuniform $(\mu,\nu)$-dichotomies. These dichotomies incorporate an additional nonautonomous factor that influences the system's behavior over time, and they are defined with respect to the space norm.

        \section{NRDS and $\mu$-dichotomies}\label{se:NRDS and mu-dichotomies}
Consider a measure space $(\W, \F, \mu)$ and a \emph{measure-preserving dynamical system} $(\W, \Sgm, \mu, \theta)$,
	in the sense that:
	\begin{enumerate}[\lb=$\roman{*})$, \lm=10mm]
		\item $\theta \colon \Z \times \W \to \W$ is $\prts{\cP(\T) \otimes \F ,\F}$-measurable, where $\cP(\T)$ denotes the power set of $\T$; 
		\item $\theta^{n} \colon \W \to \W$ given by $\theta^{n}\w = \theta(n,\w)$ preserves
			the measure $\mu$ for all $n \in \T$;

		\item $\theta^{0} = \Id_{\W}$ and $\theta^{n+m}=\theta^{n}\circ\theta^{m}$ for
			all $n,m \in\T$.
	\end{enumerate}
	If $\mu$ is a probability measure, then $(\W, \F, \mu, \theta)$ is called a
	\emph{metric dynamical system}.
        We set $\WW=\mathbb{Z}^+ \times \W$. Let $\Theta\colon\WW\to\WW$ be given by  \[\Theta(\ell,\w)=(\ell+1,\theta\omega) \quad  (\ell,\w)\in\WW,\] 
        where for the sake of simplicity throughout the paper  we denote $\theta^1$ by $\theta$.
        A measurable \emph{nonautonomous random dynamical
	system} (NRDS) on a Banach space $X=(X, \|\cdot \|)$ over a metric dynamical system $(\W, \F, \mu,     \theta)$ with time $\Z^{+}=\Z\cap[0,+\infty)$, is a map
	\begin{equation*}
		\Phi:\T^{+}\times\T^+\times\W\times X \to X
	\end{equation*}
	such that
	\begin{enumerate}[\lb=$\roman{*})$, \lm=10mm]
        \item $(n,\w) \mapsto \Phi(n,\ell,\w,v)$ is $\prts{\cP(\T^+) \otimes \W,\cB(X)}$-measurable for every $\ell\in\T^+$ and $v \in X$, where $\cB(X)$ denotes the Borel $\sigma$-algebra on $X$;
        	
		\item $\Phi(n,\ell,\w) \colon X \to X$ given by $\Phi(n,\ell,\w) v = \Phi(n,\ell,\w,v)$
			forms a cocycle over~$\Theta$, i.e.,
			\begin{enumerate}[\lb=$\alph{*})$, \lm=6mm]
				\item $\Phi(0,\ell,\w)=\Id_{X}$ for all $\lw\in\WW$;
				\item for all
					$m,n \in\T^{+}$ and $\lw\in\WW$, $$\Phi(m+n,\ell,\w)=\Phi(m,\ell+n,\theta^n\w)\circ\Phi(n,\ell,\w).$$
			\end{enumerate}
	\end{enumerate}
 In the case when it is clear which driving system $(\W, \F, \mu, \theta)$ we consider, we will identify the pair $(\Phi,\theta)$ with $\Phi$.
Let $\mathcal L(X)$
be the Banach algebra of all bounded linear operators acting on $X$ equipped with the operator norm (which we will also denote by $\| \cdot \|$). A measurable NRDS $\Phi$ is called \emph{linear} if $\Phi(n,\ell,\w)\in \mathcal L(X)$ for all $n,\ell\in\Z^+$ and $\w\in\W$.

\begin{remark}
    In the current work, we focus on the future orbits for the discrete-time case, but the definition can be easily extended to the invertible and/or continuous-time case. Furthermore, we note that, although we assume the driving system $\theta$ to be invertible, we do not require that $\Phi(n,\ell,\w)$ is invertible.
    \end{remark}

\medskip

	Set $\R^+=[0,+\infty)$. A map $\mu\colon\Z^+\to\R^{+}$
	is said to be  a \emph{growth rate}  if
	\begin{enumerate}[\lb=$\roman{*})$, \lm=10mm]
	\item  $n\mapsto \mu_n:=\mu(n)$ is strictly increasing;
	\item	$\lim \mu_n=+\infty$;
	\item exists $\eta \ge 1$ such that $\mu_{n+1}\leq \eta \mu_n$, for $n\in \Z^+$.
	\end{enumerate}	
 Given a growth rate $\mu$, for each  $n\in\Z^+$ we set $\mu_n':
	=\mu_{n+1}-\mu_n$ and
	$\varphi_n :=\frac{\mu_n}{\mu_n'}.$ 

\medskip 
 We say that a measurable function $\mathcal{N}\colon \WW\x X\to\R^{+}$ is a \emph{random norm} if, for all $\lw\in\WW$, the function $\|\cdot\|_{\lw}:=\mathcal{N} (\lw,\cdot)$ is a norm on $X$ that is equivalent to the norm $\|\cdot\|$ of the space $X$ and, additionally, for each $v\in X$, the map $\lw \mapsto \|v\|_{\lw}$ is measurable.

 \medskip
For a subset $\TW \subset \W$, denote $\TWW = \Z^+ \times \TW$. Given a growth rate $\mu$ and a random norm $\mathcal{N}$, we say that a linear NRDS $\Phi$ admits a \textit{$\mu$-dichotomy with respect to $\mathcal{N}$} if there exists a $\theta$-invariant subset $\tilde{\Omega}$ of $\Omega$ with full $\mu$-measure, and a family of projections 
$$\textstyle P:=\left\{P_{(\ell,\omega)}: \lw \in \TWW\right\},$$
such that:

\begin{enumerate}[\lb=$P_{\arabic{*}})$, \lm=10mm]
\item\label{def:mu:dich:measu:proj} for each $\ell \in \T^+$ the map $P_{\ell}: \tilde\Omega \rightarrow \mathcal L(X)$, given by $P_\ell(\w) :=P_{\lw}$, is strongly measurable, that is, for every $v\in X$,
\[
\tilde\W\ni\w\mapsto P_{\lw} v
\]
is measurable;
\item for every $n \in \T^{+}$ and $\lw \in \TWW$ we have 
$$P_{\Theta^n \lw}^{}\Phi(n,\ell,\w)=\Phi(n,\ell,\w) P_{\lw}^{};$$
 \item  the map $\Phi(n,\ell,\w)\vert_{\ker P_{\lw}}\colon \ker P_{\lw} \to \ker P_{\Theta^n\lw}$
			is  invertible for all $n \in \T^{+}$ and all $(\ell,\w)\in\TWW$;
 \item  for all $n\in\T^+$ and $v\in X$  the map
			\begin{equation*}
				(\ell,\w) \mapsto \Phi(-n,\ell+n,\theta^n\w)Q_{\Theta^n(\ell,\w)}^{}v
			\end{equation*}
			is $\prts{\cB(\T^+) \otimes \F,\cB(X)}$-measurable, where  $Q_{\lw}= \Id_X-P_{\lw}$ and
   $\Phi(-n,\ell+n,\theta^n\w)$ stands for the inverse of
	\[
		\Phi(n,\ell,\w)\vert_{\ker P_{\lw}^{}}\colon \ker P_{\lw} \to \ker P_{\Theta^n(\ell,\w)};
	\]

\item \label{def:mu:dich}
 there exist a forward $\Theta$-invariant random variable $\lambda\colon \TWW\rightarrow (0,+\infty)$ (i.e., $\lambda(0,\w)=\lambda(\ell,\theta^\ell\w)$ for all $\lw \in \TWW$), and a $\Theta$-forward invariant
  random variable $D\colon\TWW\to[1,+\infty)$ such that, for all $\ell, n\in \Z^+$, $\w\in\TW$, 
 and $v\in X$, we have
	\begin{equation}
		\label{eq:dich-1}\|\Phi(n,\ell,\w) P_{\lw} v\|_{\Theta^n(\ell,\w)}\le 	
        D\lw\left(\frac{\mu_{\ell+n}}{\mu_\ell}\right)^{-\lambda\lw}\|v\|_{\lw}
	\end{equation}
	and
	\begin{equation}
		\label{eq:dich-2}\|\Phi(-n,\ell+n,\theta^n\w)Q_{\Theta^n(\ell,\w)}v\|_{\lw}\le 
		D\lw\left(\frac{\mu_{\ell+n}}{\mu_\ell}\right)^{-\lambda\lw}\|v\|_{\Theta^n(\ell,\w)}. 
  	\end{equation}
\end{enumerate}

\begin{remark}	Notice that~\eqref{eq:dich-2} is equivalent to say that for all
$\lw\in\TWW$, $0\leq n\leq \ell$, and $v\in X$, we have
\begin{equation}
		\label{eq:dich-2-b}
		\|\Phi(-n,\ell,\w)Q_{\lw}v\|_{(\ell-n,\theta^{-n}{\w})}\le  D\lw\left(\frac{\mu_\ell}{\mu_{\ell-n}}\right)^{-\lambda \lw}\|v\|_{\lw}. 
	\end{equation}
\end{remark}

\begin{example}\label{ex:exp}
  We say that a linear NRDS $\Phi$ admits an \textit{exponential dichotomy} with respect to a random $\mathcal N$ if it admits a $\mu$-dichotomy with $\mu_n=\e^ {n}$, converting item~\ref{def:mu:dich} above to:  
$$
 \left\|\Phi(n,\ell,\w) P_{\lw}^{}v\right\|_{\Theta^n\lw} \leq D\lw \e^ {-\lambda\left(\ell,\w\right)n}\|v\|_{\lw} 
 $$
 and
\begin{equation*}
\left\|\Phi(-n,\ell+n,\theta^n\w)Q_{\Theta^n \lw}^{}v\right\|_{\lw}\leq D\lw \e^ {-\lambda\lw n}\|v\|_{\Theta^n\lw}.
\end{equation*}
\end{example}

\begin{example}
If we consider a $\mu$-dichotomy with $\mu_n=n+1$ we are lead to a polynomial type dichotomy. Conditions in~\ref{def:mu:dich} become now:
\begin{equation*}
\|\Phi(n,\ell,\w) P_{\lw} v\|_{\Theta^n{\lw}}\le  D\lw\left(\frac{\ell+n+1}{\ell+1}\right)^{-\lambda\lw}\|v\|_{{\lw}}
	\end{equation*}
	and
	\begin{equation*}
  \|\Phi(-n, \ell+n, \theta^n \omega)
Q_{\Theta^n{\lw}}v\|_{{\lw}}\le D\lw
		 \left(\frac{\ell+n+1}{\ell+1}\right)^{-\lambda\lw}\|v\|_{\Theta^n{\lw}}.
	\end{equation*}
\end{example}

\begin{example}
By considering a $\mu$-dichotomy with $\mu_n = \log(n+1)$, we arrive at a logarithmic type dichotomy. Conditions~\eqref{eq:dich-1} and~\eqref{eq:dich-2} read now as
\begin{equation*}
	\|\Phi(n,\ell,\w) P_{\lw} v\|_{\Theta^n{\lw}} \le D\lw\left(\frac{\log(\ell+n+1)}{\log (\ell+1)}\right)^{-\lambda\lw}\|v\|_{{\lw}}
	\end{equation*}
	and
	\begin{equation*}
		\|\Phi(-n, \ell+n, \theta^n \omega)Q_{\Theta^n{\lw}}v\|_{{\lw}}\le D\lw
		 \left(\frac{\log(\ell+n+1)}{\log (\ell+1)}\right)^{-\lambda\lw}\|v\|_{\Theta^n{\lw}}.
	\end{equation*}
\end{example}

\begin{example}\label{ex:exist:dich}
Let us now provide a simple NRDS that exhibits a $\mu$-dichotomy with a prescribed growth rate $\mu$. Let $X$ be a Banach space with norm $\|\cdot\|$, $(\W,\mathcal F,\mu,\theta)$ a metric dynamical system, $\WW=\Z^+\times\W$ and $\Theta\colon\WW\to\WW$ to be given by $\Theta(\ell,\w)=(\ell+1,\theta\w)$. Consider a random variable $K\colon\WW\to [1,+\infty)$ and define a random norm $\mathcal N$ by setting $$\|\cdot\|_{\lw}=K\lw\|\cdot\|.$$ Now, consider a 
an arbitrary projection $P$ on $X$.
Additionally, let $\lambda\colon \T^+\x\Omega\rightarrow\R$ and $D\colon\Z^+\times\W\to (0,\infty)$ be $\Theta$-forward random variables.  
We set
\[
\begin{split}
\Phi^s(n,\ell,\w)=
D\lw \frac{K\lw}{K(\ell+n,\theta^n\w)}\left(\frac{\mu_{\ell+n}}{\mu_\ell}\right)^{-\lambda\lw}P
\end{split}
\]
and
\[
\begin{split}
\Phi^u(n,\ell,\w)=
    \frac{1}{D\lw}\frac{ K\lw}{K(\ell+n,\theta^n\w)}\left(\frac{\mu_{\ell+n}}{\mu_\ell}\right)^{\lambda\lw}Q,
\end{split}
\]
where $Q:=\Id_X-P$.
For all $(n,\ell,\w)\in\Z^+\times\Z^+\times\W$, we define
\[
\Phi(n,\ell,\w)=\Phi^s(n,\ell,\w)+\Phi^u(n,\ell,\w).
\]
For all
$\lw\in\WW$, $n\in \Z^+$ and $v\in X$, we have that 
\[
\begin{split}
\|\Phi(n,\ell,\w)Pv\|_{\Theta^n(\ell,\w)}&= D\lw K\lw\left(\frac{\mu_{\ell+n}}{\mu_\ell}\right)^{-\lambda\lw}\left\|Pv\right\|\\
&
\leq D\lw \|P\| \left(\frac{\mu_{\ell+n}}{\mu_\ell}\right)^{-\lambda\lw}\left\|v\right\|_{\lw}.
\end{split}
\]
Similarly, since
\[
\Phi(-n,\ell+n,\theta^n\w)Q =  D\lw\frac{K(\ell+n,\theta^n\w)}{ K\lw}\left(\frac{\mu_{\ell+n}}{\mu_\ell}\right)^{-\lambda\lw}Q,
\]
we  have
\[
\begin{split}
&\|\Phi(-n,\ell+n,\theta^n\w)Qv\|_{\lw}\\
&= D\lw K(\ell+n,\theta^n\w)\left(\frac{\mu_{\ell+n}}{\mu_\ell}\right)^{-\lambda\lw}\left\|Qv\right\|\\
&\leq D\lw \|Q\| \left(\frac{\mu_{\ell+n}}{\mu_\ell}\right)^{-\lambda\lw}\left\|v\right\|_{\Theta^n(\ell,\w)}.
\end{split}
\]
\end{example}

\begin{remark} 
Note that if we disregard the dependence on $\w$ in the definition of a $\mu$-dichotomy, we arrive at the deterministic scenario of $\mu$-dichotomies, such as those in~\cite{Silva_2021}. On the other hand, in Example~\ref{ex:exp}, if we retain the dependence on $\w$ but remove the dependence on $\ell$, we are led to nonuniform exponential dichotomies in the framework of RDS, or tempered nonuniform exponential dichotomies if we assume $D(\cdot)=D(\ell,\cdot)$ to be a tempered random variable. However, the $\mu$-dichotomies that we are considering for general NRDS require simultaneously both randomness and nonautonomy in a manner that is not achievable in the RDS setting.
\end{remark}

	\section{Admissibility with respect to a random norm}\label{se:admissibility wrt random norm}

A measurable NRDS $\Phi$ can be associated with an nonautonomous random difference equations in the following way. To start, for each $\w \in \W$ consider the random difference equation
\begin{equation}\label{eq:diff:eq}
x_{n+1}=\Phi(1,n,\theta^n\w)x_n,
\end{equation}
with $x_n\in X$, $n\in\Z^+$. The solutions $x=(x_n)$ of ~\eqref{eq:diff:eq} are the orbits $(\Phi(n,0,\w)x_0)_{n\in\Z^+}$. Now, considering some growth rate $\mu$, given $\w\in\W$ and $y=(y_n)$, with $y_0=0$, we are interested in finding  $x=(x_n)_{n\in \Z^+}\subset X$  such that  
\begin{equation*}
x_{n+1}=\Phi(1,n,\theta^{n}\w) x_n+\varphi_{n+1}^{-1}y_{n+1}, \quad n\in \Z^+.
\end{equation*}
Similarly, given a growth rate $\mu$, and a measurable map $y\colon\Z^+\x\tilde\W\x\Z^+\to X$ we look for measurable $x\colon\Z^+\x\tilde\W\x\Z^+\to X$ such that 
\begin{equation}\label{eq:ADM_0}
  x(\ell, \w, n+1)=\Phi(1, n+\ell, \theta^n \w)x(\ell, \w, n)+\varphi_{\ell+n+1}^{-1}y(\ell, \w, n+1),
  \end{equation} 
  for $(\ell, \w, n)\in \Z^+\times \tilde \Omega \times \Z^+$ where $\tilde \Omega \subset \Omega$ is a $\theta$-invariant full-measure set.
  

The concept of admissibility requires that for input $y$ there exists a (unique) output $x$ satisfying~\eqref{eq:ADM_0}, where $x$ and $y$ belong to suitable Banach spaces. 

\subsection{Technical lemma}
	Before stating our main results, we will begin with a technical lemma that
	will be very useful in some estimates. This result was stated in~\cite{Silva_2021} and the proof presented there was based on a result
	concerning the smooth global approximation of a Lipschitz function by differentiable
	functions, obtained in~\cite{Czarnecki.Rifford.TAMS.2006}.
	It turns out that this proof has a issue. Fortunately, this issue can be overcomed with a simpler proof. For the sake of completeness, we provide this proof here.

\begin{lemma}\label{prop:ESTIMATES}
Let $\mu$ be a growth rate. If $\alpha \in \ (0,1) \ \cup \ (1,+\infty)$, for each $r,s \in \Z^+$ with $r \ge s >1$, we have
\begin{equation}\label{prop:ESTIMATES-est-le-<0ne1}
 \frac{\mu_{r+1}^{1-\alpha}-\mu_s^{1-\alpha}}{1-\alpha} \le \sum_{k=s}^r \mu_k^{-\alpha}\mu_k' \le  \eta^\alpha\,\frac{\mu_{r+1}^{1-\alpha}-\mu_s^{1-\alpha}}{1-\alpha}
\end{equation}
and, for $\alpha=1$, we have
\begin{equation}\label{prop:ESTIMATES-est-=1}
 \log \frac{\mu_{r+1}}{\mu_{s}} \le \sum_{k=s}^{r} \, \frac{\mu_k'}{\mu_k} \le \eta\log \frac{\mu_{r+1}}{\mu_s}.
\end{equation}
\end{lemma}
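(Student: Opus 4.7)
The plan is to treat each term $\mu_k^{-\alpha}\mu_k'$ as a one-term Riemann sum for $\int_{\mu_k}^{\mu_{k+1}} t^{-\alpha}\,dt$ and exploit the monotonicity of $t\mapsto t^{-\alpha}$ together with the bounded-ratio condition $\mu_{k+1}\le \eta\mu_k$. Since $\mu_n>0$ for all $n$ (otherwise $\mu_{n+1}\le \eta\mu_n$ together with strict monotonicity would force a contradiction), the integrand is well-defined on each interval.

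For the lower bound with $\alpha>0$, the map $t\mapsto t^{-\alpha}$ is decreasing, so on $[\mu_k,\mu_{k+1}]$ one has $t^{-\alpha}\le \mu_k^{-\alpha}$, giving
\begin{equation*}
\int_{\mu_k}^{\mu_{k+1}} t^{-\alpha}\,dt\;\le\; \mu_k^{-\alpha}(\mu_{k+1}-\mu_k)=\mu_k^{-\alpha}\mu_k'.
\end{equation*}
Summing from $k=s$ to $r$ telescopes the left side into $\int_{\mu_s}^{\mu_{r+1}} t^{-\alpha}\,dt$, which produces the left inequalities in both \eqref{prop:ESTIMATES-est-le-<0ne1} and \eqref{prop:ESTIMATES-est-=1} after computing the primitive.

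For the upper bound, I would use $t^{-\alpha}\ge \mu_{k+1}^{-\alpha}$ on the same interval, now invoking the growth-rate hypothesis $\mu_{k+1}\le \eta\mu_k$ in the form $\mu_{k+1}^{-\alpha}\ge \eta^{-\alpha}\mu_k^{-\alpha}$. This yields
\begin{equation*}
\int_{\mu_k}^{\mu_{k+1}} t^{-\alpha}\,dt\;\ge\; \eta^{-\alpha}\mu_k^{-\alpha}\mu_k',
\end{equation*}
hence $\mu_k^{-\alpha}\mu_k'\le \eta^\alpha \int_{\mu_k}^{\mu_{k+1}} t^{-\alpha}\,dt$. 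Summing and telescoping gives the right inequality; the case $\alpha=1$ is treated identically with the primitive $\log t$ instead of $t^{1-\alpha}/(1-\alpha)$.

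The one subtlety worth checking, which I do not anticipate to be a genuine obstacle, is the sign bookkeeping in \eqref{prop:ESTIMATES-est-le-<0ne1}: when $\alpha>1$, both $1-\alpha$ and $\mu_{r+1}^{1-\alpha}-\mu_s^{1-\alpha}$ are negative, so the quotient is positive and the inequalities are preserved; when $\alpha\in(0,1)$, numerator and denominator are both positive. In either regime the displayed bounds are positive quantities and the direction of the inequalities obtained from the integral comparison is the claimed one.
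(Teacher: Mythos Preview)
Your argument is correct and is essentially the same as the paper's: both compare $\mu_k^{-\alpha}\mu_k'$ to an integral via the monotonicity of $t\mapsto t^{-\alpha}$ and the bound $\mu_{k+1}\le\eta\mu_k$, then telescope. The only cosmetic difference is that the paper first builds a piecewise linear interpolant $f$ on $[s,r+1]$ with $f(k)=\mu_k$ and works with $\int_k^{k+1}[f(t)]^{-\alpha}f'(t)\,dt$, which after the substitution $u=f(t)$ is exactly your $\int_{\mu_k}^{\mu_{k+1}}u^{-\alpha}\,du$; your version is the more direct of the two.
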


\begin{proof}
	Let $r,s \in \N$ with $r \ge s >1$ and define
		\[
		f(t)=
		\begin{cases}
			\mu_s+(\mu_{s+1}-\mu_s)(t-s), & s\le t<s+1\\
			\mu_{s+1}+(\mu_{s+2}-\mu_{s-1})(t-s-1), & s\le t<s+1\\
			\cdots \ & \ \cdots\\
			\mu_s+(\mu_{s+1}-\mu_s)(t-r-1), & r-1\le t \le r.
		\end{cases}
		\]
		Since $f$ is increasing, $f^{-\alpha}$ is decreasing. Furthermore, $f$ is differentiable in each interval $]k,k+1[$, $k=s,\ldots,r-1$, and $f'(t)=\mu_{k+1}-\mu_k=\mu_k'$ for $t \in ]k,k+1[$, $k=s,\ldots,r-1$. Thus, since $\mu_{k+1}\le \eta\mu_k$, for all $k \in \N$, we get
		\begin{equation}\label{eq:lemma-tecnico}
			\begin{split}
				\eta^{-\alpha} \mu_k^{-\alpha}\mu_k'\le
				\mu_{k+1}^{-\alpha}\mu_k'\le 
				\int_k^{k+1} [f(t)]^{-\alpha} f'(t)\dt 
				\le \mu_k^{-\alpha}\mu_k'.
			\end{split}
		\end{equation}
		By~\eqref{eq:lemma-tecnico}, 
		\[
		\begin{split}
			\eta^{-\alpha} \sum_{k=s}^r \mu_k^{-\alpha}\mu_k'
			& \le \sum_{k=s}^r \int_k^{k+1} [f(t)]^{-\alpha} f'(t)\dt 
			\le \sum_{k=s}^r \left[\frac{[f(t)^{-\alpha+1}]}{-\alpha+1}\right]_k^{k+1} f'(t)\dt \\
			& =\sum_{k=s}^r \frac{\mu_{k+1}^{-\alpha+1}- \mu_k^{-\alpha+1}}{-\alpha+1}=\frac{\mu_{r+1}^{-\alpha+1}- \mu_s^{-\alpha+1}}{-\alpha+1}.
		\end{split}
		\]
		Thus
		\[
		\sum_{k=s}^r \mu_k^{-\alpha}\mu_k' \le \eta^\alpha \frac{\mu_{r+1}^{1-\alpha}- \mu_s^{1-\alpha}}{1-\alpha}.
		\]
		Again by~\eqref{eq:lemma-tecnico}, 
		\[
		\sum_{k=s}^r \mu_k^{-\alpha}\mu_k' \ge \sum_{k=s}^r \int_k^{k+1} [f(t)]^{-\alpha} f'(t)\dt=\frac{\mu_{r+1}^{-\alpha+1}- \mu_s^{-\alpha+1}}{1-\alpha}.
		\]
		and we obtain~\eqref{prop:ESTIMATES-est-le-<0ne1}.
	
	Letting $\alpha=1$ and taking into account $\mu_{k+1}\le \eta\mu_k$, for all $k \in \N$, we obtain
		\begin{equation}\label{eq:lemma-tecnico-2}
			\frac{\mu_k'}{\eta\mu_k} \le \frac{\mu_k'}{\mu_{k+1}}
			\le \int_k^{k+1} \frac{\mu_k'}{\mu_{k+1}}\dt
			\le \int_k^{k+1} \frac{f'(t)}{f(t)}\dt\le \int_k^{k+1} \frac{\mu_k'}{\mu_k}\dt
			=\frac{\mu_k'}{\mu_k}.
		\end{equation}
		By~\eqref{eq:lemma-tecnico-2}, we obtain
		\[
			\begin{split}
				\frac{1}{\eta} \sum_{k=s}^r \frac{\mu_k'}{\mu_k} 
				& \le \sum_{k=s}^r \int_k^{k+1} \frac{f'(t)}{f(t)}\dt = \sum_{k=s}^r 
				\left[ \ln f(t)  \right]_k^{k+1}\\ 
				& = \sum_{k=s}^r \left(\log\mu_{k+1}-\log\mu_k\right)=\log\frac{\mu_{r+1}}{\mu_s}.	
			\end{split}
		\]
		and thus
		\[
		\sum_{k=s}^r \frac{\mu_k'}{\mu_k}\le \eta\log\frac{\mu_{r+1}}{\mu_s}.	
		\]
		Again by~\eqref{eq:lemma-tecnico-2}, we obtain
		\[
			\sum_{k=s}^r \frac{\mu_k'}{\mu_k} \ge \sum_{k=s}^r \int_k^{k+1} \frac{f'(t)}{f(t)}\dt =\log\frac{\mu_{r+1}}{\mu_s},	
		\]
		and we get~\eqref{prop:ESTIMATES-est-=1}. The result follows.
		\end{proof}

\bigskip



\subsection{From dichotomies to admissibility and vice-versa}
For a full measure set $\tilde \Omega \subset \Omega$, let $\mathcal Y(\tilde \Omega)$ denote the space of all measurable maps $y\colon \Z^+ \times \tilde \Omega \times \Z^+ \to X$ such that 
\[
\|y\|_{\mathcal Y(\tilde \Omega)}:=\sup_{(\ell, \w, n)}\|y(\ell, \w, n)\|_{\Theta^n \lw}<+\infty.
\]
It is straightforward to verify that $(\mathcal Y(\tilde \Omega), \| \cdot \|_{\mathcal Y(\tilde \Omega)})$ is a Banach space. By $\mathcal Y^0(\tilde \Omega)$ we will denote the set of all $y\in \mathcal Y(\tilde \Omega)$ such that $y(\ell, \w, 0)=0$. Then, $\mathcal Y^0(\tilde \Omega)$ is a closed subspace of $\mathcal Y(\tilde \Omega)$.

Let $\Pi=(\Pi \lw)_{\lw \in \TWW}$ be a family of projections on $X$. Furthermore, let $C\colon \TWW\to (0, \infty)$ be a $\Theta$-forward invariant random variable. By $\mathcal Y_C^\Pi(\tilde \Omega)$ we will denote the space of all measurable $y\colon \Z^+\times\TW\times \Z^+\to X$ such that $y(\ell, \w, 0)\in \ker \Pi \lw$
and 
\[
\|y\|_{\mathcal Y_C^\Pi(\tilde \Omega)}:=\sup_{(\ell, \w, n)}\left (C(\ell, \w)\|y(\ell, \w, n)\|_{\Theta^n \lw}\right )<+\infty.
\]
Then, $(\mathcal Y_D^\Pi(\tilde \Omega), \| \cdot \|_{\mathcal Y_D^\Pi(\tilde \Omega)})$ is a Banach space. We stress that spaces $\mathcal Y^0(\tilde \Omega)$ and $\mathcal Y_C^\Pi(\tilde \Omega)$ depend on a random norm $\mathcal N$. However, in order to ease the notation we do not make this dependence explicit.

\begin{theorem}\label{T51}
Let $\Phi$ be a measurable linear NRDS admitting a $\mu$-dichotomy
		with respect to a random norm $\mathcal{N}$. 
  Then, there exist a $\theta$-invariant set $\tilde \Omega \subset \Omega$ of full measure  and a $\Theta$-invariant random variable $C\colon \TWW\to (0, \infty) $ such that, for each $y\in \mathcal Y^0(\tilde \Omega)$, there exists a unique $x\in \mathcal Y_C^\Pi(\tilde \Omega)$ satisfying 
  \begin{equation}\label{ADM}
  x(\ell, \w, n+1)=\Phi(1, n+\ell, \theta^n \w)x(\ell, \w, n)+\varphi_{\ell+n+1}^{-1}y(\ell, \w, n+1),
  \end{equation}
  for $(\ell, \w, n)\in \Z^+ \times \tilde \Omega \times \Z^+.$
  Here, $\Pi=(\Pi \lw)_{\lw \in \TWW}$, $\Pi \lw=P_{\lw}$, is a family of projections corresponding to the $\mu$-dichotomy of $\Phi$. 
  
  \end{theorem}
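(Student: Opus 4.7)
The plan is to construct the solution $x$ explicitly via a variation-of-constants formula split by the dichotomy projections, and then verify that it satisfies \eqref{ADM}, lies in $\mathcal Y_C^\Pi(\tilde\Omega)$ for a suitable $C$, and is unique. Let $\tilde\Omega$ be the full-measure $\theta$-invariant set provided by the $\mu$-dichotomy. For $y\in\mathcal Y^0(\tilde\Omega)$ and $(\ell,\w,n)\in\Z^+\times\tilde\Omega\times\Z^+$ set
\[
x(\ell,\w,n):=\sum_{k=1}^{n}\Phi(n-k,\ell+k,\theta^k\w)P_{\Theta^k\lw}\varphi_{\ell+k}^{-1}y(\ell,\w,k)-\sum_{k=n+1}^{\infty}\Phi(-(k-n),\ell+k,\theta^k\w)Q_{\Theta^k\lw}\varphi_{\ell+k}^{-1}y(\ell,\w,k).
\]
By construction the first sum lies in $\im P_{\Theta^n\lw}$ and the second in $\ker P_{\Theta^n\lw}$, so in particular $P_\lw x(\ell,\w,0)=0$. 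A direct computation using the cocycle identity, together with the fact that $\Phi(1,\ell+n,\theta^n\w)$ and $\Phi(-1,\ell+n+1,\theta^{n+1}\w)$ are mutual inverses on $\ker P_{\Theta^n\lw}$, shifts the index in both sums and produces the boundary contribution $(P_{\Theta^{n+1}\lw}+Q_{\Theta^{n+1}\lw})\varphi_{\ell+n+1}^{-1}y(\ell,\w,n+1)=\varphi_{\ell+n+1}^{-1}y(\ell,\w,n+1)$, which is precisely \eqref{ADM}.

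Next, invoking \eqref{eq:dich-1} and \eqref{eq:dich-2} together with the $\Theta$-invariance $D(\ell+k,\theta^k\w)=D\lw$ and $\lambda(\ell+k,\theta^k\w)=\lambda\lw$, every term in the two sums is dominated by
\[
D\lw\,\|y\|_{\mathcal Y(\tilde\Omega)}\left(\frac{\mu_{\ell+n}}{\mu_{\ell+k}}\right)^{\mp\lambda\lw}\frac{\mu_{\ell+k}'}{\mu_{\ell+k}}.
\]
Factoring out the appropriate power of $\mu_{\ell+n}$ and applying Lemma~\ref{prop:ESTIMATES} (with $\alpha=1-\lambda\lw$ for the stable piece when $\lambda\lw\in(0,1)$ and $\alpha=1+\lambda\lw$ for the unstable piece, the remaining range $\lambda\lw\geq 1$ for the stable part being handled by the analogous integral comparison with $\mu^{\lambda\lw}$), each series is bounded by some $K(\lambda\lw,\eta)$. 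Setting $C(\ell,\w):=1/\bigl(K(\lambda\lw,\eta)D\lw\bigr)$, which is $\Theta$-invariant because $\lambda$ and $D$ are, one obtains $\|x\|_{\mathcal Y_C^\Pi(\tilde\Omega)}\leq\|y\|_{\mathcal Y(\tilde\Omega)}$. Measurability of $x$ follows from the measurability of each summand (the inverse branches are covered by property $P_4$) together with normal convergence of the series.

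For uniqueness, if $x^{(1)},x^{(2)}\in\mathcal Y_C^\Pi(\tilde\Omega)$ both solve \eqref{ADM} for the same $y$, then $z:=x^{(1)}-x^{(2)}$ satisfies the homogeneous equation with $z(\ell,\w,0)\in\ker P_\lw$, and by invariance $z(\ell,\w,n)=\Phi(n,\ell,\w)Q_\lw z(\ell,\w,0)\in\ker P_{\Theta^n\lw}$ for all $n$. Applying the inverse on $\ker P$ and using \eqref{eq:dich-2-b} gives
\[
\|z(\ell,\w,0)\|_\lw\leq D\lw\left(\frac{\mu_{\ell+n}}{\mu_\ell}\right)^{-\lambda\lw}\|z(\ell,\w,n)\|_{\Theta^n\lw};
\]
since $\|z(\ell,\w,n)\|_{\Theta^n\lw}$ is bounded in $n$ while $\mu_{\ell+n}\to+\infty$, we conclude $z\equiv 0$. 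The step I expect to be hardest is packaging the constants produced by Lemma~\ref{prop:ESTIMATES} across the random range of $\lambda\lw$ into a single $\Theta$-invariant $C$ while still pushing the measurability check through the infinite tail sum; once that is in place, the remaining estimates and the uniqueness argument are essentially routine.
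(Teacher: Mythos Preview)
Your proposal is correct and follows essentially the same route as the paper: both construct $x$ by the split variation-of-constants formula, estimate each piece via \eqref{eq:dich-1}, \eqref{eq:dich-2} and Lemma~\ref{prop:ESTIMATES}, define $C$ from the resulting bound (the paper takes explicitly $C\lw=\lambda\lw/\bigl(D\lw(\eta^{\lambda\lw+1}+\eta)\bigr)$), and prove uniqueness by the homogeneous argument you give. Your remark about the parameter range in Lemma~\ref{prop:ESTIMATES} when $\lambda\lw\ge 1$ is actually more careful than the paper, which applies the lemma with $\alpha=1-\lambda\lw$ without comment; as you note, the underlying integral comparison extends to that range without difficulty.
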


  \begin{proof}
 Take $y\in \mathcal Y^0(\tilde \Omega)$ and set
\[
x^{(s)}(\ell, \w, n)=
							\displaystyle\sum_{k=0}^{n}\varphi_{\ell+k}^{-1}\Phi(n-k,\ell+k,\theta^k\w)P_{\Theta^k{\lw}}
       y(\ell, \w, k)
\]
and 
\[
x^{(u)}(\ell, \w, n)=-\sum_{j=1}^{\infty} \varphi_{\ell+n+j}^{-1}\Phi(-j,\ell+n+j,\theta^{n+j}\w)Q_{\Theta^{n+j}{\lw}}y(\ell, \w, n+j),
\]
for $(\ell, \w, n)\in \Z^+ \times \tilde \Omega \times \Z^+$.  Using~\eqref{eq:dich-1} and Lemma~\ref{prop:ESTIMATES} we have, for each $(\ell, \w, n) \in \Z^+ \times \tilde \Omega \times \Z^+$, 
\begin{equation}\label{7-13}
			\begin{split}
				&\left\|x^{(s)}(\ell, \w, n)\right\|_{\Theta^n{\lw}}\\
				 &\le \displaystyle\sum_{k=1}^{n}\varphi_{\ell+k}^{-1}D(\ell+k,\theta^k\w)\left(\frac{\mu_{\ell+n}}{\mu_{\ell+k}}\right)^{-\lambda(\ell+k,\theta^k\w)}\|y(\ell, \w, k)\|_{\Theta^k{\lw}}
                \\
				&\le D\lw \left\|y \right\|_{\mathcal Y(\tilde \Omega)} \mu_{\ell+n}^{-\lambda\lw}\sum_{k=1}^{n}\mu_{\ell+k}'\mu_{\ell+k}^{\lambda\lw-1}
                \\
                &\le D\lw \left\|y \right\|_{\mathcal Y(\tilde \Omega)} \mu_{\ell+n}^{-\lambda\lw}\frac{\eta^{1-\lambda\lw}}{\lambda\lw}(\mu_{\ell+n+1}^{\lambda\lw}-\mu_{\ell+1}^{\lambda\lw})
                \\
                &\le {D\lw}\left\|y \right\|_{\mathcal Y(\tilde \Omega)}\frac\eta{\lambda\lw}.
                \end{split}
                \end{equation}
                Similarly, 
        \begin{equation}\label{7-14}
		\begin{split}
				&\left\| x^{(u)}(\ell, \w, n)\right\|\\
 &\le D(\ell+n,\theta^n\w)\sum_{j=1}^{\infty}
				\varphi_{\ell+n+j}^{-1}\left(\frac{\mu_{\ell+n+j}}{\mu_{\ell+n}}\right)^{-\lambda(\ell+n,\theta^n\w)}\left\|y(\ell, \w, n+j)
	 \right\|_{\Theta^{n+j}{\lw}}
                \\&\le D(\ell,\w) \left\|y \right \|_{\mathcal Y(\tilde \Omega)}\mu_{\ell+n}^{\lambda\lw} \sum_{j=1}^{\infty} \mu_{\ell+n+j}'\mu_{\ell+n+j}^{-\lambda(\ell,\w)-1}
                \\&\le D(\ell,\w) \left\|y \right \|_{\mathcal Y(\tilde \Omega)}\mu_{\ell+n}^{\lambda\lw} \lim_{r\to+\infty}
                    \frac{\eta^{\lambda\lw+1}}{-\lambda\lw}\left(\mu_{\ell+n+r+1}^{-\lambda\lw}-\mu_{\ell+n+1}^{-\lambda(\ell,\w)}\right)                
                \\&\le D(\ell,\w) \left\|y \right \|_{\mathcal Y(\tilde \Omega)}\frac{\eta^{\lambda(\ell,\w)+1}}{\lambda\lw},
                \end{split}
		\end{equation}
  for $(\ell, \w, n)\in \Z^+ \times \tilde \Omega \times \Z^+$. Let
  \[
  x(\ell, \w, n):=x^{(s)}(\ell, \w, n)+x^{(u)}(\ell, \w, n), \quad (\ell, \w, n)\in \Z^+ \times \tilde \Omega \times \Z^+.
  \]
  Observe that $x$ is measurable, $x(\ell, \w, 0)\in \ker P_{\lw}=\ker \Pi \lw.$ This together with~\eqref{7-13} and~\eqref{7-14} implies that  $x\in \mathcal Y_C^\Pi$, where
  \[
  C \lw:=\frac{\lambda \lw }{D\lw (\eta^{\lambda \lw +1}+ \eta ) }, \quad \lw \in \TWW.
  \]
  Note that $C$ is $\Theta$-invariant as $D$ and $\lambda$ are $\Theta$-invariant. Next, 
  \[
  \begin{split}
  &x(\ell, \w, n+1)-\Phi(1, n+\ell, \theta^n \w)x(\ell, \w, n)  \\
&=\sum_{k=0}^{n+1}\varphi_{\ell+k}^{-1}\Phi(n+1-k,\ell+k,\theta^k\w)P_{\Theta^k{\lw}}y(\ell, \w, k) \\
&\phantom{=}-\sum_{k=0}^n \varphi_{\ell+k}^{-1}\Phi(n+1-k,\ell+k,\theta^k\w)P_{\Theta^k{\lw}}y(\ell, \w, k) \\
&\phantom{=}-\sum_{j=2}^{\infty} \varphi_{\ell+n+j}^{-1}\Phi(-j+1,\ell+n+j,\theta^{n+j}\w)Q_{\Theta^{n+j}{\lw}}y(\ell, \w, n+j)  \\
&\phantom{=}+\sum_{j=1}^{\infty} \varphi_{\ell+n+j}^{-1}\Phi(-j+1,\ell+n+j,\theta^{n+j}\w)Q_{\Theta^{n+j}{\lw}}y(\ell, \w, n+j)\\
&=\varphi_{\ell+n+1}^{-1}P_{\Theta^{n+1}{\lw}}y(\ell, \w, n+1)+\varphi_{\ell+n+1}^{-1}Q_{\Theta^{n+1}{\lw}}y(\ell, \w, n+1)\\
&=\varphi_{\ell+n+1}y(\ell, \w, n+1)
  \end{split}
  \]
  for $\lw \in \TWW$, which yields~\eqref{ADM}.

  It remains to establish the uniqueness of $x$. For this it is sufficient to show that if $x\in \mathcal Y_C^\Pi$ is such that~\eqref{ADM} holds for $y\equiv0$, that is if we have
  \[
  x(\ell, \w, n+1)=\Phi(1, n+\ell, \theta^n \w)x(\ell, \w, n) \quad \text{for $(\ell, \w, n)\in \Z^+\times \tilde \Omega \times \Z^+ $},
  \]
  then  $x\equiv 0$.
  Take an arbitrary $\lw \in \TWW$. Then,
  \[
  x(\ell, \w, n)=\Phi(n, \ell, \w )x(\ell, \w, 0)
  \]
  for $n\in \N$. Since $x(\ell, \w, 0)\in \ker P_{\lw}$,  we have that 
  \[
  \begin{split}
  \|x(\ell, \w, 0)\|_{{\lw}} &=\|\Phi(-k,\ell+k,\theta^k\w)Q_{\Theta^k(\ell,\w)}x(\ell, \w, k)\|_{{\lw}}\\
  &\leq D(\ell,\w) \left( \dfrac{\mu_{\ell+k}}
			{\mu_\ell}\right)^{-\lambda\lw}\|x(\ell, \w, k)\|_{\Theta^k{\lw}} \\
   &\le \frac{D\lw }{C\lw }\left( \dfrac{\mu_{\ell+k}}
			{\mu_\ell}\right)^{-\lambda\lw}\|x\|_{\mathcal Y_C^\Pi(\tilde \Omega)}.
  \end{split}
  \]
  Letting $k\to \infty$ yields that $x(\ell, \w, 0)=0$ and consequently $x(\ell, \w, n)=0$ for each $n\in \Z^+$. We conclude that $x\equiv 0$.
  \end{proof}

  We now establish a partial converse to Theorem~\ref{T51}.
  
  \begin{theorem}~\label{T52}
  Let $\mu$ be a growth rate such that for each $n\in\Z^+$ there exists an integer $q_n\geq n+1$ with the property that there are 
		 constants $L_{1},L_{2}>1$ such that
		\begin{equation}
			\label{eq:COND-minimal-growth-1}L_{1} \le \frac{\mu_{q_n}}{\mu_n}
			\le L_{2}, \quad n\in \Z^+.
		\end{equation}
Suppose that there is a random norm $\mathcal N$, a $\Theta$-invariant set  $\tilde \Omega \subset \Omega$ of full measure,  a  family of projections $\Pi=(\Pi \lw)_{\lw \in \TWW}$ and a $\Theta$-invariant random variable $C\colon \TWW \to (0, \infty)$ such that, for each $y\in \mathcal Y^0(\tilde \Omega)$, there exists a unique $x\in \mathcal Y_C^\Pi(\tilde \Omega)$ satisfying~\eqref{ADM}. In addition,  assume that there are $\Theta$-forward invariant random variables $\lambda\colon\TWW\to(0,+\infty)$ and $M\colon\TWW\to[1,+\infty)$ such that 
\begin{equation}
			\label{eq:teo:exist-dich:exponential-bound-1}
                \|\Phi(n,\ell,\w)v\|_{\Theta^n\lw}\le
			M(\ell,\w)\left(\frac{\mu_{\ell+n}}{\mu_\ell}\right)^{\lambda\lw}\|v\|_{\lw},
		\end{equation}
		for all $(n,\ell,\w)\in\Z^+\x \Z^+    \x\tilde\W$ and $v\in X$.
Then, $\Phi$ admits a $\mu$-dichotomy with respect to the family of norms $\mathcal N$.
  \end{theorem}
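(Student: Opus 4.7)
The approach follows the standard admissibility-to-dichotomy strategy, adapted to the random nonautonomous setting (cf.\ the deterministic counterpart in~\cite{Silva_2021}). I take the given family $\Pi$ as the dichotomy projections $P$, promote admissibility to a bounded solution operator via the closed graph theorem, use tailored test inputs together with the growth bound~\eqref{eq:teo:exist-dich:exponential-bound-1} and the block-growth condition~\eqref{eq:COND-minimal-growth-1} to extract the required decay rates, and finally verify the invariance and measurability axioms.

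\textbf{Step 1 (bounded solution operator).} Define $T\colon \mathcal Y^0(\TW)\to\mathcal Y_C^\Pi(\TW)$ by $T(y)=x$; this is well-defined and linear by uniqueness. If $y_j\to y$ and $T(y_j)\to\tilde x$ in the respective sup-type norms, then pointwise convergence lets one pass to the limit in~\eqref{ADM}, so $\tilde x=T(y)$ and the closed graph theorem gives $\|T(y)\|_{\mathcal Y_C^\Pi}\le K\|y\|_{\mathcal Y^0}$ for some $K>0$.

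\textbf{Step 2 (stable and unstable decay).} Fix $(\ell_0,\w_0)\in\TWW$ and $v\in \mathrm{Im}\,\Pi(\ell_0,\w_0)$ with $\|v\|_{(\ell_0,\w_0)}=1$. I would construct a family $\{y_N\}\subset \mathcal Y^0$ of test inputs based on a measurable selection of unit vectors in $\mathrm{Im}\,\Pi(\cdot)$, whose values along the $\Theta$-orbit of $(\ell_0,\w_0)$ are $y_N(\ell_0,\w_0,k)=\varphi_{\ell_0+k}\,g_k\,\Phi(k,\ell_0,\w_0)v$ for $1\le k\le N$, with weights $g_k$ of the form $\mu_{\ell_0+k}'\mu_{\ell_0+k}^{-\lambda(\ell_0,\w_0)-1}$. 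Lemma~\ref{prop:ESTIMATES} combined with the growth bound~\eqref{eq:teo:exist-dich:exponential-bound-1} yields a uniform estimate for $\sup_N\|y_N\|_{\mathcal Y^0}$. Variation of parameters gives
\[
x_N(\ell_0,\w_0,n)=\Phi(n,\ell_0,\w_0)\bigl(x_N(\ell_0,\w_0,0)+A_n^{(N)}v\bigr),\qquad A_n^{(N)}=\sum_{k=1}^{\min(n,N)}g_k,
\]
with $x_N(\ell_0,\w_0,0)\in\ker\Pi(\ell_0,\w_0)$. Inserting $n=q_{\ell_0}-\ell_0$ in the admissibility bound $\|x_N\|_{\mathcal Y_C^\Pi}\le K\|y_N\|_{\mathcal Y^0}$, and using~\eqref{eq:COND-minimal-growth-1} to control $\mu_{q_{\ell_0}}/\mu_{\ell_0}$ uniformly, produces a strict contraction $\|\Phi(q_{\ell_0}-\ell_0,\ell_0,\w_0)v\|\le\rho\|v\|$ with $\rho<1$. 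Iterating this block estimate along the sequence $\ell_0<q_{\ell_0}<q_{q_{\ell_0}}<\cdots$ and converting the block count back into $\mu$-growth via~\eqref{eq:COND-minimal-growth-1} yields~\eqref{eq:dich-1}. A dual construction with inputs supported in a future window $\{n+1,\dots,n+N\}$ produces~\eqref{eq:dich-2}; injectivity of $\Phi(n,\ell,\w)|_{\ker\Pi(\ell,\w)}$ and identification of the inverse come from uniqueness applied to the zero input (a nonzero vector in the kernel with vanishing $\Phi(n)$-image would generate, by null extension, a nontrivial bounded admissible solution for $y\equiv0$).

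\textbf{Main obstacle.} The crux is Step~2: the weights $g_k$, the measurable extension of $y_N$ off the $\Theta$-orbit of $(\ell_0,\w_0)$, and the iteration across $q$-blocks all have to be engineered simultaneously so that (i) $\|y_N\|_{\mathcal Y^0}$ is controlled uniformly in $N$ using only~\eqref{eq:teo:exist-dich:exponential-bound-1}, (ii) the admissibility bound produces a strict contraction per block, and (iii) the iterated geometric decay recovers the precise $\mu$-rate with constants $D(\ell,\w)$ and $\lambda(\ell,\w)$ that are $\Theta$-forward invariant. The remaining $\Phi$-invariance $\Pi_{\Theta^n(\ell,\w)}\Phi(n,\ell,\w)=\Phi(n,\ell,\w)\Pi(\ell,\w)$ and measurability of the backward iterate on $\ker\Pi$ are then extracted from uniqueness applied to pairs of coupled admissible inputs; the simultaneous presence of randomness and nonautonomy makes measurability and $\Theta$-forward invariance a persistent bookkeeping concern throughout.
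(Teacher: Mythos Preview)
There is a genuine gap in Step~2, rooted in your decision to take $P=\Pi$. The admissibility hypothesis involves $\Pi$ only through the constraint $x(\ell,\w,0)\in\ker\Pi(\ell,\w)$ defining $\mathcal Y_C^\Pi(\TW)$; hence it depends solely on $\ker\Pi$, and $\mathrm{Im}\,\Pi$ is completely undetermined. In particular nothing forces $\mathrm{Im}\,\Pi$ to be $\Phi$-invariant or to consist of vectors with bounded forward orbit, so your contraction argument for $v\in\mathrm{Im}\,\Pi(\ell_0,\w_0)$ cannot succeed as stated: with your test input the admissible solution satisfies $x_N(\ell_0,\w_0,n)=\Phi(n,\ell_0,\w_0)\bigl(x_N(\ell_0,\w_0,0)+A_n^{(N)}v\bigr)$ where $x_N(\ell_0,\w_0,0)\in\ker\Pi(\ell_0,\w_0)$ is \emph{unknown}, and the bound $\|x_N\|_{\mathcal Y_C^\Pi}\le K\|y_N\|_{\mathcal Y^0}$ controls only this mixed expression, not $\|\Phi(n,\ell_0,\w_0)v\|$ itself. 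The invariance you hope to ``extract from uniqueness'' is simply false for generic $\Pi$.

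The paper repairs this by constructing the dichotomy projections from scratch: set $V_{(\ell,\w)}=\{v:\sup_n\|\Phi(n,\ell,\w)v\|_{\Theta^n(\ell,\w)}<\infty\}$ and $Z_{(\ell,\w)}=\Phi(\ell,0,\theta^{-\ell}\w)\ker\Pi(0,\theta^{-\ell}\w)$, prove (via carefully placed single-fiber test inputs and uniqueness) that $X=V_{(\ell,\w)}\oplus Z_{(\ell,\w)}$, and take $P_{(\ell,\w)}$ to be the projection onto $V$ along $Z$. Invariance and invertibility on $Z$ then hold by construction. The decay estimate is proved for $v\in V_{(\ell_0,\w_0)}$: because such $v$ already has bounded orbit, the \emph{explicit} candidate $x$ with $x(0,\theta^{-\ell_0}\w_0,k)=0$ for $k\le\ell_0$ lies in $\mathcal Y_C^\Pi$, so $x=Ty$ by uniqueness and the admissibility bound applies directly. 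The test input used is the normalized one $y(k)=\Phi(k-\ell_0,\ell_0,\w_0)v/\|\Phi(k-\ell_0,\ell_0,\w_0)v\|_{\Theta^{k-\ell_0}(\ell_0,\w_0)}$, which yields the key inequality $\|T\|\ge C(\ell_0,\w_0)\|\Phi(n,\ell_0,\w_0)v\|\sum_j\varphi_j^{-1}\|\Phi(j-\ell_0,\ell_0,\w_0)v\|^{-1}$; one first combines this with~\eqref{eq:teo:exist-dich:exponential-bound-1} and~\eqref{eq:COND-minimal-growth-1} to get a uniform bound $\|\Phi(n,\ell_0,\w_0)v\|\le M_2(\ell_0,\w_0)\|v\|$, then feeds $M_2$ back into the same inequality together with~\eqref{prop:ESTIMATES-est-=1} to obtain an $\e^{-1}$-contraction once $\mu_{\ell_0+n+1}\ge N_0(\ell_0,\w_0)\mu_{\ell_0}$, and only then iterates over $q$-blocks. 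Your single-step ``insert $n=q_{\ell_0}-\ell_0$ and get $\rho<1$'' skips this two-stage mechanism and does not produce a contraction.
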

The proof of Theorem~\ref{T52} will be split into several lemmas. 
Let $$\mathcal T\colon\mathcal{Y}^0(\tilde \Omega)\to\mathcal{Y}_C^\Pi(\tilde \Omega)$$ be a linear operator given by $\mathcal Ty=x$ for $y\in \mathcal Y^0(\tilde \Omega)$, where $x$ is the unique element of $\mathcal Y_C^\Pi(\tilde \Omega)$ such that~\eqref{ADM} holds. 
\begin{lemma}
$\mathcal T$ is a bounded linear operator.
\end{lemma}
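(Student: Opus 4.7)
The plan is to invoke the Closed Graph Theorem. Since both $(\mathcal Y^0(\tilde\Omega),\|\cdot\|_{\mathcal Y(\tilde\Omega)})$ and $(\mathcal Y_C^\Pi(\tilde\Omega),\|\cdot\|_{\mathcal Y_C^\Pi(\tilde\Omega)})$ are Banach spaces and $\mathcal T$ is easily seen to be linear (linearity follows from the uniqueness of the solution to~\eqref{ADM} together with the linearity of that equation in $y$), it suffices to check that the graph of $\mathcal T$ is closed.

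To this end, suppose $y_n\to y$ in $\mathcal Y^0(\tilde\Omega)$ and $\mathcal T y_n\to x$ in $\mathcal Y_C^\Pi(\tilde\Omega)$. Writing $x_n:=\mathcal T y_n$, each pair $(y_n,x_n)$ satisfies
\[
x_n(\ell,\w,k+1)=\Phi(1,k+\ell,\theta^k\w)x_n(\ell,\w,k)+\varphi_{\ell+k+1}^{-1}y_n(\ell,\w,k+1)
\]
for every $(\ell,\w,k)\in\Z^+\times\tilde\Omega\times\Z^+$. Fix such a triple $(\ell,\w,k)$. Convergence in $\mathcal Y(\tilde\Omega)$ is, by definition, uniform convergence in the random norm $\mathcal N$, so in particular $\|y_n(\ell,\w,k+1)-y(\ell,\w,k+1)\|_{\Theta^{k+1}\lw}\to 0$. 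Analogously, since $C(\ell,\w)>0$ is fixed, convergence in $\mathcal Y_C^\Pi(\tilde\Omega)$ yields $\|x_n(\ell,\w,j)-x(\ell,\w,j)\|_{\Theta^j\lw}\to 0$ for $j=k,k+1$. Because $\|\cdot\|_{\Theta^j\lw}$ is equivalent to $\|\cdot\|$ for each fixed $(\ell,\w,j)$, all these convergences hold in the norm of $X$, and we may pass to the limit in the identity above, using that $\Phi(1,k+\ell,\theta^k\w)\in\mathcal L(X)$, to obtain
\[
x(\ell,\w,k+1)=\Phi(1,k+\ell,\theta^k\w)x(\ell,\w,k)+\varphi_{\ell+k+1}^{-1}y(\ell,\w,k+1).
\]
Moreover, $x_n(\ell,\w,0)\in\ker\Pi\lw$ for every $n$, so the limit $x(\ell,\w,0)$ also lies in $\ker\Pi\lw$, i.e.\ $x\in\mathcal Y_C^\Pi(\tilde\Omega)$ satisfies~\eqref{ADM} with input $y$.

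By the uniqueness part of the admissibility hypothesis applied to $y\in\mathcal Y^0(\tilde\Omega)$, we conclude $x=\mathcal T y$, so the graph of $\mathcal T$ is closed. The Closed Graph Theorem then gives that $\mathcal T$ is bounded. The only mildly delicate point is justifying the pointwise passage to the limit from norm convergence in the two ambient spaces; this is settled by the equivalence between $\|\cdot\|_{\lw}$ and $\|\cdot\|$ built into the definition of a random norm, together with the positivity of $C\lw$ at each fixed $\lw$.
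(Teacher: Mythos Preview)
Your proof is correct and follows essentially the same approach as the paper: both arguments show that $\mathcal T$ has closed graph by passing to the limit pointwise in~\eqref{ADM} and then appeal (implicitly in the paper's case) to the Closed Graph Theorem. Your version is slightly more explicit about why the pointwise limits exist and why $x(\ell,\w,0)\in\ker\Pi\lw$, but the latter is already guaranteed by the hypothesis $x\in\mathcal Y_C^\Pi(\tilde\Omega)$.
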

\begin{proof}
It is sufficient to show that $\mathcal T$ is a closed operator. To this end, suppose that $(y_n)_{n\in \N}$ is a sequence in $\mathcal Y^0(\tilde \Omega)$ such that $y_n \to y$ in $\mathcal Y^0(\tilde \Omega)$ and $x_n:=\mathcal T y_n \to x$ in $\mathcal Y_C^\Pi(\tilde \Omega)$. Note that 
\begin{equation}\label{limits}
\lim_{n\to \infty}y_n (\ell,\w,k)= y(\ell, \w, k) \quad \text{and} \quad \lim_{n\to \infty}x_n (\ell,\w, k)=x (\ell,\w, k),
\end{equation}
for $(\ell,\w, k) \in \Z^+\times \tilde \Omega\times \Z^+$. Since $\mathcal Ty_n=x_n$, we have that 
\begin{equation}\label{ADM2}
x_n(\ell, \w, k+1)=\Phi(1, k+\ell, \theta^k \w)x_n(\ell, \w, k)+\varphi_{\ell+k+1}^{-1}y_n(\ell, \w, k+1),
\end{equation}
for $n\in \N$ and $(\ell, \w, k)\in \Z^+\times \tilde \Omega \times \Z^+$. By passing to the limit when $n\to \infty$ in~\eqref{ADM2} and using~\eqref{limits} we obtain that 
\[
x(\ell, \w, k+1)=\Phi(1, k+\ell, \theta^k \w)x(\ell, \w, k)+\varphi_{\ell+k+1}^{-1}y(\ell, \w, k+1),
\]
for $(\ell, \w, k)\in \Z^+\times \tilde \Omega \times \Z^+$. Hence, $\mathcal Ty=x$ which yields that $\mathcal T$ is closed. 
\end{proof}
  
Take $Z_\w=\ker \Pi(0, \w)$, $\w \in \tilde \Omega$. For each $\lw\in\TWW$, consider the following subspaces of $X$:
	\[
		V_{\lw}=\left\{v \in X: \sup_{n\geq 0}\|\Phi(n,\ell,\w)v \|_{\Theta^n\lw}<\infty\right
		\}\]
        and
        \[
        Z_{\lw}=
        \begin{cases} 
        Z_{\w}&\,\text{ if } \ell=0\\
        \Phi(\ell,0,\theta^{-\ell}\w)Z_{\theta^{-\ell}\w}&\,\text { if } \ell>0.
	\end{cases}          
        \]
  	Given any $\lw \in \TWW$ and $n \in \Z^+$, it follows easily that
	\[
		\Phi(1,\ell,\w) V_{\lw}\subset V_{\Theta\lw }\quad \text{ and }\quad \Phi(1,\ell,\w)Z_{\lw}= Z_{\Theta\lw}.
	\] 
\begin{lemma}
 \label{spp-1}
		For each $(\ell_0, \w_0)\in\TWW$  we have \begin{equation}\label{SPLIT}
  X=V_{(\ell_0, \w_0)}\oplus Z_{(\ell_0, \w_0)}.
  \end{equation}
	\end{lemma}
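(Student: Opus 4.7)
The plan is to prove the two halves of the direct sum separately, with admissibility as the main tool. Since admissibility is a global measurable condition while the claim is pointwise, a recurring subtlety is to promote pointwise data at $(0,\theta^{-\ell_0}\w_0)$ to measurable globals on $\Z^+\times\tilde\Omega\times\Z^+$.

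For $V_{(\ell_0,\w_0)} \cap Z_{(\ell_0,\w_0)} = \{0\}$, take $v$ in the intersection and, using the definition of $Z$, write $v=\Phi(\ell_0,0,\theta^{-\ell_0}\w_0)\,z$ with $z\in\ker\Pi(0,\theta^{-\ell_0}\w_0)$. The sequence $\xi_n:=\Phi(n,0,\theta^{-\ell_0}\w_0)\,z$ is bounded in random norm: trivially for $n<\ell_0$, and for $n\geq\ell_0$ via the cocycle identity $\xi_n=\Phi(n-\ell_0,\ell_0,\w_0)\,v$ together with $v\in V_{(\ell_0,\w_0)}$. The plan is to realise $(\xi_n)_n$ as the $(0,\theta^{-\ell_0}\w_0)$-section of some measurable $\tilde x\in\mathcal Y_C^\Pi(\tilde\Omega)$ solving the homogeneous ADM equation ($y\equiv 0$); the uniqueness clause built into the admissibility hypothesis then forces $\tilde x\equiv 0$, whence $z=0$ and $v=0$.

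For $V_{(\ell_0,\w_0)} + Z_{(\ell_0,\w_0)} = X$, given $v\in X$ apply admissibility to an input $y\in\mathcal Y^0(\tilde\Omega)$ that, up to a random rescaling ensuring a finite $\mathcal Y$-norm, deposits $\varphi_{\ell_0}\,v$ at the single point $(0,\theta^{-\ell_0}\w_0,\ell_0)$ and vanishes elsewhere. Unrolling~\eqref{ADM} for $x=\mathcal Ty$ along the section $n\mapsto x(0,\theta^{-\ell_0}\w_0,n)$ yields, after removing the scaling,
\begin{equation*}
v \;=\; x(0,\theta^{-\ell_0}\w_0,\ell_0) \;-\; \Phi(\ell_0,0,\theta^{-\ell_0}\w_0)\,x(0,\theta^{-\ell_0}\w_0,0).
\end{equation*}
For every $m\geq 0$, the cocycle identity together with the homogeneous part of~\eqref{ADM} past time $\ell_0$ gives $\Phi(m,\ell_0,\w_0)\,x(0,\theta^{-\ell_0}\w_0,\ell_0)=x(0,\theta^{-\ell_0}\w_0,\ell_0+m)$, and the right-hand side is uniformly bounded in random norm since $x\in\mathcal Y_C^\Pi(\tilde\Omega)$; hence the first summand lies in $V_{(\ell_0,\w_0)}$. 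The second summand lies in $Z_{(\ell_0,\w_0)}$ directly from the defining identity $Z_{(\ell_0,\w_0)}=\Phi(\ell_0,0,\theta^{-\ell_0}\w_0)\ker\Pi(0,\theta^{-\ell_0}\w_0)$ together with $x(0,\theta^{-\ell_0}\w_0,0)\in\ker\Pi(0,\theta^{-\ell_0}\w_0)$. The degenerate case $\ell_0=0$ is handled analogously with the trivial cocycle identification.

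The main obstacle is the measurability of the constructed globals: since the singleton $\{\theta^{-\ell_0}\w_0\}$ need not belong to $\mathcal F$, one cannot concentrate $y$ or $\tilde x$ there directly. The workaround is to rescale by a measurable factor tying $\|\cdot\|_\lw$ to $\|\cdot\|$, so that the constructed $y$ becomes globally measurable with finite $\mathcal Y$-norm while still producing the identity above along the chosen orbit. The harder step is the construction of $\tilde x$ in the first part: one needs a measurable selection $\tilde u(\ell,\w)\in\ker\Pi(\ell,\w)$ with $\tilde u(0,\theta^{-\ell_0}\w_0)=z$ whose orbit $\Phi(n,\ell,\w)\tilde u(\ell,\w)$ stays uniformly bounded in random norm across all $(\ell,\w,n)$, and it is here that the measurability of $\Pi$, the growth bound~\eqref{eq:teo:exist-dich:exponential-bound-1}, and the $\Theta$-invariance of $C$ and $\lambda$ all enter.
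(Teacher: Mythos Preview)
Your overall strategy matches the paper's proof exactly: for the sum, feed a single-point input $y$ supported at $(0,\theta^{-\ell_0}\w_0,\ell_0)$ into admissibility and read off the decomposition of $v$ from the output $x=\mathcal Ty$; for the trivial intersection, realise the orbit $\xi_n=\Phi(n,0,\theta^{-\ell_0}\w_0)z$ as an element of $\mathcal Y_C^\Pi(\tilde\Omega)$ solving the homogeneous equation and invoke uniqueness. The paper also splits off the case $\ell_0=0$ separately, but the mechanism is the same.

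Where you diverge is in the measurability discussion, and here you have created a difficulty that the paper does not attempt to solve and that your proposed workarounds do not resolve. The paper simply defines every auxiliary map to vanish identically off the single section $(\ell,\w)=(0,\theta^{-\ell_0}\w_0)$; for instance, for the intersection it sets
\[
\underline{x}(\ell,\w,n)=
\begin{cases}
\Phi(n,0,\theta^{-\ell_0}\w_0)z & (\ell,\w)=(0,\theta^{-\ell_0}\w_0),\\
0 & \text{otherwise},
\end{cases}
\]
and asserts $\underline{x}\in\mathcal Y_C^\Pi(\tilde\Omega)$ directly. This is standard in the admissibility literature and is exactly what is done in all subsequent lemmas of the paper (e.g.\ the proofs of Lemmas~\ref{lemma:estimate-cA-X-1} and~\ref{lemma:estimate-cA-Z-1}); one is tacitly working with a probability space in which singletons are measurable. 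Your alternative route---rescaling to force a globally measurable $y$, and producing a measurable selection $\tilde u(\ell,\w)\in\ker\Pi(\ell,\w)$ whose orbit is bounded in random norm uniformly over \emph{all} $(\ell,\w,n)$---is not only much harder, it is essentially circular: exhibiting such a $\tilde u$ with globally bounded orbit amounts to knowing the stable/unstable splitting you are in the process of constructing. Drop this detour and proceed as the paper does; the argument is then complete and identical to the paper's.
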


	\begin{proof}
 Let us start by proving the desired conclusion for $(0,\w_0)\in \TWW$. For $v \in X$, we 
 consider  $x, y\in \Z^+\times \tilde \Omega \times \Z^+\to X$ defined by
 \[
 x(\ell, \w, n)=\begin{cases}
 v & (\ell, \w, n)=(0, \w_0, 0) \\
 0 & \text{otherwise}
 \end{cases}
 \]
 and 
 \[
 y(\ell, \w, n)=\begin{cases}
-\varphi_1 \Phi(1, 0, \w_0)v & (\ell, \w, n)=(0, \w_0, 1) \\
0 & \text{otherwise.}
 \end{cases}
 \]
 Observe that $y\in \mathcal Y^0(\tilde \Omega)$,  and 
 \[
 x(\ell, \w, n+1)=\Phi(1, \ell+n, \theta^n \w)x(\ell, \w, n)+\varphi_{\ell+n+1}^{-1}y(\ell, \w, n+1), 
 \]
 for $(\ell, \w, n)\in \Z^+\times \tilde \Omega \times \Z^+$. 
 Set $\tilde x:=\mathcal T y\in \mathcal Y_C^\Pi(\tilde \Omega)$. Then,
 \[
 x(\ell, \w, n+1)-\tilde x(\ell, \w, n+1)=\Phi(1, \ell+n, \theta^n \w)(x(\ell, \w, n)-\tilde x(\ell, \w, n)),
 \]
 for $(\ell, \w, n)\in \Z^+\times \tilde \Omega \times \Z^+$. Consequently, 
\[
x(\ell, \w, n)-\tilde x(\ell, \w, n)=\Phi(n, \ell, \w)(x(\ell, \w, 0)-\tilde x(\ell, \w, 0)),
\]
for $(\ell, \w, n)\in \Z^+\times \tilde \Omega \times \Z^+$. In particular, 
\[
x(0, \w_0, n)-\tilde x (0, \w_0, n)=\Phi(n, 0, \w_0)(x(0, \w_0, 0)-\tilde x(0, \w_0, 0)).
\]
Since $\tilde x\in \mathcal Y_C^\Pi(\tilde \Omega)$, we have $\tilde x(0,\w_0,0)\in\ker\Pi_{(0,\w_0)}=Z_{(0,\w_0)}$ and
\[x(0, \w_0, 0)-\tilde x(0, \w_0, 0)=v-\tilde x(0, \w_0, 0)\in V_{(0, \w_0)},
\] 
which yields that 
\[
v=v-\tilde x(0, \w_0, 0)+\tilde x(0, \w_0, 0)\in V_{(0, \w_0)}+Z_{(0, \w_0)}.
\]

Consider now the case $\ell_0\geq 1$. For $v\in X$ we define $\bar y\colon \Z^+\times \tilde \Omega \times \Z^+\to X$ by 
\[
\bar y(\ell, \w, n)=\begin{cases}
\varphi_{\ell_0} v & (\ell, \w, n)=(0, \theta^{-\ell_0} \w_0, \ell_0) \\
0 & \text{otherwise.}
\end{cases}
\]
Then, $\bar y\in \mathcal Y^0(\tilde \Omega)$. Let $\bar x=\mathcal T \bar y\in \mathcal Y_C^\Pi(\tilde \Omega)$. Note that $\bar x(0, \theta^{-\ell_0}\w_0, 0)\in Z_{ \theta^{-\ell_0}\w_0}$. Moreover, in one hand we have 
\[
\bar x(0, \theta^{-\ell_0} \w_0, \ell_0)-\Phi(1, \ell_0-1, \theta^{-1}\w_0)\bar x(0, \theta^{-\ell_0} \w_0, \ell_0-1)=v
\]
with 
\[
\Phi(1, \ell_0-1, \theta^{-1}\w_0)\bar x(0, \theta^{-\ell_0} \w_0, \ell_0-1)=\Phi(\ell_0, 0, \theta^{-\ell_0}\w_0)\bar x(0, \theta^{-\ell_0}\w_0, 0)\in Z_{(\w_0, \ell_0)}.
\]
On the other hand, since $\bar x\in \mathcal Y_C^\Pi(\tilde \Omega)$
\[
\sup_{n\ge 0}\|\Phi(n, \ell_0, \w_0)\bar x(0, \theta^{-\ell_0}\w_0, \ell_0)\|_{\Theta^n (\ell_0, \w_0)}=\sup_{n\ge 0}\|\bar x(0, \theta^{-\ell}\w_0, n+\ell_0)\|_{\Theta^n (\ell_0, \w_0)}<+\infty,
\]
which yields that $\bar x(0, \theta^{-\ell_0}\w_0, \ell_0)\in V_{(\ell_0, \w_0)}$.
 We conclude that 
\[
v=\bar x(0, \theta^{-\ell_0} \w_0, \ell_0)-\Phi(1, \ell_0-1, \theta^{\ell_0-1}\w_0)\bar x(0, \theta^{-\ell_0} \w_0, \ell_0-1)\in V_{(\ell_0, \w_0)}+Z_{(\ell_0, \w_0)}.
\]

        Consider now any $\ell_0\in \Z^+$ and $v \in V_{(\ell_0, \w_0)}\cap Z_{(\ell_0, \w_0)}$. Since $v
		\in Z_{(\ell_0, \w_0)}=\Phi(\ell_0,0,\theta^{-\ell_0}\w_0)Z_{\theta^{-\ell_0}\w_0}$, there is $z \in Z_{\theta^{-\ell_0}\w_0}$ such that
$v=\Phi(\ell_0,0,\theta^{-\ell_0}\w_0) z$. Define $\underline{x}\colon \Z^+\times \tilde \Omega \times \Z^+\to X$ by
\[
\underline{x}(\ell, \w, n)=\begin{cases}
\Phi(n, 0, \theta^{-\ell_0}\w_0)z & (\ell, \w)=(0, \theta^{-\ell_0}\w_0) \\
0 &\text{otherwise.}
\end{cases}
\]
Then, $\underline{x}\in \mathcal Y_C^\Pi(\tilde \Omega)$ and $\underline{x}=\mathcal T0$. Therefore, $\underline{x} = 0$, implying $v = \underline{x}(0, \theta^{-\ell_0} \w_0, \ell_0) = 0$. The proof of the lemma is completed.
\end{proof}
For $\lw \in \TWW$,
let $P_{\lw}\colon X \to V_{\lw}$ be the projection associated to the splitting~\eqref{SPLIT}.

\begin{lemma}\label{measurability}
For each $v\in X$, the map $\lw \mapsto P_{\lw}v$ is measurable.
\end{lemma}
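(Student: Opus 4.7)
The plan is to leverage the explicit construction from Lemma~\ref{spp-1} together with a measurable truncation. Since $\ell$ ranges over the countable set $\Z^+$, it suffices to show that for each fixed $\ell_0 \in \Z^+$ and $v \in X$, the map $\w \mapsto P_{(\ell_0, \w)} v$ is $\F$-measurable. I would focus on $\ell_0 = 0$; the case $\ell_0 \ge 1$ is handled analogously, using the input $\bar y_{\w_0}$ from Lemma~\ref{spp-1} together with an additional $\theta^{-\ell_0}$ shift on the base.

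The obstacle is that the witness inputs $y_{\w}$ used in the proof of Lemma~\ref{spp-1} are supported at the moving single point $(0, \w, 1)$, so $\w \mapsto y_\w$ is not strongly measurable as a $\mathcal Y^0(\TW)$-valued map. To replace $y_\w$ by a global measurable object, for each $N \in \N$ I would introduce the measurable set
\[
A_N = \set{\w \in \TW : \|\Phi(1, 0, \w) v\|_{(1, \theta\w)} \le N},
\]
together with the input $Y_N \in \mathcal Y^0(\TW)$ defined by $Y_N(0, \w, 1) = -\varphi_1 \Phi(1, 0, \w) v$ when $\w \in A_N$ and $Y_N(\ell, \w, n) = 0$ otherwise. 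Then $\|Y_N\|_{\mathcal Y^0(\TW)} \le \varphi_1 N < \infty$ and $Y_N(\cdot, \cdot, 0) \equiv 0$; moreover, since $\|\Phi(1, 0, \w) v\|_{(1, \theta \w)} < \infty$ for every $\w$, one has $\TW = \bigcup_{N \in \N} A_N$.

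The crux is the identity $(\mathcal T Y_N)(0, \w, 0) = v - P_{(0, \w)} v$ for every $\w \in A_N$. To prove it, I would compare $Y_N$ with the pointwise input $y_\w$ of Lemma~\ref{spp-1}: the difference $\bar y := Y_N - y_\w$ vanishes along the orbit $\set{(0, \w, k) : k \ge 0}$, so $\bar x := \mathcal T \bar y \in \mathcal Y_C^\Pi(\TW)$ satisfies the homogeneous recursion there, yielding $\bar x(0, \w, n) = \Phi(n, 0, \w) \bar x(0, \w, 0)$ for all $n \in \Z^+$. Membership of $\bar x$ in $\mathcal Y_C^\Pi(\TW)$ forces $\bar x(0, \w, 0) \in \ker \Pi_{(0, \w)} = Z_{(0, \w)}$, while the boundedness $\sup_n \|\bar x(0, \w, n)\|_{(n, \theta^n \w)} < \infty$ places $\bar x(0, \w, 0)$ in $V_{(0, \w)}$. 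Lemma~\ref{spp-1} then forces $\bar x(0, \w, 0) = 0$, yielding the identity.

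Once the identity is established, the measurability of $\w \mapsto P_{(0, \w)} v = v - (\mathcal T Y_N)(0, \w, 0)$ on each $A_N$ is immediate from the joint measurability of $\mathcal T Y_N$ as an element of $\mathcal Y_C^\Pi(\TW)$. Patching over the countable family $\set{A_N}_{N \in \N}$ then gives measurability on all of $\TW$. The hard step is the orbit-wise cancellation argument, which is precisely where the uniqueness encoded in the admissibility hypothesis interacts with the geometric splitting provided by Lemma~\ref{spp-1}.
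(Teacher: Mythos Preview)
Your argument is correct, and it is considerably more careful than the paper's. The paper's proof consists of a single displayed formula: it records that $P_{(\ell_0,\w_0)}v$ equals $v-\tilde x(0,\w_0,0)$ for $\ell_0=0$, and $\bar x(0,\theta^{-\ell_0}\w_0,\ell_0)$ for $\ell_0\ge1$, where $\tilde x=\mathcal T y_{\w_0}$ and $\bar x=\mathcal T\bar y_{\w_0}$ are the responses to the singleton-supported inputs from Lemma~\ref{spp-1}, and then declares that measurability is ``readily implied.'' The paper does not address the obstacle you identify, namely that $\w_0\mapsto y_{\w_0}$ is supported at a moving single point and hence is not a strongly measurable $\mathcal Y^0$-valued map, so measurability of $\w_0\mapsto(\mathcal T y_{\w_0})(0,\w_0,0)$ is not automatic from the boundedness of $\mathcal T$ alone.

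Your truncation to the global input $Y_N\in\mathcal Y^0(\tilde\Omega)$, together with the orbit-wise uniqueness argument forcing $(\mathcal T Y_N)(0,\w_0,0)=(\mathcal T y_{\w_0})(0,\w_0,0)$ for every $\w_0\in A_N$, supplies exactly the missing step and exploits that elements of $\mathcal Y_C^\Pi(\tilde\Omega)$ are by definition jointly measurable. The cancellation can even be streamlined so as to avoid any reference to the singleton-supported $y_{\w_0}$: along the orbit $\{(0,\w_0,n):n\ge0\}$ with $\w_0\in A_N$, the recursion for $X_N:=\mathcal T Y_N$ yields directly $X_N(0,\w_0,n)=\Phi(n,0,\w_0)\bigl(X_N(0,\w_0,0)-v\bigr)$ for $n\ge1$, so $X_N(0,\w_0,0)-v\in V_{(0,\w_0)}$ while $X_N(0,\w_0,0)\in Z_{(0,\w_0)}$, giving $P_{(0,\w_0)}v=v-X_N(0,\w_0,0)$. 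In short, the paper invokes the pointwise formula and leaves the passage to measurability implicit; your route makes that passage rigorous.
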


\begin{proof}
Using the same notation as in the proof of Lemma~\ref{spp-1} we have that 
\[
P_{(\ell_0, \w_0)}v=\begin{cases}
x(0, \w_0, 0)-\tilde x(0, \w_0, 0) & \ell_0=0 \\
\bar x(0, \theta^{-\ell_0}\w_0, \ell_0) & \ell_0>0,
\end{cases}
\]
which readily implies the desired conclusion.

\end{proof}

\begin{lemma}
		For each $\lw\in\TWW$ we have that
		\[
			\Phi(1,\ell,\w)\vert_{Z_{\lw}}\colon Z_{\lw}\to Z_{(\ell+1,\theta\w)}
		\]
		is an invertible linear map.
	\end{lemma}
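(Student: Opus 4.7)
The plan is to handle surjectivity and injectivity separately. Surjectivity is essentially free: the definition of $Z_{(\ell+1,\theta\w)}$ together with the cocycle identity immediately yields
\[
Z_{\Theta\lw} = \Phi(\ell+1, 0, \theta^{-\ell}\w)\, Z_{\theta^{-\ell}\w} = \Phi(1,\ell,\w)\bigl(\Phi(\ell, 0, \theta^{-\ell}\w)\, Z_{\theta^{-\ell}\w}\bigr) = \Phi(1,\ell,\w)\, Z_{\lw},
\]
which is precisely the equality recorded just before the statement of Lemma~\ref{spp-1} (with $\ell \geq 1$; the case $\ell = 0$ is obtained by unfolding one step of the analogous definition).

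For injectivity, I would mimic the last part of the proof of Lemma~\ref{spp-1}. Suppose $v \in Z_{\lw}$ with $\Phi(1,\ell,\w)v = 0$. When $\ell \geq 1$, write $v = \Phi(\ell, 0, \theta^{-\ell}\w) z$ with $z \in Z_{\theta^{-\ell}\w} = \ker \Pi(0, \theta^{-\ell}\w)$, and define $\underline{x}\colon \Z^+ \times \tilde\Omega \times \Z^+ \to X$ by
\[
\underline{x}(\ell', \w', n) = \begin{cases}
\Phi(n, 0, \theta^{-\ell}\w) z & (\ell', \w') = (0, \theta^{-\ell}\w),\\
0 & \text{otherwise.}
\end{cases}
\]
The key observation is that $\Phi(\ell+1, 0, \theta^{-\ell}\w) z = \Phi(1,\ell,\w) v = 0$, and hence, by the cocycle property, $\Phi(n, 0, \theta^{-\ell}\w) z = 0$ for every $n \geq \ell+1$. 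Thus $\underline{x}$ has finite support in $n$, and so is trivially bounded; together with $\underline{x}(0, \theta^{-\ell}\w, 0) = z \in \ker \Pi_{(0, \theta^{-\ell}\w)}$ this yields $\underline{x} \in \mathcal Y_C^\Pi(\tilde\Omega)$. A direct check using the cocycle identity shows that $\underline{x}$ satisfies~\eqref{ADM} with $y \equiv 0$. By the uniqueness clause in the admissibility hypothesis, $\underline{x} = \mathcal T 0 = 0$, so $z = 0$ and consequently $v = 0$. The case $\ell = 0$ is strictly simpler: set $\underline{x}(\ell', \w', n) = v$ when $(\ell', \w', n) = (0, \w, 0)$ and $0$ otherwise; since $\Phi(1, 0, \w) v = 0$ the equation~\eqref{ADM} with $y \equiv 0$ is trivially satisfied, and the same uniqueness argument forces $v = 0$.

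The argument involves no serious obstacle: all verifications are routine applications of the cocycle identity, and the only point that requires any attention is confirming that $\underline{x}(\ell', \w', 0) \in \ker \Pi_{(\ell', \w')}$, which is exactly why we chose $z$ from $Z_{\theta^{-\ell}\w}$.
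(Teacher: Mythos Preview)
Your argument is correct. Surjectivity is handled identically to the paper. For injectivity, however, the paper takes a shorter route: if $v\in Z_{\lw}$ and $\Phi(1,\ell,\w)v=0$, then $\Phi(n,\ell,\w)v=0$ for all $n\ge 1$, so trivially $v\in V_{\lw}$; hence $v\in V_{\lw}\cap Z_{\lw}=\{0\}$ by Lemma~\ref{spp-1}. Your construction of $\underline{x}$ and appeal to the uniqueness clause in admissibility is precisely what the proof of Lemma~\ref{spp-1} already did to show $V_{\lw}\cap Z_{\lw}=\{0\}$, so you are re-deriving that step rather than citing it. Both approaches are valid and rest on the same mechanism (uniqueness of the solution to~\eqref{ADM} with $y\equiv 0$); the paper's version is simply more economical because it packages the work into the previously established splitting.
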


	\begin{proof}
		Let $v \in Z_{(\ell+1,\theta\w)}$. By definition, there exists $z \in Z_{\theta^{-\ell}\w}$ such that
		$$v=\Phi(\ell+1,0,\theta^{-\ell}\w)z=\Phi(1,\ell,\w)\Phi(\ell,0,\theta^{-\ell}\w)z.$$ 
  Since $\Phi(\ell,0,\theta^{-\ell}\w)z\in Z_{\lw}$, we have that $\Phi(1,\ell,\w)\vert_{Z_{\lw}}$
		is surjective.

		Assume now that there exists $v \in Z_{\lw}$ such that $\Phi(1,\ell,\w)
		v=0$. Then, $\Phi(n,\ell,\w) v=0$ for all $n\in \mathbb N$. Therefore, we  have $v \in V_{\lw}$. Thus $v\in V_{\lw}\cap Z_{\lw}$, which together with Lemma~\ref{spp-1} implies $v=0$.
	\end{proof}
Observe that it follows directly from the previous lemma that for each $n\in \Z^+$ and $\lw \in \TWW$, $\Phi(n, \ell, \w)\rvert_{Z_{\lw}}\colon Z_{\lw} \to Z_{\Theta^n \lw}$ is invertible.

\begin{lemma}
For $n\in \Z^+$ and $v\in X$, the map
\[
\lw \mapsto \Phi(-n, \ell+n, \theta^n \w)Q_{\Theta^n \lw}v
\]
is measurable, where $\Phi(-n, \ell+n, \theta^n \w)\colon Z_{\Theta^n \lw}\to Z_{\lw}$ denotes the inverse of $\Phi(n, \ell, \w)\rvert_{Z_{\lw}}$ and $Q_{\lw}=\Id_X-P_{\lw}$.

\end{lemma}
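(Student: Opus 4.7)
The plan is to realize $\Phi(-n,\ell+n,\theta^n\w)Q_{\Theta^n(\ell,\w)}v$ as a pointwise limit of measurable quantities produced by the admissibility operator $\mathcal T$ applied to a suitably parametrized family of inputs. When $n=0$ the claim reduces to measurability of $Q_{\lw}v$, which is immediate from Lemma~\ref{measurability}, so I will henceforth assume $n\geq 1$.

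For each integer $R\geq 1$ I would set
\[
Y_R(\ell,\w,k):=\varphi_{\ell+n}\,v\cdot \mathbf 1_{\{k=n\}}\cdot \mathbf 1_{\{\varphi_{\ell+n}\|v\|_{\Theta^n(\ell,\w)}\le R\}}.
\]
Since $\mathcal N$ is a random norm, $Y_R$ is measurable; since $n\ge 1$ it satisfies $Y_R(\ell,\w,0)=0$; and the truncation gives $\|Y_R\|_{\mathcal Y^0(\tilde\Omega)}\le R$, hence $Y_R\in\mathcal Y^0(\tilde\Omega)$. Set $X_R:=\mathcal T Y_R\in \mathcal Y_C^\Pi(\tilde\Omega)$. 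The admissibility equation~\eqref{ADM} decouples across the slices $\lw\in\TWW$, so $X_R(\ell,\w,\cdot)$ is determined on each slice by the single-trajectory equation together with $X_R(\ell,\w,0)\in\ker\Pi_{\lw}$ and the boundedness of $\|X_R(\ell,\w,k)\|_{\Theta^k\lw}$. A standard swap-with-zero argument converts the global uniqueness hypothesis into per-slice uniqueness, and in particular forces $V_{\lw}\cap\ker\Pi_{\lw}=\{0\}$. Hence on slices where the truncation indicator vanishes $X_R(\ell,\w,\cdot)\equiv 0$. On the remaining slices, I would write $X_R(\ell,\w,0)=\alpha_V+\alpha_Z$ according to the splitting $V_{\lw}\oplus Z_{\lw}$ from Lemma~\ref{spp-1}; for $k\ge n$ the formula
\[
X_R(\ell,\w,k)=\Phi(k-n,\ell+n,\theta^n\w)\bigl[\Phi(n,\ell,\w)(\alpha_V+\alpha_Z)+v\bigr],
\]
combined with forward invariance of $V$, $\Phi$-invariance of $Z$, and $V_{\Theta^n\lw}\cap Z_{\Theta^n\lw}=\{0\}$, forces $\Phi(n,\ell,\w)\alpha_Z+Q_{\Theta^n\lw}v=0$. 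The invertibility of $\Phi(n,\ell,\w)|_{Z_{\lw}}$ (previous lemma) then yields $\alpha_Z=-\Phi(-n,\ell+n,\theta^n\w)Q_{\Theta^n\lw}v$, and since $Q_{\lw}X_R(\ell,\w,0)=\alpha_Z$ I would obtain
\[
Q_{\lw}X_R(\ell,\w,0)= -\Phi(-n,\ell+n,\theta^n\w)Q_{\Theta^n\lw}v\cdot \mathbf 1_{\{\varphi_{\ell+n}\|v\|_{\Theta^n\lw}\le R\}}.
\]

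The left-hand side is measurable in $\lw$: $X_R$ is a measurable element of $\mathcal Y_C^\Pi(\tilde\Omega)$, so $\lw\mapsto X_R(\ell,\w,0)$ is a measurable $X$-valued map, and $Q_{\lw}=\Id_X-P_{\lw}$ acts measurably by Lemma~\ref{measurability}. Letting $R\to\infty$, the indicator tends pointwise to $1$, so the right-hand side converges pointwise to $-\Phi(-n,\ell+n,\theta^n\w)Q_{\Theta^n\lw}v$; as a pointwise limit of measurable maps this is measurable, finishing the proof.

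The step I expect to be most delicate is precisely what forces the truncation: the naive untruncated impulse $\varphi_{\ell+n}v\cdot \mathbf 1_{\{k=n\}}$ generally fails to lie in $\mathcal Y^0(\tilde\Omega)$ because $\|v\|_{\Theta^n\lw}$ can be unbounded in $\w$, so one cannot feed it directly into $\mathcal T$. Inserting the double indicator keeps $Y_R$ in $\mathcal Y^0(\tilde\Omega)$ while preserving enough slice-wise structure that uniqueness identifies the $Z$-component of $X_R(\ell,\w,0)$ with the desired pull-back; sending $R\to\infty$ then exhausts $\TWW$ and delivers measurability.
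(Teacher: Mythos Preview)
Your argument is correct and rests on the same underlying idea as the paper's---expressing the backward map through the admissibility operator $\mathcal T$---but the execution differs in a way worth recording. The paper feeds $\mathcal T$ an input supported at a \emph{single} slice $(\ell_0,\w_0)$, namely the impulse $\varphi_{n+\ell_0}\Phi(n,\ell_0,\w_0)Q_{(\ell_0,\w_0)}v$ at time $n$, and then reads off the desired value as $x(\ell_0,\w_0,0)$; since a one-point input is automatically bounded, no truncation is needed, but the measurable dependence on $(\ell_0,\w_0)$ is taken as understood. You instead place the bare impulse $\varphi_{\ell+n}v$ on \emph{every} slice at once, truncate by the indicator $\mathbf 1_{\{\varphi_{\ell+n}\|v\|_{\Theta^n\lw}\le R\}}$ to force membership in $\mathcal Y^0(\tilde\Omega)$, apply $\mathcal T$ a single time, project out the $Z$-component with $Q_{\lw}$, and send $R\to\infty$. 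Your route makes the joint measurability completely explicit (it is a pointwise limit of measurable maps), at the price of the truncation--limit apparatus and the need to apply $Q_{\lw}$ to a varying argument; the paper's route is shorter but leaves implicit precisely the step you flag as delicate. Both are valid within the paper's measurability conventions.
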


\begin{proof}
Without any loss of generality we may assume that $n\ge 1$. Let us fix an arbitrary $(\ell_0, \w_0)\in \TWW$. We define $y\colon \Z^+\times \tilde \Omega \times \Z^+ \to X$ by 
\[
y(\ell, \w, m)=\begin{cases}
\varphi_{n+\ell_0}\Phi(n, \ell_0, \w_0)Q_{ (\ell_0, \w_0)}v & (\ell, \w, m)=(\ell_0, \w_0, n) \\
0 & (\ell, \w, m)\neq (\ell_0, \w_0, n).
\end{cases}
\]
By Lemma~\ref{measurability} we have that $y$ is measurable. Moreover, $y\in \mathcal  Y^0(\tilde \Omega)$. Let $x:=\mathcal Ty\in \mathcal Y_C^\Pi(\tilde \Omega)$. Then, 
\[
\Phi(-n, \ell_0+n, \theta^n \w_0)Q_{\Theta^n (\ell_0, \w_0)}v=x(\ell_0, \w_0, 0),
\]
which implies the desired claim.

\end{proof}
 
 \begin{lemma}
\label{lemma:estimate-cA-X-1} There are $\Theta$-forward invariant random variables $$D_{1},a\colon\Z^+\!\times\tilde\W\to(0,+\infty)$$ such that
		\[
			\|\Phi (n, \ell, \w)v\|_{(\ell+n, \theta^n \w)}\le D_{1}\lw\left(\frac{\mu_{\ell+n}}{\mu_{\ell}}
			\right)^{-a\lw}\|v\|_{(\ell, \w)},
		\]
		for all $(n,\ell,\w)\in\Z^+\times\Z^+\times\tilde\W$ and $v\in V_{\lw}$.
	\end{lemma}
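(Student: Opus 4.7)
The plan is to exploit the bounded admissibility operator $\mathcal T$ from the previous lemma: for each $v \in V_{(\ell_0, \w_0)}$, construct specific inputs $y \in \mathcal Y^0(\tilde\W)$ whose unique solutions $x = \mathcal T y \in \mathcal Y_C^\Pi(\tilde\W)$ encode $\Phi(m, \ell_0, \w_0) v$, and extract decay from $\|x\|_{\mathcal Y_C^\Pi} \le \|\mathcal T\|\|y\|_{\mathcal Y^0}$. In every construction, uniqueness together with $V_{(\ell_0, \w_0)} \cap Z_{(\ell_0, \w_0)} = \{0\}$ forces $x(\ell_0, \w_0, 0) = 0$, so $x$ is an explicit function of $y$ via the cocycle identity. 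The core construction uses $y(\ell_0, \w_0, m) = \Phi(m, \ell_0, \w_0)v$ for $1 \le m \le N$ (zero elsewhere); a short induction gives $x(\ell_0, \w_0, m) = S_{\min(m, N)}\Phi(m, \ell_0, \w_0)v$, where $S_m := \sum_{k=1}^m \mu'_{\ell_0+k}/\mu_{\ell_0+k}$.

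I apply this twice, with ranges tailored to the growth condition~\eqref{eq:COND-minimal-growth-1}. Iterating, let $n_0 := \ell_0$ and $n_{j+1} := q_{n_j}$, so $L_1^j \le \mu_{n_j}/\mu_{\ell_0} \le L_2^j$. First, with $N = n_{j_1} - \ell_0$ where $j_1$ is the smallest integer such that $L_1^{j_1} > \eta$, I estimate $\|y\|_{\mathcal Y^0}$ via~\eqref{eq:teo:exist-dich:exponential-bound-1} (using $\mu_{\ell_0+m}/\mu_{\ell_0} \le L_2^{j_1}$ on the support of $y$) and $S_{n_{j_1} - \ell_0} \ge \log(L_1^{j_1}/\eta) > 0$ via Lemma~\ref{prop:ESTIMATES}. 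Combined with~\eqref{eq:teo:exist-dich:exponential-bound-1} on the finite range $m < n_{j_1} - \ell_0$, this yields $\sup_m \|\Phi(m, \ell_0, \w_0)v\|_{\Theta^m(\ell_0, \w_0)} \le K(\ell_0, \w_0)\|v\|_{(\ell_0, \w_0)}$ for a $\Theta$-forward invariant $K$ depending on $M, C, \lambda, \|\mathcal T\|$ and $\eta, L_1, L_2$. Second, with $N = n_{j_0} - \ell_0$ for a larger integer $j_0$, I now bound $\|y\|_{\mathcal Y^0} \le K\|v\|_{(\ell_0, \w_0)}$ via this boundedness; the inequality $\|\mathcal T\|\|y\|_{\mathcal Y^0} \ge C(\ell_0, \w_0) S_N \|\Phi(N, \ell_0, \w_0)v\|_{\Theta^N(\ell_0, \w_0)}$ then gives
\[
\|\Phi(n_{j_0} - \ell_0, \ell_0, \w_0)v\|_{\Theta^{n_{j_0}-\ell_0}(\ell_0, \w_0)} \le \frac{\|\mathcal T\|K(\ell_0, \w_0)}{C(\ell_0, \w_0)\,(j_0 \log L_1 - \log \eta)}\|v\|_{(\ell_0, \w_0)},
\]
and $j_0$ can be chosen $\Theta$-forward invariantly so large that the coefficient is at most $1/2$.

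It remains to iterate. By $\Theta$-invariance of $K, C$ and forward invariance of $V$, the $1/2$-contraction at scale $n_{j_0} - \ell_0$ reapplies at each shifted base point $\Theta^{n_{kj_0}-\ell_0}(\ell_0, \w_0)$; iterating gives $\|\Phi(n_{kj_0} - \ell_0, \ell_0, \w_0)v\|_{\Theta^{n_{kj_0}-\ell_0}(\ell_0, \w_0)} \le 2^{-k}\|v\|_{(\ell_0, \w_0)}$ for every $k \ge 0$. Setting $a := \log 2 /(j_0 \log L_2)$, the bound $\mu_{n_{(k+1)j_0}}/\mu_{\ell_0} \le L_2^{(k+1)j_0}$ gives $2^{-k} \le 2(\mu_{\ell_0+n}/\mu_{\ell_0})^{-a}$ for any $n$ with $n_{kj_0} - \ell_0 \le n \le n_{(k+1)j_0} - \ell_0$; interpolating intermediate times within each window by~\eqref{eq:teo:exist-dich:exponential-bound-1} and collecting constants produces the stated estimate with $D_1 := 2 M L_2^{j_0(\lambda + a)}$, both $D_1$ and $a$ being $\Theta$-forward invariant. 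The main delicacy is securing the $\Theta$-forward invariance of $K$ in the first step, which requires the support of $y$ to be a growth-condition range $[1, n_{j_1} - \ell_0]$ rather than a fixed interval, so that the resulting constant depends only on $\Theta$-invariant data and on the universal constants $\eta, L_1, L_2$.
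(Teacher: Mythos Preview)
The proposal is correct and follows essentially the same two-step bootstrap as the paper: use the bounded operator $\mathcal T$ on carefully chosen test inputs supported along one trajectory to obtain first a uniform bound $K$, then a fixed-fraction contraction along the $q$-sequence, and finally iterate and interpolate via~\eqref{eq:teo:exist-dich:exponential-bound-1}. The only differences are cosmetic: you place $y$ at $(\ell_0,\w_0)$ rather than at $(0,\theta^{-\ell_0}\w_0)$, use unnormalized inputs $y(\ell_0,\w_0,m)=\Phi(m,\ell_0,\w_0)v$ instead of the paper's normalized ones (which trades a sum of reciprocal norms for the factor $S_N=\sum\varphi_{\ell_0+k}^{-1}$ against $\|y\|_{\mathcal Y^0}$), and obtain a $1/2$-contraction in place of $e^{-1}$; the iteration and interpolation steps are identical in structure.
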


\begin{proof}[Proof of the lemma]
	Fix $(\ell_0, \w_0)\in\TWW$, $n\in\Z^+$ and $v\in V_{(\ell_0, \w_0)}$.  We define $y\colon \Z^+\times \tilde \Omega \times \Z^+\to X$ by 
 \[
 y(\ell, \w, k)=
\dfrac{\Phi(k-\ell_0, \ell_0, \w_0)v}{\|\Phi(k-\ell_0, \ell_0, \w_0)v\|_{\Theta^{k-\ell_0}(\ell_0, \w_0)}},
 \]
 if $(\ell, \w)=(0, \theta^{-\ell_0}\w_0)$, $\ell_0+1\le k\le \ell_0+n$, and $y(\ell, \w, k)=0$ otherwise. Then, $y\in \mathcal Y^0(\tilde \Omega)$. Moreover, set 
 \[
			x(0, \theta^{-\ell_0}\w_0, k)=
			\begin{cases}
				0,        
    & \quad \text{if}\  k\leq \ell_0    \\[2mm]
				\displaystyle \sum_{j=\ell_0+1}^{k} \frac{\Phi(k-\ell_0, \ell_0, \w_0) v}{\varphi_j\|\Phi(j-\ell_0, \ell_0, \w_0)v\|_{\Theta^{j-\ell_0}(\ell_0, \w_0)}}, & \quad \text{if}\ \ell_0< k\le \ell_0+n \\[2mm]
				\displaystyle \sum_{j=\ell_0+1}^{n} \frac{\Phi(k-\ell_0, \ell_0,\w_0) v}{\varphi_j \|\Phi(j-\ell_0, \ell_0, \w_0) v\|_{\Theta^{j-\ell_0}(\ell_0, \w_0)}}, & \quad \text{if}\ k> \ell_0+n,
			\end{cases}
		\]
  and $x(\ell, \w, k)=0$ for $(\ell, \w)\neq (0, \theta^{-\ell_0}\w_0)$. Since $v\in V_{(\ell_0, \w_0)}$, we have that $x\in \mathcal Y_C^\Pi(\tilde \Omega)$. Moreover, a straightforward computation shows that $\mathcal T y=x$.
As $\|y\|_{\mathcal Y(\tilde \Omega)}=1$, we get that 
\begin{equation}
	\label{eq:TmuQ1-1}
   \begin{split}
   \|\mathcal T\| &\ge \|x\|_{\mathcal Y_C^\Pi(\tilde \Omega)} \\
   &\ge C(\ell_0, \w_0) \|x(0, \theta^{-\ell_0}\w_0, \ell_0+n)
		\|_{\Theta^n(\ell_0,\w_0)} \\
   &=C(\ell_0, \w_0)\|\Phi(n, \ell_0, \w_0)
	 v\|_{\Theta^n(\ell_0,\w_0)} \sum_{j=\ell_0+1}^{\ell_0+n} \frac{1}{\varphi_j\|\Phi(j-\ell_0, \ell_0, \w_0)v\|_{\Theta^{j-\ell_0}(\ell_0, \w_0)}}.
  \end{split}
  \end{equation}
    By~\eqref{eq:teo:exist-dich:exponential-bound-1} and~\eqref{eq:TmuQ1-1} we obtain that 
  	\begin{equation*}
   \begin{split}\|\mathcal T\|& \ge C(\ell_0, \w_0) \|\Phi(n, \ell_0, \w_0)
			v\|_{\Theta^n(\ell_0,\w_0)} \sum_{j=\ell_0+1}^{\ell_0+n} \frac{\mu_{\ell_0}^{\lambda(\ell_0, \w_0)}}{\varphi_j M(\ell_0, \w_0)\mu_j^{\lambda(\ell_0, \w_0)}\|v\|_{(\ell_0, \w_0)}}\\
		&	= \frac{C(\ell_0, \w_0)\|\Phi(n, \ell_0, \w_0) v\|_{\Theta^n(\ell_0,\w_0)}}{M(\ell_0, \w_0)\|v\|_{(\ell_0, \w_0)}}\mu_{\ell_0}^{\lambda(\ell_0, \w_0)}\sum_{j=\ell_0+1}^{\ell_0+n} \frac{1}{\varphi_j\mu_j^{\lambda(\ell_0, \w_0)}}.
   \end{split}
		\end{equation*}
  From Lemma~\ref{prop:ESTIMATES} we have
		\[
\sum_{j=\ell_0+1}^{\ell_0+n}\frac{1}{\varphi_j\mu_j^{\lambda(\ell_0, \w_0)}}=\sum_{j=\ell_0+1}^{\ell_0+n}\mu_j^{-(1+\lambda(\ell_0, \w_0))}\mu_j'\geq 
		\frac{1}{\lambda(\ell_0, \w_0)}\left(\mu_{\ell_0+1}^{-\lambda(\ell_0, \w_0)}-\mu_{\ell_0+n+1}^{-\lambda(\ell_0, \w_0)}\right).
		\]
   Therefore, by~\eqref{eq:COND-minimal-growth-1}, if $n \ge q_{\ell_0+1}-\ell_0-1$,
		\[
			\begin{split}
				\|\mathcal T\|&\ge \frac{C(\ell_0, \w_0)\|\Phi(n, \ell_0, \w_0)v\|_{\Theta^n(\ell_0,\w_0)}}{M(\ell_0, \w_0)\lambda (\ell_0, \w_0)\|v\|_{(\ell_0,\w_0)}}\mu_{\ell_0}^{\lambda(\ell_0, \w_0)}
    \left (\mu_{\ell_0+1}^{-\lambda(\ell_0, \w_0)}-\mu_{q_{\ell_0+1}}^{-\lambda(\ell_0, \w_0)}
    \right )\\
    &=\frac{C(\ell_0, \w_0)\|\Phi(n, \ell_0, \w_0)v\|_{\Theta^n(\ell_0,\w_0)}}{M(\ell_0, \w_0)\lambda (\ell_0, \w_0)\|v\|_{(\ell_0,\w_0)}}\frac{\mu_{\ell_0}^{\lambda(\ell_0, \w_0)}}{\mu_{\ell_0+1}^{\lambda(\ell_0, \w_0)} }\left (1-\frac{\mu_{\ell_0+1}^{\lambda(\ell_0, \w_0)}}{\mu_{q_{\ell_0+1}}^{\lambda(\ell_0, \w_0)}} \right )\\
    &\ge \frac{C(\ell_0, \w_0)\|\Phi(n, \ell_0, \w_0)v\|_{\Theta^n(\ell_0,\w_0)}}{M(\ell_0, \w_0)\lambda (\ell_0, \w_0)\|v\|_{(\ell_0,\w_0)}}\frac{\mu_{\ell_0}^{\lambda(\ell_0, \w_0)}}{\mu_{q_{\ell_0}}^{\lambda (\ell_0, \w_0)}}\left (1-\frac{\mu_{\ell_0+1}^{\lambda(\ell_0, \w_0)}}{\mu_{q_{\ell_0+1}}^{\lambda(\ell_0, \w_0)}} \right )\\
    &\ge \frac{C(\ell_0, \w_0)\|\Phi(n, \ell_0, \w_0)v\|_{\Theta^n(\ell_0,\w_0)}}{M(\ell_0, \w_0)\lambda (\ell_0, \w_0)\|v\|_{(\ell_0,\w_0)}}L_2^{-\lambda(\ell_0, \w_0)}(1-L_1^{-\lambda(\ell_0, \w_0)}).
			\end{split}
		\]
  We conclude that, if $n \ge q_{\ell_0+1}-\ell_0-1$, we have
		\[
  \begin{split}
&\|\Phi(n, \ell_0, \w_0)v\|_{\Theta^n (\ell_0, \w_0)} \\
&\le \frac{1}{C(\ell_0, \w_0)}\lambda(\ell_0, \w_0) M (\ell_0, \w_0)L_{2}^{\lambda(\ell_0, \w_0)} L_{1}^{\lambda(\ell_0, \w_0)}(L_{1}^{\lambda(\ell_0, \w_0)}
			-1)^{-1}\|\mathcal T\| \cdot \|v\|_{(\ell_0, \w_0)}.
   \end{split}
		\]
   On the other hand, using~\eqref{eq:teo:exist-dich:exponential-bound-1}, we have, if $n < q_{\ell_0+1}-\ell_0-1$,
  \[
  \begin{split}
  \|\Phi(n, \ell_0, \w_0)v\|_{\Theta^n(\ell_0, \w_0)} &\le M(\ell_0, \w_0) \left (\frac{\mu_{\ell_0+n}}{\mu_{\ell_0}} \right )^{\lambda (\ell_0, \w_0)}\|v\|_{(\ell_0, \w_0)}\\
  &\le M(\ell_0, \w_0)\left (\frac{\mu_{q_{\ell_0+1}}}{\mu_{\ell_0}}\right )^{\lambda (\ell_0, \w_0)}\|v\|_{(\ell_0, \w_0)}\\
  &=M(\ell_0, \w_0)\left (\frac{\mu_{q_{\ell_0+1}}}{\mu_{\ell_0+1}}\right )^{\lambda (\ell_0, \w_0)} \left (\frac{\mu_{\ell_0+1}}{\mu_{\ell_0}}\right )^{\lambda (\ell_0, \w_0)}
  \|v\|_{(\ell_0, \w_0)}\\
  &\le M(\ell_0, \w_0)L_2^{\lambda (\ell_0, \w_0)}\eta^{\lambda (\ell_0, \w_0)}  \|v\|_{(\ell_0, \w_0)}.
  \end{split}
  \]
  We conclude that, for any $n \ge 0$, we have
		\begin{equation}\label{eq:M2-1}
  \|\Phi(n, \ell_0, \w_0)v\|_{\Theta^n(\ell_0,\w_0)} \le M_{2}(\ell_0, \w_0)\|v\|_{(\ell_0, \w_0)},
            \end{equation}
            where $M_2\colon \TWW \to (0, \infty)$ is a $\Theta$-invariant random variable. Taking into account~\eqref{prop:ESTIMATES-est-=1}, for $n > 0$ we have
		\begin{equation}
			\label{eq:bound-importamt-sum-1}
			\begin{split}
				\sum_{j=\ell_0+1}^{\ell_0+n} \frac{1}{\varphi_j}\ge \log \frac{\mu_{\ell_0+n+1}}{\mu_{\ell_0+1}}
				.
			\end{split}
		\end{equation}
By~\eqref{eq:TmuQ1-1}, \eqref{eq:M2-1} and~\eqref{eq:bound-importamt-sum-1},
		we have that 
		\[
			\begin{split}
			\|\mathcal T\| &\ge C(\ell_0, \w_0)\|\Phi(n, \ell_0, \w_0)v\|_{\Theta^n(\ell_0,\w_0)}\sum_{j=\ell_0+1}^{\ell_0+n}\frac{1}{\varphi_j\|\Phi(j-\ell_0, \ell_0, \w_0)v\|_{(j, \theta^{j-\ell_0}\w_0)}}\\
				&\ge \frac{C(\ell_0, \w_0)\|\Phi(n, \ell_0, \w_0)v\|_{\Theta^n(\ell_0,\w_0)}}{M_{2}(\ell_0, \w_0) \|v\|_{(\ell_0, \w_0)}}\log \frac{\mu_{\ell_0+n+1}}{\mu_{\ell_0+1}}
				\\&\ge \frac{C(\ell_0, \w_0)\|\Phi(n,\ell_0, \w_0)v\|_{\Theta^n(\ell_0,\w_0)}}{M_{2}(\ell_0, \w_0) \|v\|_{(\ell_0, \w_0)}}\log \frac{\mu_{\ell_0+n+1}}{\eta \mu_{\ell_0}}
				.\\
			\end{split}
		\]
 Taking into account that
		\[
			\|\mathcal T\| \frac{M_2(\ell_0, \w_0)}{C(\ell_0, \w_0)} \left(\log \frac{\mu_{\ell_0+n+1}}{\eta \mu_{\ell_0}}\right
			)^{-1}\le \e^{-1}\quad 
   \iff \quad \mu_{\ell_0+n+1} \ge N_{0}(\ell_0,\w_0) \mu_{\ell_0},
		\]
		where
		\[
			N_0=N_0(\ell_0, \w_0):=\eta \exp \left \{\e\|\mathcal T\| \frac{M_2(\ell_0, \w_0)}{C(\ell_0, \w_0)} \right \},
		\]
		we conclude that
		\begin{equation}
\label{eq:crucial-1}\|\Phi(n, \ell_0, \w_0)v\|_{\Theta^n(\ell_0,\w_0)} 
\le \e^{-1}\|v\|
			_{(\ell_0, \w_0)},
		\end{equation} 
  for $(\ell_0,\w_0) \in \TWW$ and $n \in \Z^+$ is such that $\mu_{\ell_0+n+1} \ge N_{0}(\ell_0, \w_0)\mu_{\ell_0}$. We also observe that $N_0$ is $\Theta$-forward invariant (since $M_2$ and $C$ are such).

 Take $\lw \in \TWW$. Define $\gamma(\ell,0)=\ell$ and $\gamma(\ell,k)=q_{\gamma(\ell,k-1)}$ for $k\in \N$. Let $K_0=K_0(\ell,\w)$ be the smallest positive integer such that $L_{1}^{K_0(\ell, \w)}\geq N_0(\ell, \w)$. We  observe that $K_0$ is $\Theta$-forward invariant.
		Due to~\eqref{eq:COND-minimal-growth-1}, we have for all $j \in \N$,
		\begin{equation*}
			\begin{split}
				\frac{\mu_{\gamma(\ell,(j+1)K_0(\ell, \w))}}{\mu_{\gamma(\ell,jK_0(\ell, \w))}}= \prod_{k=1}^{K_0(\ell, \w)} \frac{\mu_{\gamma(\ell,jK_0(\ell, \w)+k)}}{\mu_{\gamma(\ell,jK_0(\ell, \w)+k-1)}}\ge L_{1}^{K_0(\ell, \w)}\geq N_0(\ell, \w).
			\end{split}
	\end{equation*}
 For any $n\in\Z^+$ consider the largest $r=r(\ell, \w) \in \Z^+$ with the property that   $\ell+n\ge \gamma(\ell,r K_0)$. Note that $r$ is $\Theta$-forward invariant.
 For $j=0,\ldots,r-1$, set $\hat\gamma(\ell,j)=\gamma(\ell,(j+1)K_0)-\gamma(\ell,jK_0)$. Clearly $$\sum_{j=1}^{r-1}\hat\gamma(\ell,j)=\gamma(\ell,rK_0)-\gamma(\ell,K_0).$$
 Moreover, for all $j=0,\ldots,r-1$ and $u\in X$ we have
   \begin{equation}
		\label{eq:bound-cA-M2-1}
        \begin{split}  &\|\Phi(\hat\gamma(\ell,j),\gamma(\ell,jK_0),\theta^{\gamma(\ell,jK_0)-\ell}\w)u\|_{\Theta^{\gamma(\ell,(j+1)K_0)-\ell}\lw}\\ &
   \qquad\leq  \e^{-1} 
   \|u \|_{\Theta^{\gamma(\ell,jK_0)-\ell}\lw}.
   \end{split}
   \end{equation}
  Using~\eqref{eq:M2-1}, \eqref{eq:crucial-1} and~\eqref{eq:bound-cA-M2-1}
    \begin{equation}
			\label{eq:aux-bound-cAmn-1}
    \begin{split}
    & \|\Phi(n,\ell,\w)v\|_{\Theta^n\lw}\\
    & \,
    = \|\Phi(n+\ell-\gamma(\ell,rK_0),\gamma(\ell,rK_0),\theta^{\gamma(\ell,rK_0)-\ell}\w)\Phi(\gamma(\ell,rK_0)-\ell,\ell,\w)v\|_{\Theta^n\lw}\\
&\le M_2\lw \|\Phi(\gamma(\ell, rK_0)-\ell, \ell, \w)v\|_{\Theta^{\gamma(\ell, rK_0)-\ell}\lw}\\
    &  \,
    \leq M_2 \lw 
    \|
    \Phi(\hat\gamma(\ell,r-1),\gamma(\ell,(r-1)K_0),\theta^{\gamma(\ell,(r-1)K_0)-\ell}\w)\cdots\\
    &\quad\cdots\Phi(\hat\gamma(\ell, 1),\gamma(\ell,K_0),\theta^{\gamma(\ell,K_0)-\ell}\w)\Phi(\hat \gamma(\ell, 0), \ell,\w)v\|_{\Theta^{\gamma(\ell, rK_0)-\ell}\lw}
    \\
    &
    \,\le M_2 \lw  \e^{-r} 
				\|v\|_{\lw}.\\
			\end{split}
   \end{equation}
Noting that
		\begin{equation*}
			\begin{split}
				\dfrac{\mu_{\ell+n}}{\mu_\ell}\leq\frac{\mu_
    {\gamma(\ell,(r+1)K_0)}}{\mu_\ell}=\prod_{j=1}^{r+1} \frac{\mu_{\gamma(\ell,j K_0)}}{\mu_{\gamma(\ell,(j-1) K_0)}}\le
			L_{2}^{(r+1)K_0},
			\end{split}
			\end{equation*}		
		we get
		\begin{equation}
			\label{eq:aux-bound-cAmn-2-1}\e^{-r-1}\leq  \left(\dfrac{\mu_{\ell+n}}{\mu_\ell}\right
			)^{-1/(K_0 \log L_2)}.
		\end{equation}
		It follows from~\eqref{eq:aux-bound-cAmn-1} and~\eqref{eq:aux-bound-cAmn-2-1}
		that, for $n \in\Z^+$,
		\[
			\begin{split}
				\|\Phi(n,\ell,\w)v\|_{\Theta^n\lw}&
        \le \e M_2 \lw   \left(\dfrac{\mu_{\ell+n}}{\mu_\ell}\right
				)^{-1/(K_0\lw \log L_2)}\|v\|_{\lw},
			\end{split}
		\]
		and the statement holds with	 $D_1(\ell,\w)=\e  M_2 \lw$ and $a\lw=(K_{0}\lw \log L_{2})^{-1}$.
   
\end{proof}
\begin{lemma}
		\label{lemma:estimate-cA-Z-1} There are $\Theta$-forward invariant random variables $$D_{2},b\colon\Z^+\!\times\tilde\W\to(0,+\infty)$$ such that 
				\[
		\|\Phi(-n,\ell+n,\theta^{n}\w)v\|_{\lw}
		\le D_{2}\lw\left( \dfrac{\mu_{\ell+n}}{\mu_\ell} \right)^{-b\lw}\|v\|_{\Theta^{n}\lw},\]
				for all $\lw\in\TWW$, $n\in\Z^+$ and $v\in Z_{\Theta^n\lw}$.

	\end{lemma}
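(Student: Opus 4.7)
The plan is to mirror, as closely as possible, the argument of Lemma~\ref{lemma:estimate-cA-X-1}, replacing forward iterates of an element of $V$ by backward iterates of an element of $Z$. Fix $(\ell_0,\w_0)\in\TWW$, $n\in\Z^+$ and $v\in Z_{\Theta^n(\ell_0,\w_0)}$, and set $u=\Phi(-n,\ell_0+n,\theta^n\w_0)v\in Z_{(\ell_0,\w_0)}$. I would take as test function the element of $\mathcal Y^0(\TW)$ supported on the base point $(\ell_0,\w_0)$ and given, for $1\le k\le n$, by
\[
y(\ell_0,\w_0,k)=\frac{\Phi(-(n-k),\ell_0+n,\theta^n\w_0)v}{\|\Phi(-(n-k),\ell_0+n,\theta^n\w_0)v\|_{(\ell_0+k,\theta^k\w_0)}},
\]
so that $\|y\|_{\mathcal Y(\TW)}=1$ and each $y(\ell_0,\w_0,k)$ lies in $Z_{(\ell_0+k,\theta^k\w_0)}=\ker P_{(\ell_0+k,\theta^k\w_0)}$. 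Consequently the stable part of $\mathcal Ty$ vanishes, and the explicit formula from the proof of Theorem~\ref{T51} yields, for $0\le k\le n-1$,
\[
(\mathcal Ty)(\ell_0,\w_0,k)=-\Phi(-(n-k),\ell_0+n,\theta^n\w_0)v\,\sum_{m=k+1}^{n}\frac{\varphi_{\ell_0+m}^{-1}}{\|\Phi(-(n-m),\ell_0+n,\theta^n\w_0)v\|_{(\ell_0+m,\theta^m\w_0)}},
\]
and $(\mathcal Ty)(\ell_0,\w_0,k)=0$ for $k\ge n$. In particular, at $k=0$ we obtain $(\mathcal Ty)(\ell_0,\w_0,0)=-u\,S_0$, where $S_0$ denotes the above sum at $k=0$.

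From here I would proceed in three steps, parallel to the stable lemma. \emph{First}, establish a preliminary boundedness estimate $\|u\|\le M_2'(\ell_0,\w_0)\|v\|$ with $M_2'$ a $\Theta$-forward invariant random variable, by combining a single-impulse test function $y(\ell_0,\w_0,n)=-\varphi_{\ell_0+n}v$ with~\eqref{eq:teo:exist-dich:exponential-bound-1} and~\eqref{eq:COND-minimal-growth-1}, in the same way that~\eqref{eq:M2-1} was extracted from the sum test function in the stable case. \emph{Second}, $\Theta$-invariance of $C$ and $M_2'$ applied at each $\Theta^m(\ell_0,\w_0)$ upgrades this to $\|\Phi(-(n-m),\ell_0+n,\theta^n\w_0)v\|\le M_2'(\ell_0,\w_0)\|v\|$ uniformly in $m$; invoking Lemma~\ref{prop:ESTIMATES} with $\alpha=1$ then yields the lower bound $S_0\ge \log(\mu_{\ell_0+n+1}/(\eta\mu_{\ell_0}))/(M_2'\|v\|)$, and the admissibility estimate $\|\mathcal T\|\ge C(\ell_0,\w_0)\|u\|S_0$ forces $\|u\|\le\e^{-1}\|v\|$ whenever $\mu_{\ell_0+n+1}\ge N_0(\ell_0,\w_0)\mu_{\ell_0}$ for a suitable $\Theta$-forward invariant $N_0$, the exact analogue of~\eqref{eq:crucial-1}.

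\emph{Third}, I would bootstrap the localised decay to every $n$ by the same $q_n$-partitioning used at the end of the proof of Lemma~\ref{lemma:estimate-cA-X-1}: with $K_0(\ell_0,\w_0)$ minimal such that $L_1^{K_0}\ge N_0$, set $\gamma(\ell_0,0)=\ell_0$ and $\gamma(\ell_0,k)=q_{\gamma(\ell_0,k-1)}$, and telescope the factor $\e^{-1}$ across the blocks of length $K_0$, converting the number of blocks into a power of $\mu_{\ell_0+n}/\mu_{\ell_0}$ via~\eqref{eq:COND-minimal-growth-1}. The output will be $\|u\|\le D_2(\ell_0,\w_0)(\mu_{\ell_0+n}/\mu_{\ell_0})^{-b(\ell_0,\w_0)}\|v\|$ with $D_2=\e M_2'$ and $b=(K_0\log L_2)^{-1}$, both $\Theta$-forward invariant by construction. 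The main obstacle is the preliminary boundedness step: unlike in the stable direction, we do not have an a priori upper bound on the backward iterates analogous to~\eqref{eq:teo:exist-dich:exponential-bound-1}, so the crude single-impulse admissibility estimate must be carefully refined via~\eqref{eq:COND-minimal-growth-1} and the $\Theta$-forward invariance of $C$ before it can be fed into the logarithmic estimate of the second step.
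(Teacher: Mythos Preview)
Your plan has a structural gap that prevents it from going through as written. The admissibility space $\mathcal Y_C^\Pi(\tilde\Omega)$ requires the initial value $x(\ell,\w,0)$ to lie in $\ker\Pi(\ell,\w)$, and the projections $\Pi$ are the \emph{abstract} ones supplied by the hypothesis of Theorem~\ref{T52}. Only at $\ell=0$ do we know $\ker\Pi(0,\w)=Z_\w=Z_{(0,\w)}$; for $\ell_0>0$ there is no a~priori relation between $\ker\Pi(\ell_0,\w_0)$ and $Z_{(\ell_0,\w_0)}$. Hence your candidate $x$, supported on the fibre $(\ell_0,\w_0)$ with $x(\ell_0,\w_0,0)=-uS_0\in Z_{(\ell_0,\w_0)}$, cannot be verified to belong to $\mathcal Y_C^\Pi(\tilde\Omega)$, so you cannot conclude $\mathcal Ty=x$. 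The same obstruction kills the single-impulse argument of your first step. Relatedly, you cannot invoke ``the explicit formula from the proof of Theorem~\ref{T51}'': that formula presupposes the dichotomy one is trying to construct; here $\mathcal T$ is only given abstractly, and one must instead \emph{exhibit} a concrete $x\in\mathcal Y_C^\Pi(\tilde\Omega)$ satisfying~\eqref{ADM} and appeal to uniqueness.

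The paper's proof repairs this by working on the fibre $(0,\theta^{-\ell_0}\w_0)$ with a vector $z\in Z_{(0,\theta^{-\ell_0}\w_0)}=\ker\Pi(0,\theta^{-\ell_0}\w_0)$, constructing $y$ and $x$ from \emph{forward} iterates $\Phi(k,0,\theta^{-\ell_0}\w_0)z$ for $1\le k\le\ell_0+s$, and then letting $s\to\infty$. The preliminary step is not an upper bound on backward iterates but the equivalent lower bound $\|\Phi(\ell_0+n,0,\theta^{-\ell_0}\w_0)z\|\ge L(\ell_0,\w_0)\|\Phi(\ell_0,0,\theta^{-\ell_0}\w_0)z\|$, obtained by restricting the infinite sum to $\ell_0+n+1\le j\le q_{\ell_0+n+1}-1$ and bounding each term via~\eqref{eq:teo:exist-dich:exponential-bound-1} \emph{forward} from time $\ell_0+n$. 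After that, your second and third steps (logarithmic estimate and $q_n$-telescoping) are indeed the correct outline; they just have to be executed on the fibre with first coordinate $0$.
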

\begin{proof}[Proof of the lemma]
Take $(\ell_0, \w_0)\in \TWW$ and 
$s\in  \Z^+$.	Let $z \in Z_{(0,\theta^{-\ell_0}\w_0)} \setminus\{0\}$. We define $y\colon \Z^+\times \tilde \Omega \times \Z^+\to X$ by 
\[
y(0, \theta^{-\ell_0}\w_0, k)= 
  \begin{cases}
				-\dfrac{\Phi(k, 0, \theta^
 {-\ell_0}\w_0)z}{\|\Phi(k, 0, \theta^
 {-\ell_0}\w_0)z\|_{\Theta^k(0, \theta^{-\ell_0}\w_0)}} & \quad \text{if }1\le  k\le 
   \ell_0+s \\
				0,                                                     & \quad \text{otherwise,}
    \end{cases}
\]
and $y(\ell, \w, k)=0$ for $(\ell, \w)\neq (0, \theta^{-\ell_0}\w_0)$ and $k\in \Z^+$.
Clearly, $y\in \mathcal Y^0(\tilde \Omega)$. In addition, we define $x\colon \Z^+\times \tilde \Omega \times \Z^+\to X$ by 
\[
			x(0, \theta^{-\ell_0}\w_0, k)=
			\begin{cases}
								\displaystyle \sum_{j=k+1}^{\ell_0+s} \frac{\Phi(k,0,\theta^{-\ell_0}\w_0) z}{\varphi_j\|\Phi(j,0,\theta^{-\ell_0}\w_0)z\|_{\Theta^j(0, \theta^{-\ell_0}\w_0)}} & \quad \text{if}\ 0\le k\le \ell_0+s-1 \\[2mm]
				0, & \quad \text{otherwise, }
			\end{cases}
   \]
   and $x(\ell, \w, k)=0$ for $(\ell, \w)\neq (0, \theta^{-\ell_0}\w_0)$ and $k\in \Z^+$. Then, $x\in \mathcal Y_C^\Pi(\tilde \Omega)$ and it is straightforward to verify that $\mathcal Ty=x$. Consequently, since $\|y\|_{\mathcal Y(\tilde \Omega)}=1$, we get that 
   \[
   \begin{split}\|\mathcal T\| &\ge \|x\|_{\mathcal Y_C^\Pi(\tilde \Omega)} \\
   &\geq C(\ell_0, \w_0)\|\Phi(k, 0, \theta^{-\ell_0}\w_0)
	 z\|_{\Theta^k(0, \theta^{-\ell_0}\w_0)} \sum_{j=k+1}^{\ell_0+s} \frac{1}{\varphi_j\|\Phi(j, 0, \theta^{-\ell_0}\w_0)z\|_{\Theta^j(0, \theta^{-\ell_0}\w_0)}},
  \end{split}
		\]
  for $0\le k\le \ell_0+s-1$. Letting $s\to \infty$ we obtain that for $k \in \Z^+$ and $z \in Z_{(0,\theta^{-\ell_0}\w_0)}\setminus
		\{0\}$,
		\[
		\|\mathcal T\| \ge C(\ell_0, \w_0) \|\Phi(k, 0, \theta^{-\ell_0}\w_0)
 z\|_{\Theta^k(0, \theta^{-\ell_0}\w_0)} \sum_{j=k+1}^{\infty} \frac{1}{\varphi_j\|\Phi(j, 0, \theta^{-\ell_0}\w_0)z\|_{\Theta^j(0, \theta^{-\ell_0}\w_0)}}
			.\]
Thus, 
we have by~\eqref{prop:ESTIMATES-est-le-<0ne1}, ~\eqref{eq:COND-minimal-growth-1}
		and~\eqref{eq:teo:exist-dich:exponential-bound-1} that 
\[
\begin{split}
  & \frac{1}{\|\Phi(\ell_0,0,\theta^{-\ell_0}\w_0)z\|_{(\ell_0, \w_0)}} \\
  &\ge
   \frac{C(\ell_0, \w_0)}{\|\mathcal T\|}\sum_{j=\ell_0+n+1}^{q_{\ell_0+n+1}-1}\frac{1}{\varphi_j \|\Phi(j, 0, \theta^{-\ell_0}\w_0)z\|_{\Theta^j(0, \theta^{-\ell_0}\w_0)}}\\
   &= \frac{C(\ell_0, \w_0)}{\|\mathcal T\|}\sum_{j=\ell_0+n+1}^{q_{\ell_0+n+1}-1}\frac{1}{\varphi_j \|\Phi(j-n-\ell_0, n+\ell_0, \theta^n\w_0)\Phi(n+\ell_0,0,\theta^{-\ell_0}\w_0)z\|_{\Theta^j (0, \theta^{-\ell_0}\w_0)}}\\
   &\ge \frac{C(\ell_0, \w_0)}{M(\ell_0, \w_0)\|\mathcal T\|}\sum_{j=\ell_0+n+1}^{q_{\ell_0+n+1}-1}\frac{1}{\varphi_j (\mu_j/\mu_{n+\ell_0})^{\lambda} \|\Phi(n+\ell_0,0,\theta^{-\ell_0}\w_0)z\|_{\Theta^n(\ell_0,\w_0)}}\\
   &=\frac{C(\ell_0, \w_0)\mu_{n+\ell_0}^{\lambda}}{M(\ell_0, \w_0) \|\mathcal T\| \cdot \|\Phi(n+\ell_0,0, \theta^{-\ell_0}\w_0)z\|_{\Theta^n(\ell_0,\w_0)}}  \sum_{j=\ell_0+n+1}^{q_{\ell_0+n+1}-1}\mu_j^{-\lambda-1}\mu_j'   \\
   &\ge \frac{C(\ell_0, \w_0)\mu_{n+\ell_0}^{\lambda}}{M(\ell_0, \w_0) \lambda \|\mathcal T\| \cdot \|\Phi(n+\ell_0,0, \theta^{-\ell_0}\w_0)z\|_{\Theta^n(\ell_0,\w_0)}}(\mu_{\ell_0+n+1}^{-\lambda}-\mu_{q_{\ell_0+n+1}}^{-\lambda})\\
   &=\frac{C(\ell_0, \w_0)}{M (\ell_0, \w_0)\lambda \|\mathcal T\| \cdot \|\Phi(n+\ell_0,0,\theta^{-\ell_0}\w_0)z\|_{\Theta^n(\ell_0,\w_0)}}\left (\frac{\mu_{n+\ell_0}}{\mu_{\ell_0+n+1}}\right )^{\lambda} \left (1-\left (\frac{\mu_{\ell_0+n+1}}{\mu_{q_{\ell_0+n+1}}}\right )^{\lambda}\right ) \\
   &\ge \frac{C(\ell_0, \w_0)}{M (\ell_0, \w_0)\lambda \|\mathcal T\| \cdot \|\Phi(n+\ell_0,0,\theta^{-\ell_0}\w)z\|_{\Theta^n(\ell_0,\w_0)}}\eta^{-\lambda} (1-L_1^{-\lambda}),
  			\end{split}
			\]	
   for $n\in \Z^+$, where $\lambda=\lambda (\ell_0, \w_0)$. Thus, for any $n\in\Z^+$ we get
		\begin{equation*}
   \|\Phi(\ell_0+n,0,\theta^{-\ell_0}\w_0)z\|_{\Theta^n(\ell_0, \w_0)} \ge L (\ell_0, \w_0)\|\Phi(\ell_0,0,\theta^{-\ell_0}\w_0)z\|_{(\ell_0, \w_0)},			\end{equation*}
		where $L\colon \TWW \to (0, \infty)$ is a $\Theta$-forward invariant random variable.

  Therefore, for all $n\geq1$
\[
\begin{split}
& \frac{1}{\|\Phi(\ell_0,0,\theta^{-\ell_0}\w_0)z\|_{(\ell_0, \w_0)}}\\
 &\ge \frac{C(\ell_0, \w_0)}{\|\mathcal T\|}\sum_{j=\ell_0+1}^{\ell_0+n}\frac{1}{\varphi_j \|\Phi(j,0,\theta^{-\ell_0}\w_0)z\|_{\Theta^j (0, \theta^{-\ell_0}\w_0)}} \\
&\ge \frac{L(\ell_0, \w_0)C(\ell_0, \w_0)}{\|\mathcal T\| \cdot \|\Phi(\ell_0+n,0,\theta^{-\ell_0}\w_0)z\|_{\Theta^n(\ell_0, \w_0)}}\sum_{j=\ell_0+1}^{\ell_0+n} \mu_j^{-1}\mu_j' \\
&\ge \frac{L(\ell_0, \w_0)C(\ell_0, \w_0)}{\|\mathcal T\| \cdot \|\Phi(\ell_0+n,0,\theta^{-\ell_0}\w)z\|_{\Theta^n(\ell_0, \w_0)}} \log \frac{\mu_{\ell_0+n+1}}{\mu_{\ell_0+1}}\\
&\ge \frac{L(\ell_0, \w_0)C(\ell_0, \w_0)}{\|\mathcal T\| \cdot \|\Phi(\ell_0+n,0,\theta^{-\ell_0}\w)z\|_{\Theta^n(\ell_0, \w_0)}} \log \left (\frac{\mu_{\ell_0+n}}{\mu_{\ell_0}}\cdot \frac{\mu_{\ell_0}}{\mu_{\ell_0+1}}\right ) \\
&\ge \frac{L(\ell_0, \w_0)C(\ell_0, \w_0)}{\|\mathcal T\| \cdot \|\Phi(\ell_0+n,0,\theta^{-\ell_0}\w)z\|_{\Theta^n(\ell_0, \w_0)}} \log \left (\frac{\mu_{\ell_0+n}}{\mu_{\ell_0}}\cdot \eta^{-1}\right ).  
\end{split}
 \]
 We conclude that 
\[
\| \Phi(\ell_0+n,0,\theta^{-\ell_0}\w)z\|_{\Theta^n(\ell_0,\w_0)}\ge \e\|\Phi(\ell_0,0,\theta^{-\ell_0}\w)z\|_{(\ell_0, \w_0)},
\]
for $(\ell_0, \w_0) \in \TWW$ and $n\in \N$,
provided that $\mu_{\ell_0+n} \ge N_0\mu_{\ell_0}$, where 
\[
N_0=N_0(\ell_0, \w_0)=\eta \exp \left \{\frac{\e \|\mathcal T\|}{C (\ell_0, \w_0) L(\ell_0, \w_0)} \right \}.
\]
We observe that $N_0\colon \TWW \to (0, \infty)$ is $\Theta$-forward invariant. As in the proof of the previous lemma, let $K_0=K_0 \lw$ be the smallest positive integer such that $L_1^{K_0\lw}\ge N_0\lw$.

Take now arbitrary $\lw \in \TWW$ and $n\in \Z^+$.  Let $\gamma(\ell, 0)=\ell$ and $\gamma(\ell, k)=q_{\gamma(\ell, k-1)}$ for $k\in \N$.
Let $r$ be the largest integer  such that $\ell+n\ge \gamma(\ell, rK_0)$.
Then, by writing $K_0$ instead of $K_0 \lw$ we have that 
\begin{equation}\label{eq:auxx-1}
\begin{split}
&\|\Phi(\ell+n,0,\theta^{-\ell}\w)z\|_{\Theta^n(\ell,\w)}\\
&\ge L \lw \|\Phi(\gamma(\ell, rK_0), 0, \theta^{-\ell}\w)z\|_{(\gamma(\ell, rK_0), \theta^{\gamma(\ell, rK_0)-\ell}\w)} \\
&\ge L\lw \e^ r\|\Phi(\ell,0,\theta^{-\ell}\w)z\|_{\lw}.
\end{split}
\end{equation}
On the other hand, 
\[
\frac{\mu_{\ell+n}}{\mu_\ell}\le \frac{\mu_{\gamma(\ell, (r+1)K_0)}}{\mu_\ell}  \le L_2^{(r+1)K_0 },
\]
which is equivalent to
\[
\e^r\geq \frac1\e\left(\frac{\mu_{\ell+n}}{\mu_\ell}\right)^{1/(K_0   \log L_2)}.
\]
From~\eqref{eq:auxx-1} we conclude that 
\begin{equation}\label{ab-1}
\|\Phi(\ell+n,0,\theta^{-\ell}\w)z\|_{\Theta^n(\ell,\w)}\ge \frac{L\lw} \e  \left(\frac{\mu_{\ell+n}}{\mu_\ell}\right )^{1/(K_0  \log L_2)}\|\Phi(\ell, 0, \theta^{-\ell}\w)z\|_{\lw},
\end{equation}
for $n\in\Z^+$ and $z\in Z_{(0,\theta^{-\ell}\w)}$.

We are now in a position to complete the proof of the lemma. Take $v\in Z(\ell+n, \theta^n \w)$. Then, there exists $z\in Z_{(0,\theta^{-\ell}\w)}$ such that 
\[
v=\Phi(\ell+n, 0, \theta^{-\ell}\w)z.
\]
Note that 
\[
\Phi(-n, \ell+n, \theta^{\ell+n} \w)v=\Phi(-n, \ell+n, \theta^{\ell+n} \w)\Phi(\ell+n, 0, \theta^{-\ell}\w)z=\Phi(\ell, 0, \theta^{-\ell}\w)z.
\]
Hence, \eqref{ab-1} implies that 
\[
\begin{split}
&\|\Phi(-n, \ell+n, \theta^{\ell+n} \w)v\|_{(\ell, \w)} \\
&=\|\Phi(\ell, 0, \theta^{-\ell}\w)z\|_{(\ell, \w)} \\
&\le \frac{e}{L\lw }\left (\frac{\mu_{\ell+n}}{\mu_\ell}\right )^{-1/(K_0 \log L_2)}\|\Phi(\ell+n, 0, \theta^{-\ell}\w)z\|_{\Theta^n \lw }\\
&=\frac{e}{L\lw }\left (\frac{\mu_{\ell+n}}{\mu_\ell}\right )^{-1/(K_0 \log L_2)}\|v\|_{\Theta^n \lw},
\end{split}
\]
which gives the desired conclusion.
\end{proof}

\begin{lemma}
There exists a $\Theta$-forward invariant random variable $\bar C\colon \TWW \to (0, \infty)$ such that 
\[
\|P_{\lw}v\|_{\lw} \le \bar C \lw \|v\|_{\lw}, \quad \lw \in \TWW \ \text{and} \ v\in X.
\]
\end{lemma}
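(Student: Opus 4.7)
The plan is to combine the contraction estimate on $V$ from Lemma~\ref{lemma:estimate-cA-X-1} with the expansion estimate on $Z$ from Lemma~\ref{lemma:estimate-cA-Z-1} and the a priori bound~\eqref{eq:teo:exist-dich:exponential-bound-1} to derive a self-referential inequality on $\|Q_{\lw} v\|_{\lw}$, and then close it by choosing $n$ via the $\gamma$-construction of Lemma~\ref{lemma:estimate-cA-X-1} so that the resulting bound is $\Theta$-forward invariant.

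For $\lw\in\TWW$ and $v\in X$, write $v=P_{\lw}v+Q_{\lw}v$ with $Q_{\lw}v\in Z_{\lw}$. Since $\Phi(n,\ell,\w)\vert_{Z_{\lw}}\colon Z_{\lw}\to Z_{\Theta^n\lw}$ is invertible, I have $Q_{\lw}v=\Phi(-n,\ell+n,\theta^n\w)\Phi(n,\ell,\w)Q_{\lw}v$, so Lemma~\ref{lemma:estimate-cA-Z-1} applied to the vector $\Phi(n,\ell,\w)Q_{\lw}v\in Z_{\Theta^n\lw}$ gives
\[
\|Q_{\lw}v\|_{\lw}\le D_{2}\lw\left(\frac{\mu_{\ell+n}}{\mu_\ell}\right)^{-b\lw}\|\Phi(n,\ell,\w)Q_{\lw}v\|_{\Theta^n\lw}.
\]
Splitting $\Phi(n,\ell,\w)Q_{\lw}v=\Phi(n,\ell,\w)v-\Phi(n,\ell,\w)P_{\lw}v$, using the triangle inequality, bounding the first summand by~\eqref{eq:teo:exist-dich:exponential-bound-1} and the second by Lemma~\ref{lemma:estimate-cA-X-1} (valid since $P_{\lw}v\in V_{\lw}$), and finally inserting $\|P_{\lw}v\|_{\lw}\le\|v\|_{\lw}+\|Q_{\lw}v\|_{\lw}$, I obtain
\[
\|Q_{\lw}v\|_{\lw}\le D_{2}M\Bigl(\frac{\mu_{\ell+n}}{\mu_\ell}\Bigr)^{\lambda-b}\|v\|_{\lw}+D_{1}D_{2}\Bigl(\frac{\mu_{\ell+n}}{\mu_\ell}\Bigr)^{-a-b}\bigl(\|v\|_{\lw}+\|Q_{\lw}v\|_{\lw}\bigr),
\]
where all suppressed arguments are $\lw$.

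To close this, I choose $n$ so that the coefficient of $\|Q_{\lw}v\|_{\lw}$ on the right-hand side is at most $1/2$, but in a $\Theta$-forward invariant manner. Following the construction in Lemma~\ref{lemma:estimate-cA-X-1}, set $\gamma(\ell,0)=\ell$, $\gamma(\ell,k)=q_{\gamma(\ell,k-1)}$, and let $K=K\lw$ be the smallest positive integer with $L_{1}^{K(a+b)}\ge 2D_{1}D_{2}$. Since $a,b,D_{1},D_{2}$ are $\Theta$-forward invariant, so is $K$; and~\eqref{eq:COND-minimal-growth-1} yields $L_{1}^{K}\le \mu_{\gamma(\ell,K)}/\mu_\ell\le L_{2}^{K}$. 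Setting $n=\gamma(\ell,K)-\ell$ makes $D_{1}D_{2}(\mu_{\ell+n}/\mu_\ell)^{-a-b}\le 1/2$, while $(\mu_{\ell+n}/\mu_\ell)^{\lambda-b}$ is dominated by the $\Theta$-forward invariant quantity $\max(L_{1},L_{2})^{K|\lambda-b|}$. Solving the closed inequality gives
\[
\|Q_{\lw}v\|_{\lw}\le\bigl[2 D_{2}M\max(L_{1},L_{2})^{K|\lambda-b|}+1\bigr]\|v\|_{\lw},
\]
and then $\|P_{\lw}v\|_{\lw}\le\|v\|_{\lw}+\|Q_{\lw}v\|_{\lw}\le \bar C\lw\|v\|_{\lw}$ with $\bar C\lw:=2+2D_{2}M\max(L_{1},L_{2})^{K|\lambda-b|}$, which is $\Theta$-forward invariant because every random variable appearing in it is.

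The principal obstacle is precisely the $\Theta$-forward invariance. The integer $n=\gamma(\ell,K)-\ell$ is \emph{not} itself $\Theta$-forward invariant, since the iteration $q_{\gamma(\ell,k-1)}$ depends on $\ell$. The key trick is that only the ratio $\mu_{\ell+n}/\mu_\ell$ enters the final estimate, and by~\eqref{eq:COND-minimal-growth-1} this ratio is trapped between $L_{1}^{K}$ and $L_{2}^{K}$ independently of $\ell$ once $K$ is fixed—so the bound depends on $\lw$ only through $K, a, b, D_{1}, D_{2}, M, \lambda$, all of which are $\Theta$-forward invariant.
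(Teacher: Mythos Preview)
Your proof is correct, and it takes a genuinely different route from the paper's.

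The paper invokes the ``minimal gap'' inequality $\|P_{\lw}\|_{\lw}\le 2/\zeta\lw$ (quoted from~\cite{Mi.Ra.Sc.1998}), where $\zeta\lw=\inf\{\|u+z\|_{\lw}:u\in V_{\lw},\ z\in Z_{\lw},\ \|u\|_{\lw}=\|z\|_{\lw}=1\}$, and then bounds $\zeta\lw$ from below by estimating $\|\Phi(n,\ell,\w)(u+z)\|_{\Theta^n\lw}$ from above via~\eqref{eq:teo:exist-dich:exponential-bound-1} and from below by $\|\Phi(n,\ell,\w)z\|-\|\Phi(n,\ell,\w)u\|$ using Lemmas~\ref{lemma:estimate-cA-X-1} and~\ref{lemma:estimate-cA-Z-1}, choosing $n$ along the same $\gamma$-iteration you use. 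Your argument bypasses the minimal-gap lemma entirely: you bound $\|Q_{\lw}v\|_{\lw}$ directly by a self-referential inequality and then close it by making the coefficient of $\|Q_{\lw}v\|_{\lw}$ at most $1/2$. Both approaches rest on the same three ingredients (the two dichotomy lemmas and the growth bound) and the same $\gamma$-construction to select $n$, and both yield an explicit $\Theta$-forward invariant $\bar C$. Your version is more self-contained since it does not need the external reference, and it makes the dependence of $\bar C$ on the data slightly more transparent; the paper's approach has the minor conceptual advantage of isolating the geometric content (the angle between $V$ and $Z$) in the quantity~$\zeta$.
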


\begin{proof}[Proof of the lemma]
Let $\|P_{\lw}\|_{\lw}:=\sup_{\|v\|_{\lw}\le 1}\|P_{\lw}v\|_{\lw}$. It follows from the proof of Lemma~4.2 in~\cite{Mi.Ra.Sc.1998}
	that
	\begin{equation}
		\label{eq:prelim-bound-Pn-1}\|P_{\lw}\|_{\lw} \le 2/\zeta{\lw},
	\end{equation}
	where
	\[
		\zeta{\lw}=\inf \left\{ \|u+z\|_{\lw}: \ (u,z) \in V_{\lw}\times Z_{\lw}, \ 
		\|u\|_{\lw}=\|z\|_{\lw}=1 \right\}.
	\]
 From~\eqref{eq:prelim-bound-Pn-1} it suffices to prove that there exists a $\Theta$-forward invariant random variable $c\colon \TWW \to (0, \infty)$ such that \begin{equation}\label{gammac}\zeta{\lw}\ge c\lw\quad \text{ for $\lw \in \TWW$.}\end{equation} Let $(u,z) \in V_{\w}\oplus Z_{\w}$ with $\|u\|_\w=\|z\|_\w=1$. Using~\eqref{eq:teo:exist-dich:exponential-bound-1},
		we conclude that
  \[
			\|\Phi(n, \ell, \w)(u+z)\|_{\Theta^n(\ell,\w)} \le M \lw \left(\frac{\mu_{n+\ell}}{\mu_\ell}\right)^{\lambda \lw } \|u+z\|_{\lw}.
		\]
  Thus, using Lemma~\ref{lemma:estimate-cA-X-1} and Lemma~\ref{lemma:estimate-cA-Z-1},
		we obtain, for $n \ge 0$,
		\begin{equation}\label{942-1}
			\begin{split}
				&\|u+z\|_{\lw}\\
    &\ge \frac{1}{M\lw \left(\frac{\mu_{n+\ell}}{\mu_\ell}\right)^{\lambda \lw}}\|\Phi(n, \ell, \w)(u+z)\|_{\Theta^n(\ell,\w)}
				\\
				&\ge \frac{1}{M\lw \left(\frac{\mu_{n+\ell}}{\mu_\ell}\right)^{\lambda \lw}}\left(\|\Phi(n,\ell, \w)z\|_{\Theta^n(\ell,\w)}-\|\Phi(n, \ell, \w)u\|_{\Theta^n(\ell,\w)}\right)\\
				&\ge \frac{1}{M\lw \left(\frac{\mu_{n+\ell}}{\mu_\ell}\right)^{\lambda \lw}}\left(\frac{1}{D_{1}\lw }
				\left(\frac{\mu_{n+\ell}}{\mu_\ell}\right)^{a \lw}-D_{1}\lw \left(\frac{\mu_{n+\ell}}{\mu_\ell}\right)^{-a \lw}\right),
			\end{split}
		\end{equation}
where without any loss of generality we  assumed that $D_1=D_2$ and $a=b$.  

Set $\gamma(\ell, 0):=q_\ell$ and $\gamma(\ell, k):=q_{\gamma(\ell, k-1)}$ for $k\ge 1$.
Let $m\colon \TWW \to \N$ be a forward $\Theta$-invariant random variable such that 
\[
\frac{1}{D_{1}\lw }L_1^{a\lw m\lw}-D_1 \lw L_2^{-a\lw m\lw}>0.
\]
By applying~\eqref{942-1} for $n=\gamma(\ell, m \lw)-\ell$, we obtain that~\eqref{gammac} holds with
\[
c\lw :=\frac{1}{M \lw L_2^{\lambda \lw m\lw }}\left (\frac{1}{D_{1}\lw }L_1^{a\lw m\lw}-D_1 \lw L_2^{-a\lw m\lw}\right ).
\]

\end{proof}

\section{Robustness}\label{se:robustness}
In this section, we discuss the robustness of $\mu$-dichotomies for NRDS.
\begin{theorem}
Let $\mu$ be a growth rate satisfying the assumptions of Theorem~\ref{T52}.
Let $\Phi, \Psi \colon \Z^+\times \Z^+\times \Omega \times X\to X$ be two NRDS such that $\Phi$ admits a $\mu$-dichotomy with respect to a random norm $\mathcal N$. In addition,  assume that there are $\Theta$-forward invariant random variables $\lambda\colon\TWW\to(0,+\infty)$ and $M\colon\TWW\to[1,+\infty)$ such that~\eqref{eq:teo:exist-dich:exponential-bound-1} holds.
Then, there exists a $\Theta$-forward invariant random variable $c\colon \TWW \to (0, \infty)$ such that if 
\begin{equation}\label{rob}
\|(\Phi(1, \ell, \w)-\Psi(1, \ell, \w))v\|_{\Theta \lw}\le \frac{c\lw}{\varphi_{\ell+1}} \|v\|_{\lw} 
\end{equation}
for $\lw \in \TWW$ and $v\in X$, then $\Psi$  admits a $\mu$-dichotomy with respect to the random norm $\mathcal N$.
\end{theorem}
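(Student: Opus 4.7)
The plan is to deduce the $\mu$-dichotomy for $\Psi$ by invoking Theorem~\ref{T52}. Its hypotheses require admissibility of $\Psi$ with respect to some family of projections, together with the exponential-type growth bound~\eqref{eq:teo:exist-dich:exponential-bound-1}. Both will be inherited from $\Phi$ via the smallness condition~\eqref{rob}, with admissibility of $\Phi$ supplied by Theorem~\ref{T51} and the growth bound for $\Phi$ given by hypothesis.

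\textbf{Growth bound for $\Psi$.} Starting from the variation-of-parameters identity
\[
\Psi(n,\ell,\w)=\Phi(n,\ell,\w)+\sum_{k=0}^{n-1}\Phi(n-k-1,\ell+k+1,\theta^{k+1}\w)(\Psi-\Phi)(1,\ell+k,\theta^k\w)\Psi(k,\ell,\w),
\]
which is verified by induction from the cocycle identities, I take $\|\cdot\|_{\Theta^n\lw}$-norms, apply~\eqref{eq:teo:exist-dich:exponential-bound-1} to the $\Phi$-factors and~\eqref{rob} to the perturbations, and use the $\Theta$-forward invariance of $M$, $\lambda$ and $c$. Setting $h_n:=(\mu_{\ell+n}/\mu_\ell)^{-\lambda(\ell,\w)}\|\Psi(n,\ell,\w)v\|_{\Theta^n\lw}$ and collecting terms leads to a discrete Gronwall inequality of the form
\[
h_n\le M(\ell,\w)\|v\|_\lw+M(\ell,\w)c(\ell,\w)\sum_{k=0}^{n-1}\frac{\mu_{\ell+k+1}'}{\mu_{\ell+k+1}}h_k.
\]
Iterating and applying the logarithmic estimate~\eqref{prop:ESTIMATES-est-=1} of Lemma~\ref{prop:ESTIMATES} yields $\|\Psi(n,\ell,\w)v\|_{\Theta^n\lw}\le M'(\ell,\w)(\mu_{\ell+n}/\mu_\ell)^{\lambda'(\ell,\w)}\|v\|_\lw$ with $M',\lambda'$ $\Theta$-forward invariant, so that $\Psi$ satisfies~\eqref{eq:teo:exist-dich:exponential-bound-1}.

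\textbf{Admissibility for $\Psi$.} I keep the projections $\Pi:=P$ coming from the $\mu$-dichotomy of $\Phi$ and denote by $\mathcal T_\Phi\colon\mathcal Y^0(\tilde\Omega)\to\mathcal Y_C^{P}(\tilde\Omega)$ the bounded operator from Theorem~\ref{T51}. For $y\in\mathcal Y^0(\tilde\Omega)$, solving the $\Psi$-admissibility problem is equivalent to finding a fixed point of
\[
F_y(x):=\mathcal T_\Phi\bigl(y+\mathcal L(x)\bigr),\qquad \mathcal L(x)(\ell,\w,n+1):=\varphi_{\ell+n+1}(\Psi-\Phi)(1,\ell+n,\theta^n\w)x(\ell,\w,n),
\]
with $\mathcal L(x)(\ell,\w,0):=0$, since adding $\varphi_{\ell+n+1}^{-1}\mathcal L(x)(\ell,\w,n+1)$ on the right-hand side of~\eqref{ADM} converts the $\Phi$-equation into the $\Psi$-equation. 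Combining~\eqref{rob} with the $\Theta$-forward invariance of $c$ gives $\|\mathcal L(x)(\ell,\w,n+1)\|_{\Theta^{n+1}\lw}\le c(\ell,\w)\|x(\ell,\w,n)\|_{\Theta^n\lw}$. The admissibility problem decouples along orbits $\{\Theta^n(\ell_0,\w_0)\}_n$; moreover, the explicit construction in the proof of Theorem~\ref{T51} shows that the orbit-restricted operator norm of $\mathcal T_\Phi$ (from the sup-norm to the $C$-weighted sup-norm) is at most $1$. Choosing $c$ $\Theta$-forward invariant with $c(\ell,\w)\le C(\ell,\w)/2$ therefore makes $F_y$ a contraction on $\mathcal Y_C^{P}(\tilde\Omega)$ along each orbit, and Banach's fixed point theorem yields a unique measurable $x\in\mathcal Y_{C'}^{P}(\tilde\Omega)$ with $C':=C/2$ satisfying the $\Psi$-admissibility equation; the condition $x(\ell,\w,0)\in\ker P_\lw$ is preserved by $\mathcal T_\Phi$. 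Theorem~\ref{T52} then applies to $\Psi$ and produces the desired $\mu$-dichotomy.

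\textbf{Main obstacle.} The central difficulty is that $c$, $C$, $D$, $\lambda$, $M$, and the operator-norm bound for $\mathcal T_\Phi$ are all $\Theta$-forward invariant random variables, not uniform constants. Both the Gronwall step and the contraction argument must therefore be carried out orbit-by-orbit, and $c$ must be calibrated pointwise against $C(\ell,\w)/\|\mathcal T_\Phi^{(\ell,\w)}\|$; a careless global choice of $c$ would fail to yield a contraction uniformly in $(\ell,\w)$. A secondary concern is the measurability of the fixed point, which is obtained by observing that the Picard iterates stay measurable and converge pointwise on each orbit.
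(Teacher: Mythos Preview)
Your proposal is correct and follows essentially the same route as the paper: establish the growth bound~\eqref{eq:teo:exist-dich:exponential-bound-1} for $\Psi$ via variation of parameters plus discrete Gronwall and Lemma~\ref{prop:ESTIMATES}, then obtain admissibility for $\Psi$ by a Banach fixed-point argument built from the operator $\mathcal T=\mathcal T_\Phi$ of Theorem~\ref{T51}, and finally apply Theorem~\ref{T52}.

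One remark on your ``Main obstacle'' paragraph: the orbit-by-orbit decoupling and the concern about calibrating $c$ against an $(\ell,\w)$-dependent operator norm are unnecessary complications. The paper works globally: $\mathcal T\colon\mathcal Y^0(\tilde\Omega)\to\mathcal Y_C^\Pi(\tilde\Omega)$ is a bounded linear operator between Banach spaces, so $\|\mathcal T\|$ is a single constant (indeed $\le 1$ by the explicit formula in Theorem~\ref{T51}, as you observe), and the map $x\mapsto\mathcal T(\mathcal G x)$ is a contraction on all of $\mathcal Y_C^\Pi(\tilde\Omega)$ once $c(\ell,\w)\le\rho\,C(\ell,\w)$ with $\rho\|\mathcal T\|<1$. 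No orbit-wise argument or separate measurability check of Picard iterates is needed; the fixed point lies in $\mathcal Y_C^\Pi(\tilde\Omega)$ itself (your passage to $C'=C/2$ is harmless but superfluous). Similarly, the paper imposes the extra constraint $c(\ell,\w)<\frac{1}{2M(\ell,\w)}$ only to obtain a clean non-random exponent in the Gronwall step; your version with the $\Theta$-forward invariant $\lambda'=\lambda+\eta Mc$ works just as well for invoking Theorem~\ref{T52}.
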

\begin{proof}
Let $C\colon \TWW \to (0, \infty)$ and projections $\Pi\lw$, $\lw \in \TWW$ be given by Theorem~\ref{T51}. Let $c\colon \TWW \to (0, \infty)$ be an $\Theta$-forward random variable such that 
\[
c \lw \le \rho C\lw \quad \text{and} \quad c\lw <\frac{1}{2M\lw},
\]
for $\lw \in \TWW$,
where $\rho>0$ is such that $\rho \|\mathcal T\|<1$ and  $\mathcal T\colon \mathcal Y^0(\tilde \Omega)\to \mathcal Y_C^\Pi(\tilde \Omega)$ is the linear operator introduced in the proof of Theorem~\ref{T52}.

Take an arbitrary $y\in \mathcal Y^0(\tilde \Omega)$. We define $\mathcal G\colon \mathcal Y_C^\Pi(\tilde \Omega)\to \mathcal Y^0(\tilde \Omega)$ by 
\[
(\mathcal G x)(\ell, \w, n):=
\begin{cases}
0 &n=0 \\
\varphi_{\ell+n}\Psi_\Phi(1, n+\ell-1, \theta^{n-1}\w)x(\ell, \w, n-1)+y(\ell, \w, n) & n>0,
\end{cases}
\]
for $(\ell, \w, n)\in \Z^+\times \tilde \Omega \times \Z^+$ where
\[
\Psi_\Phi(n, \ell, \w):=\Psi(n, \ell, \w)-\Phi(n, \ell, \w).
\]
Note that for $x\in \mathcal Y_C^\Pi(\tilde \Omega)$  and $(\ell, \w, n)\in \Z^+\times \tilde \Omega\times \Z^+$ with $n>0$ we have using~\eqref{rob} that 
\[
\|(\mathcal G x)(\ell, \w, n)\|_{\Theta^n \lw}\le \rho C\lw \|x(\ell, \w, n-1)\|_{\Theta^{n-1}\lw}+\|y\|_{\mathcal Y^0(\tilde \Omega)},
\] 
which implies that 
\[
\|\mathcal Gx\|_{\mathcal Y^0(\tilde \Omega)}\le \rho \|x\|_{\mathcal Y_C^\Pi(\tilde \Omega)}+\|y\|_{\mathcal Y^0(\tilde \Omega)}.
\]
In particular, $\mathcal G$ is well-defined. Moreover, the same argument shows that 
\begin{equation}\label{contr1}
   \|\mathcal G x_1-\mathcal G x_2\|_{\mathcal Y^0(\tilde \Omega)}\le \rho \|x_1-x_2\|_{\mathcal Y_C^\Pi(\tilde \Omega)}, \quad \text{for $x_1, x_2\in \mathcal Y_C^\Pi(\tilde \Omega)$.}
\end{equation}
Set
\[
\mathcal F:=\mathcal T\circ \mathcal G \colon \mathcal Y_C^\Pi(\tilde \Omega)\to \mathcal Y_C^\Pi(\tilde \Omega).
\]
By~\eqref{contr1} and since $\rho \|\mathcal T\|<1$, we conclude that $\mathcal F$ is a contraction on $\mathcal Y_C^\Pi(\tilde \Omega)$. Consequently, there exists a unique $x\in \mathcal Y_C^\Pi(\tilde \Omega)$ such that $x=\mathcal T (\mathcal G x)$. Taking into account the definition of $\mathcal T$, we see that 
\[
\begin{split}
 x(\ell, \w, n+1)&=\Phi(1, n+\ell, \theta^n \w)x(\ell, \w, n)+\Psi_\Phi(1, n+\ell, \theta^n \w)x(\ell, \w, n)\\
 &\phantom{=}+\varphi_{n+\ell+1}^{-1}y(\ell, \w, n+1),
\end{split}
\]
which implies that 
\[
x(\ell, \w, n+1)=\Psi(1, n+\ell, \theta^n \w)x(\ell, \w, n)+\varphi_{n+\ell+1}^{-1}y(\ell, \w, n+1),
\]
for $(\ell, \w, n)\in \Z^+\times \tilde \Omega \times \Z^+$. Hence, $x$ satisfies~\eqref{ADM} with $\Psi$ instead of $\Phi$. The uniqueness of $x$ follows from the uniqueness of the fixed point of $\mathcal F$.

Next, it is easy to show that for $\lw \in \TWW$ and $n\in \Z^+$ that 
\[
\begin{split}
\Psi(n, \ell, \w) &=\Phi(n, \ell, \w)\\
&\phantom{=}+\sum_{j=0}^{n-1}\Phi(n-1-j, j+\ell+1, \theta^{j+1}\w)\Psi_\Phi(1, j+\ell, \theta^j \w)
\Psi(j, \ell, \w).
\end{split}
\]
Hence, for arbitrary $v\in X$ we have that 
\[
\begin{split}
&\|\Psi(n, \ell, \w)v\|_{\Theta^n \lw} \\
&\le M\lw \left (\frac{\mu_{\ell+n}}{\mu_\ell}\right )^{\lambda \lw}\|v\|_{\lw}\\
&\phantom{\le}+M\lw c\lw \sum_{j=0}^{n-1}\left (\frac{\mu_{\ell+n}}{\mu_{j+\ell+1}}\right )^{\lambda \lw}\varphi_{j+\ell+1}^{-1}\|\Psi(j, \ell, \w)v\|_{\Theta^j \lw}.
\end{split}
\]
By using discrete Gronwall's inequality and~\eqref{prop:ESTIMATES-est-=1} we conclude that 
\[
\begin{split}
\|\Psi(n, \ell, \w)v\|_{\Theta^n \lw } &\le M\lw \left (\frac{\mu_{\ell+n}}{\mu_\ell}\right )^{\lambda \lw} e^{M\lw c\lw \sum_{j=\ell+1}^{n+\ell}\varphi_j^{-1}}\|v\|_{\lw} \\
&\le M\lw \left (\frac{\mu_{\ell+n}}{\mu_\ell}\right )^{\lambda \lw}\left (\frac{\mu_{\ell+n+1}}{\mu_{\ell+1}}\right )^{\eta M\lw c\lw}\|v\|_{\lw} \\
&\le M\lw \left (\frac{\mu_{\ell+n}}{\mu_\ell}\right )^{\lambda \lw}\left (\frac{\mu_{\ell+n+1}}{\mu_{\ell}}\right )^{\frac 1 2 \eta }\|v\|_{\lw} \\
&\le M\lw \eta^{\frac 1 2 \eta}\left (\frac{\mu_{\ell+n}}{\mu_{\ell}}\right )^{\lambda \lw +\frac 1 2 \eta}\|v\|_{\lw}.
\end{split}
\]
We conclude that there exists a  $\Theta$-forward invariant random variable $N\colon \TWW \to [1, \infty)$ such that 
\[
\|\Psi (n, \ell, \w)v\|_{\Theta^n \lw}\le N\lw \left(\frac{\mu_{\ell+n}}{\mu_\ell}\right)^{\lambda\lw +\frac 1 2 \eta}\|v\|_{\lw},
\]
for $\lw \in \TWW$ and $v\in X$.
By Theorem~\ref{T52} we conclude that $\Psi$ admits $\mu$-dichotomy with respect to the random norm $\mathcal N$.

\end{proof}

	\section{Relation with random nonuniform $(\mu,\nu)$-dichotomy}\label{se:mu_vs_munu}

Let $\mu,\nu\colon\Z^+\to\R^{+}$
	be growth rates, with $\nu_n\ge 1$ for each $n\in \Z^+$, and let $\mathcal N$ be a random norm. We say that a linear
	NRDS $\Phi$ admits a \textit{random nonuniform $(\mu,\nu)$-dichotomy} if there exist a $\theta$-invariant subset $\tilde{\Omega}\subset\Omega$ of full measure, a family of projections $
P=\{P_{(\ell,\omega)}: \lw \in \TWW\}$ satisfying $P_1)-P_4)$ and
	 $\Theta$-forward invariant random variables $\lambda, D, \varepsilon\colon\TWW\to(0,+\infty)$
such that for every $(n,\ell,\w)\in\Z^+\x\Z^+\x\tilde\W$  we have
	\begin{equation}
		\label{eq:dich-mu-nu-1}\|\Phi(n,\ell,\w) P_{\lw}\|\le D\lw \left( \dfrac{\mu_{\ell+n}}
		{\mu_\ell}\right)^{-\lambda\lw}\nu_\ell^{\varepsilon\lw},
	\end{equation}
	and for $\lw \in \TWW$ and $0\le n\le \ell$
	\begin{equation}
		\label{eq:dich-mu-nu-2}\|\Phi(-n,\ell, \w)Q_{\lw }\|\le D\lw \left( \dfrac
		{\mu_{\ell}}{\mu_{\ell-n}}\right)^{-\lambda\lw}\nu_\ell^{\varepsilon\lw}. 
\end{equation}

\begin{remark}	Notice that~\eqref{eq:dich-mu-nu-2} is equivalent to say that for all
$\lw\in\TWW$ and $0\leq n\leq \ell$ we have
\begin{equation*}
		\|\Phi(-n,\ell+n,\theta^n\w)Q_{\Theta^n(\ell,\w)}\|\le 
		D\lw\left(\frac{\mu_{\ell+n}}{\mu_\ell}\right)^{-\lambda\lw}\nu_\ell^{\varepsilon\lw}.
  	\end{equation*}
\end{remark}


If
	additionally there are $\Theta$-forward invariant  random variables $K,b,\gamma\colon\TWW\to(0,+\infty)$ with $b\ge \lambda$   such that
	\begin{equation}
		\label{eq:nonunif-strong-dich}\|\Phi(n,\ell,\w)Q_{\lw}\| \le K\lw \left(\frac{\mu_{\ell+n}}{\mu_\ell}\right
		)^{b\lw} \nu_\ell^{\gamma\lw},
	\end{equation}
	we say that $\Phi$   admits
	a \textit{strong random nonuniform $(\mu,\nu)$-dichotomy}.

\begin{example}
We have an \textit{exponential random nonuniform dichotomy}  with respect to a random $\mathcal N$ if it admits a $(\mu,\nu)$-dichotomy with $\mu_n=\nu_n=\e^{n}$. Conditions~\eqref{eq:dich-mu-nu-1} and~\eqref{eq:dich-mu-nu-2} become
\begin{equation*}
		\|\Phi(n,\ell,\w) P_{\lw}\|\le D\lw e^{-\lambda\lw n+\ell\varepsilon\lw}
	\end{equation*}
	for every $(n,\ell,\w)\in\Z^+\x\Z^+\x\tilde\W$ , and for $\lw \in \TWW$ and $0\le n\le \ell$,
	\begin{equation*}
		\| \Phi(-n,\ell, \w)Q_{\lw }\|\le D\lw e^{\lambda\lw n+\ell\varepsilon\lw}. 
\end{equation*}
For the polinomial case, we may consider $\mu_n=\nu_n=n+1$, and for a logarithmic-type of nonuniform $(\mu,\nu)$-dichotomy we can take $\mu_n = \nu_n= \log(n+1)$.

Example~\ref{ex:exist:dich} can be easily adapted to obtain random nonuniform $(\mu,\nu)$-dichotomies for given growth rates $\mu$ and $\nu$. 

\end{example}

The following is a central result of this section.
 \begin{theorem}
The following properties are equivalent:
\begin{enumerate}
\item[(i)] $\Phi$ admits a strong random nonuniform $(\mu, \nu)$-dichotomy;
\item[(ii)] $\Phi$ admits a $\mu$-dichotomy with respect to a random norm $\mathcal{N}$, with the property that there exist a $\theta$-invariant subset $\tilde{\Omega} \subset \Omega$ of full measure and $\Theta$-forward invariant random variables $\bar{K}, \bar{\varepsilon} \colon \TWW \to (0, +\infty)$ such that, for all $\lw \in \TWW$, we have
\begin{equation}\label{LN1}
    \|v\| \le \|v\|_ {\lw} \le \bar K\lw \nu_\ell^{\bar {\varepsilon} \lw}\|v\| .
\end{equation}
Moreover, there are  $\Theta$-forward invariant random variables $\bar{\lambda}\colon\TWW\to(0,+\infty)$ and $M\colon\TWW\to[1,+\infty)$ such that
\begin{equation}
			\label{boundedgrowth}
                \|\Phi(n,\ell,\w)v\|_{\Theta^n\lw}\le
			M(\ell,\w)\left(\frac{\mu_{\ell+n}}{\mu_\ell}\right)^{\bar{\lambda}\lw}\|v\|_{\lw},
		\end{equation}
		for all $(n,\ell,\w)\in\Z^+\x \Z^+    \x\tilde\W$ and $v\in X$.
\end{enumerate}

 \end{theorem}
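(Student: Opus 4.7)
The plan is to prove the two implications separately. The direction (ii) $\Rightarrow$ (i) reduces to direct norm translation; the direction (i) $\Rightarrow$ (ii) requires constructing a Lyapunov-type random norm.

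For (ii) $\Rightarrow$ (i), I would pass from the $\mu$-dichotomy estimates with respect to $\mathcal N$ to those in the original norm using the sandwich bound in \eqref{LN1}. For the stable estimate, applying $\|\cdot\|\leq\|\cdot\|_{\Theta^n\lw}$ on the left of \eqref{eq:dich-1} and $\|v\|_\lw\leq\bar K\lw\nu_\ell^{\bar\varepsilon\lw}\|v\|$ on the right gives $\|\Phi(n,\ell,\w)P_\lw v\|\leq D\lw\bar K\lw\nu_\ell^{\bar\varepsilon\lw}(\mu_{\ell+n}/\mu_\ell)^{-\lambda\lw}\|v\|$; the same chain applied to \eqref{eq:dich-2-b} yields \eqref{eq:dich-mu-nu-2}. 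For the strong (forward unstable) part, evaluating \eqref{eq:dich-1} at $n=0$ shows $\|P_\lw v\|_\lw\leq D\lw\|v\|_\lw$ and hence $\|Q_\lw v\|_\lw\leq(1+D\lw)\|v\|_\lw$; combining this with \eqref{boundedgrowth} applied to $Q_\lw v$ and with \eqref{LN1} produces $\|\Phi(n,\ell,\w)Q_\lw v\|\leq M\lw(1+D\lw)\bar K\lw(\mu_{\ell+n}/\mu_\ell)^{\bar\lambda\lw}\nu_\ell^{\bar\varepsilon\lw}\|v\|$. Replacing $\bar\lambda$ by $\max(\bar\lambda,\lambda)$ preserves $\Theta$-forward invariance and secures $b\geq\lambda$.

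For (i) $\Rightarrow$ (ii), I would define the Lyapunov random norm
\[
\|v\|_\lw:=\sup_{k\geq 0}\left(\frac{\mu_{\ell+k}}{\mu_\ell}\right)^{\lambda'\lw}\|\Phi(k,\ell,\w)P_\lw v\|+\sup_{k\geq 0}\left(\frac{\mu_{\ell+k}}{\mu_\ell}\right)^{-b'\lw}\|\Phi(k,\ell,\w)Q_\lw v\|,
\]
with $\Theta$-forward invariant $\lambda'$ and $b'$ satisfying $0<\lambda'<\lambda$ and $b'>b$. Using the strong $(\mu,\nu)$-dichotomy together with Lemma~\ref{prop:ESTIMATES}, both sups are finite and yield $\|v\|_\lw\leq(D\lw+K\lw)\nu_\ell^{\max(\varepsilon\lw,\gamma\lw)}\|v\|$, while evaluating at $k=0$ and applying the triangle inequality gives $\|v\|_\lw\geq\|P_\lw v\|+\|Q_\lw v\|\geq\|v\|$, verifying \eqref{LN1}. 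Measurability follows from the countable sup structure, and a reindexing of the sups establishes \eqref{boundedgrowth} with $\bar\lambda=b'$ and $M=2$. The stable part of the $\mu$-dichotomy with respect to $\mathcal N$ is obtained by a forward shift of indices, yielding $\|\Phi(n,\ell,\w)P_\lw v\|_{\Theta^n\lw}\leq(\mu_{\ell+n}/\mu_\ell)^{-\lambda'\lw}\|v\|_\lw$.

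The main obstacle will be verifying the backward unstable condition \eqref{eq:dich-2} with respect to $\mathcal N$. Setting $u:=\Phi(-n,\ell+n,\theta^n\w)Q_{\Theta^n\lw}v\in\im Q_\lw$ and splitting the defining sup of $\|u\|_\lw$ at $k=n$, the contribution from $k\geq n$ reindexes cleanly to $(\mu_{\ell+n}/\mu_\ell)^{-b'\lw}\|v\|_{\Theta^n\lw}$, while the contribution from $0\leq k<n$ is controlled by the backward $(\mu,\nu)$-estimate and yields a residual bounded by $D\lw\nu_{\ell+n}^{\varepsilon\lw}(\mu_{\ell+n}/\mu_\ell)^{-\lambda\lw}\|v\|$; the source-time factor $\nu_{\ell+n}^{\varepsilon\lw}$ is not directly absorbed by the \eqref{LN1} lower bound $\|v\|_{\Theta^n\lw}\geq\|v\|$. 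To overcome this, I would augment the Lyapunov norm with a backward-image sup of the form $\sup_{0\leq m\leq\ell}(\mu_\ell/\mu_{\ell-m})^{\lambda'\lw}\|\Phi(-m,\ell,\w)Q_\lw v\|$, which is bounded above by $D\lw\nu_\ell^{\varepsilon\lw}\|v\|$ via \eqref{eq:dich-mu-nu-2} (so \eqref{LN1} is preserved) and whose presence, combined with the forward sups, furnishes the additional lower-bound control on $\|v\|_{\Theta^n\lw}$ needed to absorb the troublesome $\nu_{\ell+n}^{\varepsilon\lw}\|v\|$ term into a forward-invariant multiple of $(\mu_{\ell+n}/\mu_\ell)^{-\widetilde\lambda\lw}\|v\|_{\Theta^n\lw}$, closing the argument.
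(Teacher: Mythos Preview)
Your proposal is correct and, once you carry out the augmentation you describe, coincides with the paper's construction: the paper's Lyapunov norm is precisely the three-term sup norm (forward sup on $P_{\lw}v$, backward sup on $Q_{\lw}v$ over $0\le m\le\ell$, and forward sup on $Q_{\lw}v$), and the implication (ii)$\Rightarrow$(i) is handled exactly as you outline. Two minor simplifications are available: the shifted exponents $\lambda'<\lambda$ and $b'>b$ are unnecessary---the paper uses $\lambda$ and $b$ themselves and the sups are finite directly from \eqref{eq:dich-mu-nu-1}, \eqref{eq:dich-mu-nu-2}, \eqref{eq:nonunif-strong-dich} (Lemma~\ref{prop:ESTIMATES} is not invoked here); and the backward unstable estimate \eqref{eq:dich-2} follows from a straight term-by-term reindexing of the sups in $\|u\|_{\lw}$ against those in $\|v\|_{\Theta^n\lw}$ (the computation \eqref{calc} in the paper), so no $\nu_{\ell+n}^{\varepsilon}\|v\|$ intermediary ever appears.
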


 \begin{proof}
$(i)\implies (ii)$. Suppose that $\Phi$ admits a strong random nonuniform $(\mu,\nu)$-dichotomy. For $\lw \in \TWW$ and $v\in X$, set
\[
\|v\|_{\lw}:=\|v\|_{\lw}^{(s)}+\|v\|_{\lw}^{(u)},
\]
where
\[
\|v\|_{\lw}^{(s)}:=\sup_{n\ge 0} \left (\|\Phi(n, \ell, \w)P_{\lw}v\|  \left (\frac{\mu_{\ell+n}}{\mu_\ell}\right )^{\lambda \lw} \right )
\]
and 
\[
\begin{split}
\|v\|_{\lw}^{(u)} :&=\sup_{0\le n\le \ell}\left(\|\Phi(-n,\ell, \w)Q_{\lw }v\|\left( \dfrac
		{\mu_{\ell}}{\mu_{\ell-n}}\right)^{\lambda\lw}\right)\\
  &\phantom{=}+\sup_{n\ge 1}\left (\|\Phi(n, \ell, \w)Q_{\lw}v\|\left(\frac{\mu_{\ell+n}}{\mu_\ell}\right
		)^{-b\lw} \right).
  \end{split}
\]
We observe that it follows from~\eqref{eq:dich-mu-nu-1}, \eqref{eq:dich-mu-nu-2} and~\eqref{eq:nonunif-strong-dich} that 
\[
\|v\|_{\lw}\le 2D\lw \nu_{\ell}^{\varepsilon \lw}\|v\|+K\lw \nu_\ell^{\gamma \lw}\|v\|.
\]
Since, on the other hand
\[
\|v\|_{\lw}\ge \|P_{\lw}v\| +\|Q_{\lw}v\| \ge \|v\|,
\]
we conclude that~\eqref{LN1} holds with
\[
\bar{\varepsilon} \lw:=\max \{\varepsilon \lw , \gamma \lw \} \quad \text{and} \quad \bar K \lw:=2D\lw +K\lw.
\]
Next, for $(n,\ell,\w) \in \Z^+\times\Z^+\times \tilde \Omega$ and $v\in X$, we have that 
\[
\begin{split}
&\|\Phi(n, \ell, \w)P_{\lw} v\|_{\Theta^n \lw}  \\
&=
\sup_{m\ge 0}\left( \|\Phi(m, \ell+n, \theta^n \w)\Phi(n, \ell, \w)P_{\lw}v\|
\left (\frac{\mu_{\ell+n+m}}{\mu_{\ell+n}} \right )^{\lambda \lw}\right )
 \\
&=\sup_{m\ge 0} \left(  \|\Phi(m+n, \ell, \w)P_{\lw}v\| 
\left (\frac{\mu_{\ell+n+m}}{\mu_{\ell+n}} \right )^{\lambda \lw}\right ) \\
&=\left (\frac{\mu_{\ell+n}}{\mu_\ell}\right )^{-\lambda \lw}\sup_{m\ge 0} \left(  \|\Phi(m+n, \ell, \w)P_{\lw}v\| 
\left (\frac{\mu_{\ell+n+m}}{\mu_{\ell}} \right )^{\lambda \lw}\right ) \\
&\le \left (\frac{\mu_{\ell+n}}{\mu_\ell}\right )^{-\lambda \lw}\|v\|_{\lw}.
\end{split}
\]
Therefore, 
\begin{equation}\label{11}
\|\Phi(n, \ell, \w)P_{\lw} v\|_{\Theta^n \lw} \le \left (\frac{\mu_{\ell+n}}{\mu_\ell}\right )^{-\lambda \lw}\|v\|_{\lw}, 
\end{equation}
for $(n,\ell, \w)\in \Z^+\times \Z^+\times \tilde \Omega$ and $v\in X$.
Moreover,  for $n\le \ell$ we have (using that $b\ge \lambda$) that 
\begin{equation}\label{calc}
\begin{split}
&\|\Phi(-n, \ell, \w)Q_{\lw}v\|_{(\ell-n, \theta^{-n}\w)} \\
&=\sup_{0\le m\le \ell-n}\left (\|
\Phi(-m, \ell-n, \theta^{-n}\w)\Phi(-n, \ell, \w)Q_{\lw}v\| \left (\frac{\mu_{\ell-n}}{\mu_{\ell-n-m}}\right)^{\lambda \lw} \right )
\\
&\phantom{=}+\sup_{m\ge 1}\left (
\|\Phi(m, \ell-n, \theta^{-n}\w)\Phi(-n, \ell, \w)Q_{\lw}v\| \left (\frac{\mu_{\ell-n+m}}{\mu_{\ell-n}}
\right)^{-b \lw}
\right )\\
&=\sup_{0\le m\le \ell-n}\left (\|
\Phi(-(m+n), \ell, \w)Q_{\lw}v\| \left (\frac{\mu_{\ell-n}}{\mu_{\ell-n-m}}\right)^{\lambda \lw} \right )
\\
&\phantom{=}+\sup_{m\ge 1}\left (
\|\Phi(m-n, \ell, \w)Q_{\lw}v\| \left (\frac{\mu_{\ell-n+m}}{\mu_{\ell-n}}
\right)^{-b \lw}\right )\\
&\le \left (\frac{\mu_\ell}{\mu_{\ell-n}}\right )^{-\lambda \lw} \sup_{n\le k\le \ell}\left (\|
\Phi(-k, \ell, \w)Q_{\lw}v\| \left (\frac{\mu_{\ell}}{\mu_{\ell-k}}\right)^{\lambda \lw} \right )\\
&\phantom{\le}+\sup_{1\le m\le n}\left (
\|\Phi(m-n, \ell, \w)Q_{\lw}v\| \left (\frac{\mu_{\ell-n+m}}{\mu_{\ell-n}}
\right)^{-\lambda \lw}\right )\\
&\phantom{\le}+\sup_{m>n}\left (
\|\Phi(m-n, \ell, \w)Q_{\lw}v\| \left (\frac{\mu_{\ell-n+m}}{\mu_{\ell-n}}
\right)^{-b \lw}\right )\\
&=\left (\frac{\mu_\ell}{\mu_{\ell-n}}\right )^{-\lambda \lw} \sup_{n\le k\le \ell}\left (\|
\Phi(-k, \ell, \w)Q_{\lw}v\| \left (\frac{\mu_{\ell}}{\mu_{\ell-k}}\right)^{\lambda \lw} \right )\\
&\phantom{=}+\left (\frac{\mu_\ell}{\mu_{\ell-n}}\right )^{-\lambda \lw}\sup_{0\le k\le n-1}\left (\| \Phi(-k, \ell, \w)Q_{\lw}v\|\left (\frac{\mu_{\ell-k}}{\mu_{\ell}}\right )^{-\lambda \lw}\right )\\
&\phantom{=}+\left (\frac{\mu_\ell}{\mu_{\ell-n}}\right)^{-\lambda \lw}
\sup_{k\ge 1}
\left (
\|\Phi(k, \ell, \w)Q_{\lw}v\| \left (\frac{\mu_{\ell+k}}{\mu_{\ell}}
\right)^{-b \lw}\right )\\
&\le 2\left (\frac{\mu_\ell}{\mu_{\ell-n}}\right )^{-\lambda \lw}\|v\|_{\lw}.
\end{split}
\end{equation}
Consequently,
\begin{equation}\label{22}
    \|\Phi(-n, \ell, \w)Q_{\lw}v\|_{(\ell-n, \theta^{-n}\w)}\le 2\left (\frac{\mu_\ell}{\mu_{\ell-n}}\right )^{-\lambda \lw}\|v\|_{\lw},  
\end{equation}
for all $(n, \ell, \w)\in \Z^+\times \Z^+\times \tilde \Omega$, with $n\le \ell$, and $v\in X$. From~\eqref{11} and~\eqref{22} it follows that $\Phi$ admits $\mu$-dichotomy with respect to the random norm $\mathcal N$ given by $\mathcal N(\lw,\cdot)=\| \cdot \|_{\lw}$ for all $\lw\in\TW$.

We now establish~\eqref{boundedgrowth}. Proceeding similarly to~\eqref{calc}, one finds that 
\begin{equation}\label{33}
    \|\Phi(n, \ell, \w)Q_{\lw}v\|_{\Theta^n \lw} \le 2\left (\frac{\mu_{\ell+n}}{\mu_\ell}\right )^{b \lw}\|v\|_{\lw},
\end{equation}
for all $(n, \ell, \w)\in \Z^+\times \Z^+\times \tilde \Omega$ and $v\in X$. By~\eqref{11} and~\eqref{33} we conclude that~\eqref{boundedgrowth} holds with $M \lw =3$ and $\overline \lambda=b$.

$(ii) \implies (i)$ By~\eqref{eq:dich-1} and~\eqref{LN1} we have for $(n,\ell, \w)\in \Z^+\times \Z^+\times \tilde \Omega$ and $v\in X$ that 
\[
\begin{split}
\|\Phi(n,\ell,\w) P_{\lw} v\| &\le \|\Phi(n,\ell,\w) P_{\lw} v\|_{\Theta^n(\ell,\w)} \\
&\le D\lw\left(\frac{\mu_{\ell+n}}{\mu_\ell}\right)^{-\lambda\lw}\|v\|_{\lw} \\
&\le D\lw  \bar K \lw \left(\frac{\mu_{\ell+n}}{\mu_\ell}\right)^{-\lambda\lw}\nu_\ell^{\bar {\varepsilon} \lw} \|v\|,
\end{split}
\]
yielding~\eqref{eq:dich-mu-nu-1}. Similarly, \eqref{eq:dich-2-b} and~\eqref{LN1} give that
\[
\begin{split}
\|\Phi(-n,\ell, \w)Q_{\lw}v\| &\le \|\Phi(-n,\ell,\w)Q_{\lw}v\|_{\Theta^n \lw} \\
&\le D\lw \left(\frac{\mu_\ell}{\mu_{\ell-n}}\right)^{-\lambda \lw}\|v\|_{\lw}\\
&\le D\lw \bar K\lw \left(\frac{\mu_\ell}{\mu_{\ell-n}}\right)^{-\lambda \lw}\nu_\ell^{\bar {\varepsilon} \lw} \|v\|,
\end{split}
\]
for $\lw \in \TWW$, $0\le n\le \ell$ and $v\in X$. Hence, \eqref{eq:dich-mu-nu-2} holds.
Finally, \eqref{LN1}, \eqref{boundedgrowth} together with~\eqref{eq:dich-2} (applied for $n=0$)
\[
\begin{split}
    \|\Phi(n,\ell,\w)Q_{\lw}v\| &\le \|\Phi(n,\ell,\w)Q_{\lw}v\|_{\Theta^n \lw} \\
    &\le M(\ell,\w)\left(\frac{\mu_{\ell+n}}{\mu_\ell}\right)^{\bar{\lambda}\lw}\|Q_{\lw}v\|_{\lw} \\
    &\le D\lw M(\ell,\w)\left(\frac{\mu_{\ell+n}}{\mu_\ell}\right)^{\bar{\lambda}\lw}\|v\|_{\lw}\\
    &\le  D\lw M(\ell,\w)\bar K \lw \left(\frac{\mu_{\ell+n}}{\mu_\ell}\right)^{\bar{\lambda}\lw}\nu_\ell^{\bar {\varepsilon} \lw} \|v\|,
\end{split}
\]
for $(n, \ell, \w)\in \Z^+\times \Z^+\times \tilde \Omega$ and $v\in X$. Consequently, \eqref{eq:nonunif-strong-dich} holds. The proof of the theorem is completed.
\end{proof}

  \section*{Acknowledgments}
D.~Dragi\v cevi\' c was supported in part by University of Rijeka under the project uniri-iskusni-prirod-23-98 3046. C.~Silva and H. Vilarinho were partially supported by FCT through CMA-UBI (project UIDB/MAT/00212/2020).


\begin{thebibliography}{99}

\bibitem{Arnold_1998} 
L. Arnold, \emph{Random Dynamical Systems}, Springer, 1998. \url{https://doi.org/10.1007/978-3-662-12878-7}

\bibitem{MD1}
M. Abu Alhawala and D. Dragi\v cevi\' c, \emph{On spectral characterization of nonuniform hyperbolicity}, J. Funct. Spaces (2017), Art. ID 6707649, 8 pp.
\url{https://doi.org/10.1155/2017/6707649}

\bibitem{MD2}
M. Abu Alhawala and D. Dragi\v cevi\' c, \emph{New conditions for (non)uniform behaviour of linear cocycles over flows}, J. Math. Anal. Appl. \textbf{473} (2019), 367--381.
\url{https://doi.org/10.1016/j.jmaa.2018.12.054}

\bibitem{BD}
L. Backes and D. Dragi\v{c}evi\'{c}, \emph{A characterization of $(\mu, \nu)$-dichotomies via admissibility}, preprint. \url{https://arxiv.org/abs/2406.04126}

\bibitem{BDV0}
L. Barreira, D. Dragi\v{c}evi\'{c}, and C. Valls, \emph{Nonuniform hyperbolicity and admissibility}, Adv. Nonlinear Stud. \textbf{14} (2014), 791--811.
\url{https://doi.org/10.1515/ans-2014-0315}

\bibitem{Barreira_Dragicevic_Valls_2015}
L. Barreira, D. Dragi\v{c}evi\'{c}, and C. Valls, \emph{Admissibility for exponential dichotomies in average}, Stochastics Dyn. \textbf{15} (2015), no. 3. \url{https://doi.org/10.1142/S0219493715500148}

\bibitem{Barreira_Dragicevic_Valls_2016a}
L. Barreira, D. Dragi\v{c}evi\'{c}, and C. Valls, \emph{Tempered exponential dichotomies and Lyapunov exponents for perturbations}, Commun. Contemp. Math. \textbf{18} (2016), no. 05, 1550058. \url{https://doi.org/10.1142/S0219199715500583}

\bibitem{Barreira_Dragicevic_Valls_2016}
L. Barreira, D. Dragi\v{c}evi\'{c}, and C. Valls, \emph{Tempered exponential dichotomies: admissibility and stability under perturbations}, Dyn. Syst. \textbf{31} (2016), no. 4, 525--545. \url{https://doi.org/10.1080/14689367.2016.1159663}

\bibitem{Barreira_Dragicevic_Valls_2016_flow}
L. Barreira, D. Dragi\v{c}evi\'{c}, and C. Valls, \emph{Exponential dichotomies in average for flows and admissibility}, Publ. Math. Debrecen \textbf{89} (2016), no. 4, 415--439. \url{https://doi.org/10.5486/PMD.2016.7355}

\bibitem{BDV1}
L. Barreira, D. Dragi\v{c}evi\'{c}, and C. Valls, \emph{Nonuniform hyperbolicity and one-sided admissibility}, Atti Accad. Naz. Lincei Rend. Lincei Mat. Appl. \textbf{27} (2016), 235--247.
\url{https://ems.press/journals/rlm/articles/13734}

\bibitem{BDV2}
L. Barreira, D. Dragi\v{c}evi\'{c}, and C. Valls, \emph{Admissibility on the half line for evolution families}, J. Anal. Math. \textbf{132} (2017), 157--176.
\url{https://doi.org/10.1007/s11854-017-0017-4}

\bibitem{B.D.V.AH.2018}
L. Barreira, D. Dragi\v{c}evi\'{c}, and C. Valls, \emph{Admissibility and Hyperbolicity}, Springerbriefs in Mathematics, Springer, 2018.

\bibitem{Ba.Va.2008-2}
L. Barreira and C. Valls, \emph{Growth rates and nonuniform hyperbolicity}, Discrete Contin. Dyn. Syst. \textbf{22} (2008), 509--528.
\url{https://www.aimsciences.org/article/doi/10.3934/dcds.2008.22.509}

\bibitem{Barreira_Valls_2014}
L. Barreira and C. Valls, \emph{Robustness of exponential dichotomies in mean}, Stochastic Process. Appl. \textbf{124} (2014), no. 12, 4244--4265.
\url{https://doi.org/10.1016/j.spa.2014.08.002}

\bibitem{Bates_Lu_Wang_2014}
P. W. Bates, K. Lu, and B. Wang, \emph{Attractors of non-autonomous stochastic lattice systems in weighted spaces}, Phys. D \textbf{289} (2014), 32--50. \url{https://doi.org/10.1016/j.physd.2014.08.005}

\bibitem{Bento_Vilarinho_2021}
A. J. G. Bento and H. Vilarinho, \emph{Invariant manifolds for random dynamical systems on Banach spaces exhibiting generalized dichotomies}, J. Dyn. Differ. Equ. \textbf{33} (2021), 111--133. \url{https://doi.org/10.1007/s10884-020-09888-7}

\bibitem{Caraballo_Carvalho_Langa_Oliveira-Sousa_2022b}
T. Caraballo, J. A. Langa, A. N. Carvalho, and A. N. Oliveira-Sousa, \emph{Continuity and topological structural stability for nonautonomous random attractors}, Stochastics Dyn. \textbf{22} (2022), no. 7, 2240018.
\url{https://doi.org/10.1142/S021949372240024X}


\bibitem{Caraballo_Carvalho_Langa_Oliveira-Sousa_2021b}
T. Caraballo, A. N. Carvalho, J. A. Langa, and A. N. Oliveira-Sousa, \emph{The effect of a small bounded noise on the hyperbolicity for autonomous semilinear differential equations}, J. Math. Anal. Appl. \textbf{502} (2021), no. 1, 125134. \url{https://doi.org/10.1016/j.jmaa.2021.125134}


\bibitem{Caraballo_Langa_2003}
T. Caraballo and J. Langa, \emph{On the upper semicontinuity of cocycle attractors for non-autonomous and random dynamical systems}, Dyn. Contin. Discrete Impuls. Syst. Ser. A Math. Anal. \textbf{10} (2003), 491--513.

\bibitem{Chemnitz_Dragicevic_2024}
R. Chemnitz and D. Dragi\v{c}evi\'{c}, \emph{Characterizing nonuniform hyperbolicity by Mather-type admissibility}, \url{https://arxiv.org/abs/2409.14809}

\bibitem{Cherubini_Lamb_Rasmussen_Sato_2017}
A. M. Cherubini, J. S. W. Lamb, M. Rasmussen, and Y. Sato, \emph{A random dynamical systems perspective on stochastic resonance}, Nonlinearity \textbf{30} (2017), no. 7, 2835--2851. \url{https://doi.org/10.1088/1361-6544/aa72bd}

\bibitem{Chow_Leiva_1995}
S.-N. Chow and H. Leiva, \emph{Existence and roughness of the exponential dichotomy for skew-product semiflow in Banach spaces}, J. Differ. Equ. \textbf{120} (1995), no. 2, 429--477. \url{https://doi.org/10.1006/jdeq.1995.1125}

\bibitem{CL}
C. Chicone and Y. Latushkin, \emph{Evolution Semigroups in Dynamical Systems and Differential Equations}, American Mathematical Society, 1999.

\bibitem{CS}
C. V. Coffman and J. J. Sch\"affer, \emph{Dichotomies for linear difference equations}, Math. Ann. \textbf{172} (1967), 139--166.
\url{https://doi.org/10.1007/BF01350095}

\bibitem{Co}
W. Coppel, \emph{Dichotomies in Stability Theory}, Lecture Notes in Mathematics 629, Springer, New York, 1981.

\bibitem{Crauel_Debussche_Flandoli_1997}
H. Crauel, A. Debussche, and F. Flandoli, \emph{Random attractors}, J. Dyn. Differ. Equ. \textbf{9} (1997), no. 2, 307--341. \url{https://doi.org/10.1007/BF02219225}

\bibitem{Cong_1997}
N. D. Cong, \emph{Topological Dynamics of Random Dynamical Systems}, Clarendon Press, Oxford, 1997.

\bibitem{Crauel_Kloeden_Yang_2011}
H. Crauel, P. E. Kloeden, and M. Yang, \emph{Random attractors of stochastic reaction-diffusion equations on variable domains}, Stochastics Dyn. \textbf{11} (2011), no. 2-3, 215--226. \url{https://doi.org/10.1142/S0219493711003292}

\bibitem{Cui_Kloeden_2018}
H. Cui and P. E. Kloeden, \emph{Invariant forward attractors of non-autonomous random dynamical systems}, J. Differ. Equ. \textbf{265} (2018), no. 6, 2625--2657. \url{https://doi.org/10.1016/j.jde.2018.06.025}

\bibitem{Czarnecki.Rifford.TAMS.2006}
M. O. Czarnecki and L. Rifford, \emph{Approximation and regularization of Lipschitz functions: convergence of the gradients}, Trans. Amer. Math. Soc. \textbf{358} (2006), 4467--4520.
\url{https://www.ams.org/journals/tran/2006-358-10/S0002-9947-06-04103-1/}

\bibitem{Dal}
Ju. L. Dalecki\u{\i} and M. G. Kre\u{\i}n, \emph{Stability of Solutions of Differential Equations in Banach Space}, Transl. Math. Monogr. 43, Amer. Math. Soc., Providence, RI, 1974.



\bibitem{D.MN.2019}
D. Dragi\v{c}evi\'{c}, \emph{Admissibility and nonuniform polynomial dichotomies}, Math. Nachr. \textbf{293} (2019), 226--243.
\url{https://doi.org/10.1002/mana.201800291}

\bibitem{D.2020.CPAA}
D. Dragi\v{c}evi\'{c}, \emph{Admissibility and polynomial dichotomies for evolution families}, Commun. Pure Appl. Anal. \textbf{19} (2020), 1321--1336.
\url{https://www.aimsciences.org/article/doi/10.3934/cpaa.2020064}


\bibitem{DSS1}
D. Dragi\v{c}evi\'{c}, A. L. Sasu, and B. Sasu, \emph{On the asymptotic behavior of discrete dynamical systems—an ergodic theory approach}, J. Differ. Equ. \textbf{268} (2020), 4786--4829.
\url{https://doi.org/10.1016/j.jde.2019.10.037}

\bibitem{DSS2}
D. Dragi\v{c}evi\'{c}, A. L. Sasu, and B. Sasu, \emph{On stability of discrete dynamical systems: from global methods to ergodic theory approaches}, J. Dyn. Differ. Equ. \textbf{34} (2022), 1107–1137.
\url{https://doi.org/10.1007/s10884-020-09918-4}

\bibitem{Dragicevic_Zhang_Zhou_2023}
D. Dragi\v{c}evi\'{c}, W. Zhang, and L. Zhou, \emph{One admissible critical pair without Lyapunov norm implies a tempered exponential dichotomy for Met-systems}, Stochastics Dyn. \textbf{23} (2023), no. 7, 2350053. \url{https://doi.org/10.1142/S0219493723500533}

\bibitem{Dragicevic_Zhang_Zhou_2024}
D. Dragi\v{c}evi\'{c}, W. Zhang, and L. Zhou, \emph{Measurable weighted shadowing for random dynamical systems on Banach spaces}, J. Differ. Equ. \textbf{392} (2024), 364--386. \url{https://doi.org/10.1016/j.jde.2024.02.052}

\bibitem{Hen}
D. Henry, \emph{Geometric Theory of Semilinear Parabolic Equations}, Lecture Notes in Mathematics 840, Springer, Berlin-New York, 1981.

\bibitem{Huy}
N. T. Huy, \emph{Exponential dichotomy of evolution equations and admissibility of function spaces on a half-line}, J. Funct. Anal. \textbf{235} (2006), 330--354.
\url{https://doi.org/10.1016/j.jfa.2005.11.002}

\bibitem{Huy2}
N. T. Huy, \emph{Existence and robustness of exponential dichotomy of linear skew-product semiflows over semiflows}, J. Math. Anal. Appl. \textbf{333} (2007), 731--752.
\url{https://doi.org/10.1016/j.jmaa.2006.11.029}

\bibitem{LRS}
Y. Latushkin, T. Randolph, and R. Schnaubelt, \emph{Exponential dichotomy and mild solutions of nonautonomous equations in Banach spaces}, J. Dyn. Differ. Equ. \textbf{10} (1998), 489--510.
\url{https://doi.org/10.1023/A:1022609414870}

\bibitem{Li}
Ta Li, \emph{Die Stabilit\"{a}tsfrage bei Differenzengleichungen}, Acta Math. \textbf{63} (1934), 99--141.

\bibitem{Li_Li_Zuo_2023}
Y. Li, X. Li, and J. Zuo, \emph{Random attractors for non-autonomous stochastic wave equations with strong damping and additive noise on $\mathbb{R}^N$}, J. Appl. Anal. Comput. \textbf{13} (2023), no. 4, 1739--1765. \url{https://doi.org/10.11948/20220006}

\bibitem{Lian_Lu_2010}
Z. Lian and K. Lu, \emph{Lyapunov exponents and invariant manifolds for random dynamical systems in a Banach space}, Mem. Amer. Math. Soc. \textbf{206} (2010), no. 967.

\bibitem{MS1}
J. Massera and J. Sch\"affer, \emph{Linear differential equations and functional analysis, I}, Ann. of Math. \textbf{67} (1958), 517--573.
\url{https://www.jstor.org/stable/1969871}

\bibitem{MS2}
J. Massera and J. Sch\"affer, \emph{Linear Differential Equations and Function Spaces}, Pure Appl. Math. 21, Academic Press, New York-London, 1966.

\bibitem{Mather}
J. Mather, \emph{Characterization of Anosov diffeomorphisms}, Indag. Math. \textbf{30} (1968), 479--483.

\bibitem{MSS1}
M. Megan, B. Sasu, and A. L. Sasu, \emph{On nonuniform exponential dichotomy of evolution operators in Banach spaces}, Integral Equations Oper. Theory \textbf{44} (2002), 71--78.
\url{https://doi.org/10.1007/BF01197861}

\bibitem{MSS2}
M. Megan, A. L. Sasu, and B. Sasu, \emph{Discrete admissibility and exponential dichotomy for evolution families}, Discrete Contin. Dyn. Syst. \textbf{9} (2003), 383--397.
\url{https://www.aimsciences.org/article/doi/10.3934/dcds.2003.9.383}

\bibitem{MSS3}
M. Megan, A. L. Sasu, and B. Sasu, \emph{Perron Conditions for Pointwise and Global Exponential Dichotomy of Linear Skew-Product Flows}, Integral Equations Oper. Theory \textbf{50} (2004), 489--504.
\url{https://doi.org/10.1007/s00020-003-1234-z}

\bibitem{M}
J. S. Muldowney, \emph{Dichotomies and asymptotic behaviour for linear differential systems}, Trans. Amer. Math. Soc. \textbf{283} (1984), 465--484.
\url{https://www.ams.org/journals/tran/1984-283-02/S0002-9947-1984-0737880-1/}

\bibitem{Mi.Ra.Sc.1998}
N. Van Minh, F. R\"{a}binger, and R. Schnaubelt, \emph{Exponential stability, expansiveness, and exponential dichotomy of evolution equations on the half-line}, Integral Equations Oper. Theory \textbf{32} (1998), 332--353.
\url{https://doi.org/10.1007/BF01203774}

\bibitem{NP}
R. Naulin and M. Pinto, \emph{Roughness of $(h, k)$-dichotomies}, J. Differ. Equ. \textbf{118} (1995), 20--35.
\url{https://doi.org/10.1006/jdeq.1995.1065}

\bibitem{Per}
O. Perron, \emph{Die Stabilit\"{a}tsfrage bei Differentialgleichungen}, Math. Z. \textbf{32} (1930), 703--728.
\url{https://doi.org/10.1007/BF01194662}

\bibitem{PM}
P. Preda and M. Megan, \emph{Nonuniform dichotomy of evolutionary processes in Banach spaces}, Bull. Austral. Math. Soc. \textbf{27} (1983), 31--52.
\url{https://doi.org/10.1017/S0004972700011473}

\bibitem{P.P.C.JFA.2010}
C. Preda, P. Preda, and A. Craciunescu, \emph{Criteria for detecting exponential dichotomies in the asymptotic behavior of solutions of variational equations}, J. Funct. Anal. \textbf{258} (2010), 729--757.
\url{https://doi.org/10.1016/j.jfa.2009.09.002}

\bibitem{S}
A. L. Sasu, \emph{Exponential dichotomy and dichotomy radius for difference equations}, J. Math. Anal. Appl. \textbf{344} (2008), 906--920.
\url{https://doi.org/10.1016/j.jmaa.2008.03.019}

\bibitem{SBS}
A. L. Sasu, M. G. Babu\c{t}ia, and B. Sasu, \emph{Admissibility and nonuniform exponential dichotomy on the half-line}, Bull. Sci. Math. \textbf{137} (2013), 466–484.
\url{https://doi.org/10.1016/j.bulsci.2012.11.002}

\bibitem{SS-1}
A. L. Sasu and B. Sasu, \emph{Input-output conditions for the asymptotic behavior of linear skew-product flows and applications}, Commun. Pure Appl. Anal. \textbf{5} (2006), 551--569.
\url{https://www.aimsciences.org/article/doi/10.3934/cpaa.2006.5.551}
\bibitem{SS}
A. L. Sasu and B. Sasu, \emph{Admissibility criteria for nonuniform dichotomic behavior of nonautonomous systems on the whole line}, Appl. Math. Comput. \textbf{378} (2020), 125167.
\url{https://doi.org/10.1016/j.amc.2020.125167}


\bibitem{SS2}
A. L. Sasu and B. Sasu, \emph{Admissibility and exponential trichotomy of dynamical systems described by skew-product flows}, J. Differ. Equ. \textbf{260} (2016), 1656--1689.
\url{https://doi.org/10.1016/j.jde.2015.09.042}

\bibitem{SS1}
B. Sasu and A. L. Sasu, \emph{On the dichotomic behavior of discrete dynamical systems on the half-line}, Discrete Contin. Dyn. Syst. \textbf{33} (2013), 3057--3084.
\url{https://www.aimsciences.org/article/doi/10.3934/dcds.2013.33.3057}


\bibitem{Sousa_Tese_2023}
A.N. Oliveira-Sousa, \emph{Robustness of nonuniform and random exponential dichotomies with applications to differential equations}, Ph.D. Thesis, Graduate Program in Mathematics (PPG-Mat), 2023.

\bibitem{Silva_2021}
C. M. Silva, \emph{Admissibility and generalized nonuniform dichotomies for discrete dynamics}, Commun. Pure Appl. Anal. \textbf{20} (2021), 3419--3443.
\url{https://www.aimsciences.org/article/doi/10.3934/cpaa.2021112}

\bibitem{Wang_2012_JDE}
B. Wang, \emph{Sufficient and necessary criteria for existence of pullback attractors for non-compact random dynamical systems}, J. Differ. Equ. \textbf{253} (2012), 1265--1301.
\url{https://doi.org/10.1016/j.jde.2012.05.015}

\bibitem{Wang_2014_SD}
B. Wang, \emph{Existence and upper semicontinuity of attractors for stochastic equations with deterministic non-autonomous terms}, Stochastics Dyn. \textbf{14} (2014), no. 4, 1450009.
\url{https://doi.org/10.1142/S0219493714500099}

\bibitem{Wang_2014_NA}
B. Wang, \emph{Existence, stability and bifurcation of random complete and periodic solutions of stochastic parabolic equations}, Nonlinear Anal. \textbf{103} (2014), 9--34. \url{https://doi.org/10.1016/j.na.2014.02.019}

\bibitem{WX}
M. Wu and Y. Xia, \emph{Admissibility and nonuniform exponential dichotomies for difference equations without bounded growth or Lyapunov norms}, Proc. Amer. Math. Soc. \textbf{151} (2023), 4389--4403.
\url{https://doi.org/10.1090/proc/16485}

\bibitem{Yao_Zhang_2020}
H. Yao and J. Zhang, \emph{Random attractors for non-autonomous stochastic wave equations with nonlinear damping and white noise}, Adv. Differ. Equ. (2020).
\url{https://doi.org/10.1186/s13662-020-02664-3}
\bibitem{Zhang_Caraballo_Yang_2024}
Q. Zhang, T. Caraballo, and S. Yang, \emph{Stability of regular attractors for non-autonomous random dynamical systems and applications to stochastic Newton--Boussinesq equations with delays}, Phys. D \textbf{458} (2024), 134012. \url{https://doi.org/10.1016/j.physd.2023.134012}

\bibitem{Zhou_Lu_Zhang_2013}
L. Zhou, K. Lu, and W. Zhang, \emph{Roughness of tempered exponential dichotomies for infinite-dimensional random difference equations}, J. Differ. Equ. \textbf{254} (2013), 4024--4046. \url{https://doi.org/10.1016/j.jde.2013.02.007}

\bibitem{ZLZ}
L. Zhou, K. Lu, and W. Zhang, \emph{Equivalences between nonuniform exponential dichotomy and admissibility}, J. Differ. Equ. \textbf{262} (2017), 682--747.
\url{https://doi.org/10.1016/j.jde.2016.09.035}

\bibitem{ZZ}
L. Zhou and W. Zhang, \emph{Admissibility and roughness of nonuniform exponential dichotomies for difference equations}, J. Funct. Anal. \textbf{271} (2016), 1087--1129.
\url{https://doi.org/10.1016/j.jfa.2016.06.005}

 	\end{thebibliography}
\end{document}